\documentclass[11pt,a4paper,twoside]{article}

\usepackage{iftex}
\usepackage[T1]{fontenc}
\ifPDFTeX
  \usepackage[utf8]{inputenc}
\fi
\usepackage{lmodern}
\usepackage{indentfirst}
\usepackage{amsmath,amsthm,amssymb,mathtools}

\usepackage{geometry}
\usepackage{xcolor}
\usepackage{microtype}
\usepackage{fancyhdr}

\usepackage[
  colorlinks=true,
  linkcolor=blue!45!black,
  citecolor=blue!45!black,
  urlcolor=blue!45!black
]{hyperref}

\allowdisplaybreaks[3]
\newtheoremstyle{paperplain}{0.65em}{0.65em}{\itshape}{}%
  {\bfseries}{.}{0.5em}{}
\theoremstyle{paperplain}
\newtheorem{theorem}{Theorem}[section]
\newtheorem{proposition}[theorem]{Proposition}
\newtheorem{lemma}[theorem]{Lemma}

\newtheoremstyle{paperremark}{0.65em}{0.65em}{\normalfont}{}%
  {\bfseries}{.}{0.5em}{}
\theoremstyle{paperremark}
\newtheorem{remark}[theorem]{Remark}

\newcommand{\runningtitle}{}
\let\originaltitle\title
\renewcommand{\title}[2][]{%
  \originaltitle{#2}%
  \if\relax\detokenize{#1}\relax
    \gdef\runningtitle{#2}%
  \else
    \gdef\runningtitle{#1}%
  \fi
}

\title[Graphical curvature quotient equations]{\textbf{Interior curvature estimates for graphical curvature quotient equations}}
\author{%
  \textbf{Fei Han}
  \and
  \textbf{Yan Ma}\thanks{Corresponding author.}
}

\date{}
\makeatletter
\newcommand{\printfirstpagebottommatter}{%
  \begingroup
  \renewcommand{\thefootnote}{}%
  \renewcommand{\@makefntext}[1]{\noindent##1}%
  \footnotetext[0]{%
    \textit{Keywords.}
    Curvature quotient equation; interior curvature estimate;
    doubling method; two-surface comparison; Pogorelov estimate.
    \par\smallskip
    \textit{2020 Mathematics Subject Classification.}
    35J60, 35B45, 53C42.%
  }%
  \endgroup
}
\makeatother
\newcommand{\printauthorinformation}{%
  \par\bigskip
  \begingroup
  \small
  \setlength{\parindent}{0pt}%

  \noindent
  \textsc{Fei Han}: \href{mailto:137823121@qq.com}{137823121@qq.com}\par
  School of Mathematics Sciences, Xinjiang Normal University,
  No.~102, Xinyi Road, Shayibake District, Urumqi, 830054,
  Xinjiang Uygur Autonomous Region, China

  \par\medskip

  \noindent
  \textsc{Yan Ma}: \href{mailto:Mayan94743369@163.com}{Mayan94743369@163.com}\par
  Department of Mathematics, College of Science, Shihezi University, 
  No.~221, Beisi Road, Shihezi, 832003,
  Xinjiang Uygur Autonomous Region, China\par
  \textit{Corresponding author.}

  \endgroup
}

\numberwithin{equation}{section}

\hypersetup{pdftitle={Interior curvature estimates for graphical curvature quotient equations},
pdfauthor={Fei Han and Yan Ma}}
\begin{document}
\maketitle
\printfirstpagebottommatter
\begin{abstract}
We prove interior curvature estimates for admissible graphical solutions of curvature quotient equations in the range $3\leq k<n$ on the full G{\aa}rding cone $\Gamma_k$. No convexity, semiconvexity, or $\Gamma_{k+1}$-admissibility assumption is required. The proof combines a new quantitative compression inequality, a two-surface comparison and doubling argument, and a Pogorelov estimate.
\end{abstract}

\section{Introduction}\label{sec:introduction}

Let $X(x)=(x,u(x))$ be a hypersurface graph in $\mathbb R^{n+1}$.
We consider the curvature quotient equation
\begin{equation}\label{1.1}
    \frac{\sigma_k(\kappa[u])}{\sigma_{k-2}(\kappa[u])}=1,
    \qquad
    \kappa[u]\in\Gamma_k,
    \qquad
    3\leq k<n,
\end{equation}
where $\kappa[u]$ denotes the principal curvatures with respect to the
downward unit normal, $\sigma_j$ is the unnormalized $j$th elementary
symmetric polynomial, $\sigma_0=1$, and
\[
    \Gamma_k
    :=
    \left\{
        \kappa\in\mathbb R^n:
        \sigma_j(\kappa)>0,\ 1\leq j\leq k
    \right\}
\]
is the G{\aa}rding cone. A solution is called admissible if $\kappa[u]\in\Gamma_k$. We set
\[
    F(\kappa)
    :=
    \left(
        \frac{\sigma_k(\kappa)}{\sigma_{k-2}(\kappa)}
    \right)^{\frac12}.
\]
The operator $F$ is elliptic and concave on $\Gamma_k$ and is
homogeneous of degree one; see \cite{CNSHessian,LTineq}. Under a fixed
gradient bound, a curvature bound therefore gives uniform ellipticity
for the graphical equation.

Curvature quotient equations arise naturally in the study of fully
nonlinear curvature problems and belong to the same broad family as
Hessian quotient equations. The boundary value theory for nonlinear
Hessian and curvature equations was developed by Caffarelli, Nirenberg,
and Spruck \cite{CNSHessian,CNS}. For curvature quotient equations,
Lin and Trudinger \cite{LT} treated homogeneous boundary data, while
Ivochkina, Lin, and Trudinger \cite{ILT} considered general boundary
values. Ivochkina \cite{IvoNormal} obtained local estimates for the
boundary normal second derivative. These results provide the basic
Dirichlet theory, but purely interior second-derivative estimates are
more delicate because they must be independent of second derivatives of
the boundary data. Sheng, Urbas, and Wang \cite{ShengUrbasWang}
established interior curvature bounds for admissible solutions of a class
of curvature equations subject to affine Dirichlet data, extending
Pogorelov-type estimates to hypersurfaces. Their boundary-dependent
setting is an important precursor to the purely interior problem here.

The purely interior theory for the quadratic Hessian equation
$\sigma_2(D^2u)=1$ provides a useful point of comparison. Warren and
Yuan \cite{WarrenYuan} established the three-dimensional estimate
through special Lagrangian geometry; Qiu \cite{QiuHessian} treated
more general positive right-hand sides in dimension three and developed
a maximum-principle doubling argument. Shankar and Yuan
\cite{ShankarYuan} resolved the four-dimensional constant equation,
and Li and Wu \cite{LiWuHessian} recently obtained the estimate in
every dimension under natural $\Gamma_2$-admissibility. For Hessian
quotients, interior estimates under convexity were established in
\cite{Lu,LuTsai,LiWuQuotient}, and related results under
semiconvexity in \cite{MeiYan,DongZhang}. Qiu and Yan \cite{QiuYanHessian} gave a complete solution to the interior estimate problem for Hessian quotient equations in all dimensions under natural admissibility, for constant Hessian quotients with degree gap one or two. Nevertheless, Hessian quotient equations
and graphical curvature quotient equations have different geometric
structures. For a Hessian equation, the nonlinear operator
acts directly on the eigenvalues of $D^2u$, whereas for a graphical
curvature equation the principal curvatures are the eigenvalues of the
shape operator, which depends simultaneously on $D^2u$, $Du$, and the
induced metric. Therefore, the interior Hessian estimates for quotient
equations do not directly imply interior curvature estimates for
\eqref{1.1}.

For graphical curvature quotients, Liu \cite[Theorem~1.1]{Liu} proved
an interior estimate for
$\frac{\sigma_n}{\sigma_{n-2}}$ on convex graphs. Wang
\cite[Theorem~1.2 and Remark~1.5]{Wang} treated
$\frac{\sigma_k}{\sigma_{k-2}}$ under strict convexity and showed that
the argument also applies under the stronger condition
$\kappa[u]\in\Gamma_{k+1}$. When $(n,k)=(4,3)$, equation \eqref{1.1}
is equivalent on $\Gamma_3$ to the critical-phase special Lagrangian
curvature equation
\[
    \sum_{i=1}^4\arctan\kappa_i=\pi,
\]
and the corresponding estimate follows from Qiu and Zhou
\cite{QiuZhou}.

The closest geometric precedent is the graphical scalar curvature
equation. Guan and Qiu \cite{GuanQiu} obtained interior curvature
estimates under an additional lower bound for $\sigma_3(\kappa)$, and
Qiu \cite{Qiu3} removed this condition in dimension three. Most recently, Qiu and Yan \cite{QiuYan} established the purely
interior estimate for $\sigma_2(\kappa[u])=1$ on the full
$\Gamma_2$ cone in every dimension, without convexity, semiconvexity,
or a higher-cone assumption. Their work introduced a geometric
comparison between distinct points on two admissible graphs: a
Jacobi inequality for their distance produces an interior separation
seed, a modified doubling maximum principle propagates the seed,
and a genuinely two-surface Pogorelov estimate uses second-order
information at both points. In particular, their two-point framework
overcomes the mismatch of the induced metrics and normals that
obstructs a direct comparison of the vertical gap at one base point.
This new mechanism is the foundation of our approach to intermediate
graphical curvature quotient equations. Two additional difficulties arise that
are absent in the scalar-curvature case. First, the matrix produced by
the two-point second variation is a rank-one compression of the
curvature matrix. For $\sigma_2$, the required positivity can be read
directly from the scalar-curvature structure, whereas for
$\frac{\sigma_k}{\sigma_{k-2}}$ one must prove that the compressed matrix
remains in $\Gamma_k$ and retains a fixed positive portion of the
quotient. Second, the propagation argument requires quantitative
ellipticity in a distinguished curvature direction. This is not
available directly from the higher quotient.  These two ingredients allow the two-surface method to be carried out
under the natural admissibility condition $\kappa[u]\in\Gamma_k$.

Our main result is the following quantitative interior estimate.

\begin{theorem}\label{thm:main}
Let $3\leq k<n$, $R>0$, and $K\geq1$. Suppose that
$u\in C^4(B_R)$ satisfies
\begin{equation}\label{1.2}
\begin{gathered}
    \kappa[u]\in\Gamma_k,
    \qquad
    \frac{\sigma_k(\kappa[u])}{\sigma_{k-2}(\kappa[u])}=1
    \quad\text{in }B_R,
    \\
    \|u\|_{L^\infty(B_R)}
    +\|Du\|_{L^\infty(B_R)}
    \leq K.
\end{gathered}
\end{equation}
Then
\begin{equation}\label{1.3}
    \sup_{B_{\frac{R}{2}}}|\kappa[u]|
    +
    \sup_{B_{\frac{R}{2}}}|D^2u|
    \leq C(n,k,R,K).
\end{equation}
\end{theorem}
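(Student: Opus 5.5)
Following the scheme of Qiu and Yan \cite{QiuYan} for $\sigma_2(\kappa)=1$, the plan has three stages: a Pogorelov estimate, a separation seed for two admissible graphs, and a doubling maximum principle that propagates the seed. The first reduction is that, by concavity and ellipticity of $F$ together with $|Du|\le K$, a curvature bound gives uniform ellipticity and hence the Hessian bound in \eqref{1.3}. It therefore suffices to bound $\kappa_{\max}$ on $B_{R/2}$, and by scaling we may take $R=2$.

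\emph{Step 1: Pogorelov estimate.} I would first prove a Pogorelov-type estimate with a cutoff built from a strictly convex (or affine-perturbed) comparison function $\ell$ lying above $u$ on a subdomain $\Omega=\{u<\ell\}$ compactly contained in $B_2$. The test function is $(\ell-u)^\beta\,\kappa_{\max}\,e^{\phi(|Du|^2)+a\langle X,\nu\rangle}$, which is bounded by $C(n,k,K)$ on $\Omega$. The third-order terms are controlled by the concavity of $F$. Here one needs a quantitative ellipticity lower bound $F^{11}\ge c\sum_i F^{ii}$ in the direction of the largest curvature whenever $\kappa_1$ is large. For $\sigma_k/\sigma_{k-2}$ in $\Gamma_k$ this fails in general. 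I would instead prove it via the earlier ``distinguished direction'' lemma: on $\Gamma_k$ with $\sigma_k/\sigma_{k-2}=1$, one has $\partial_{\kappa_1}(\sigma_k/\sigma_{k-2})\ge c(n,k)\,\sigma_{k-1}/\sigma_{k-2}\cdot\kappa_1^{-1}$ up to harmless factors.

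\emph{Step 2: separation seed and propagation.} The Pogorelov estimate only helps once we know the graph separates from an affine function by a definite amount. I would compare $u$ with a vertical translate $u+t$, or with a tilted copy, using the two-point distance $d(x,y)=|X(x)-\tilde X(y)|$. Computing the second variation of $d$ at an interior minimum pair produces, in the tangential directions orthogonal to $X-\tilde X$, a rank-one compression $A-\frac{Ae\otimes Ae}{\langle Ae,e\rangle+\cdots}$ of the curvature matrix. The new compression inequality should show that this compressed matrix stays in $\Gamma_k$ and retains a fixed fraction $\theta(n,k)$ of the quotient. I would prove this by expanding $\sigma_j$ of the rank-one perturbation as $\sigma_j(A)-\sigma_{j-1}(A|e)\cdot(\text{coefficient})$, and then using Newton--Maclaurin plus $\sigma_{k-1}(A|i)\ge c\,\sigma_{k-1}(A)$ for non-maximal $i$. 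This gives a Jacobi inequality $L d\le C$, which yields an interior separation seed. A doubling maximum principle on $\Phi(x,y)=\eta(x)\eta(y)\,(\ldots)$ then propagates the seed to a uniform separation on $B_{3/2}$, using the second-order information at both points.

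\emph{Conclusion and main obstacle.} With uniform separation in hand, Step 1 applies on a fixed sublevel set containing $B_1$, which gives \eqref{1.3}. I expect the main obstacle to be the compression inequality in Step 2. For $k=2$ positivity is automatic, but here we must control all $\sigma_j$ for $j\le k$ of the compressed matrix simultaneously, and keep a fixed portion of $\sigma_k/\sigma_{k-2}$. My first attempt would be to diagonalize, place $e$ generically, and argue by concavity of $F$ along the segment between $A$ and its compression, using $\Gamma_k$-convexity.
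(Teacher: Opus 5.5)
The three-stage architecture matches the paper's, but there is a genuine gap. Your scheme never introduces the object the whole argument rests on: a second admissible graph $v$ solving the Dirichlet problem $F(\kappa[v])=s_0$ in $B_{R_0}$, $v=u$ on $\partial B_{R_0}$, at a level $s_0=\theta/4$ strictly below the level $1$ of $u$. The paper has to prove solvability of this problem on balls of radius at most $R_*(n,k,K)$ and then localize. A vertical translate or tilted copy of $u$ cannot replace $v$, for two reasons.

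First, the distance between the graphs of $u$ and $u+t$ is trivially comparable to $t$ and never vanishes, so it yields no cutoff. The paper's cutoff is $\rho=v-u$, which vanishes on $\partial B_{R_0}$ and is bounded below on $B_{R/2}$ by the separation theorem. Your Step 1 instead uses $\ell-u$ on $\{u<\ell\}\Subset B_2$. For a merely $\Gamma_k$-admissible, non-convex graph, nothing guarantees that such a sublevel set exists; supplying a substitute is exactly the job of the separation estimate.

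Second, the two-point second variation at a closest pair gives, in the support sense, $\Delta_F(-\log d)\ge(1-a)|\nabla\log d|_F^2+(\theta-c\,s)/d$. Here $s$ is the $F$-value of the upper graph, $c=\langle N_u,N_v\rangle\le1$, and $\theta<1$. With $s=1$ the $1/d$ term is lost: it is negative when $c$ is near $1$. An inequality like $Ld\le C$ then gives no lower bound on $d$ anywhere. The seed needs a strictly positive $\delta/d$ term. With the divergence-free tensor $T_v=2s\sigma_{k-2}F^{ij}$ and $\sigma_{k-2}\ge c_*$ on the level set, one gets $\Delta_{T_v}(d^p)\le-p\delta_0d^{p-1}$. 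Comparison with the torsion function of $T_v$ on $X_v(B_{R_0/16})$ then bounds $\max d$ from below.

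Likewise, the Pogorelov step must be genuinely two-surface. The paper maximizes $e^{G(x)}\eta(x,y)^p$ on the product, with $\eta=v(y)-u(x)-\tfrac{17r}{32}-\tfrac{8K^2}{r}|y-x|^2$. It pairs the two tangent spaces by an isometry and uses the admissibility of $v$ only through $\sum_iF_ih_v(\widetilde e_i,\widetilde e_i)\ge F(\kappa[v])>0$.

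Several auxiliary claims also fail as stated.

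\emph{Directional bound.} Your inequality $\partial_{\kappa_1}(\sigma_k/\sigma_{k-2})\ge c\,\kappa_1^{-1}\sigma_{k-1}/\sigma_{k-2}$ is false by a full factor of $\kappa_1$. At $(L,1,\ldots,1)$, rescaled to the level set, the left side is of order $L^{-2}$ and the right side of order $L^{-1}$. What is true is a statement for the normalized coefficients built from $(\kappa[v]/s_0,1)$: $a_i(1+\kappa_i^2)\ge c$, and $a_i\ge c$ when $\kappa_i\le0$. The paper uses this in the propagation step. Its Pogorelov step needs no directional ellipticity at all: $-\log(w-a_*)$ produces $c_a\mathcal E$, and the bad directions are bounded by $F_i\kappa_i^2\le\mathcal E$.

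\emph{Compression.} Concavity of $F$ along a segment cannot show that the endpoint lies in $\Gamma_k$. The matrix that actually arises is $P\,M(I_n-M)^{-1}P+a(1-c^2)e_1\otimes e_1$, with $M=dA_u$ and $P=\operatorname{diag}(c,1,\ldots,1)$. The proof rests on three ingredients:
\begin{itemize}
\item the identity $\sigma_r(D')=\sum_jw_j\sigma_r(\tau|j)$ for the principal submatrix;
\item the flow $x_i/(1-tx_i)$, which yields $\sigma_k(\tau|j)\ge-\tfrac{n-k}{n}\sigma_{k-1}(\tau|j)$;
\item linearity of $\sigma_r(B)$ in $c^2$.
\end{itemize}

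\emph{Scaling.} You cannot normalize to $R=2$: $F$ is $1$-homogeneous, so dilation changes the prescribed value of the quotient.
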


No convexity, semiconvexity, or $\Gamma_{k+1}$ assumption is imposed.
The endpoint $k=n$ under convexity was treated by Liu \cite{Liu}.
We retain the radius $R$, since dilation changes the prescribed value
of the curvature quotient. The degree-gap range is sharp in the sense
that a purely interior estimate under natural admissibility cannot
hold for every quotient with a larger gap: Urbas' nonclassical
$\sigma_3$-curvature examples provide the first gap-three obstruction
\cite{UrbasNonclassical}. 

Our proof follows the geometric maximum principle mechanism of Qiu and Yan
\cite{QiuYan}. On a small ball we solve $F(\kappa[v])=s_0<1$ with
$v=u$ on the boundary and compare the two graphs through the one-sided
Euclidean distance $d$ from the comparison graph to the closed
hypograph of the original graph. The two-point second variation
produces a rank-one compression of the curvature matrix; a new
quantitative compression inequality keeps it in $\Gamma_k$ and
preserves a definite portion of the quotient. This yields a Jacobi
inequality for $-\log d$ with a positive $1/d$ term, supplying a
separation seed. After normalizing $\kappa[v]/s_0$, the quotient
identity gives the directional ellipticity needed to propagate this
seed by a modified doubling maximum principle. Finally, a product-space
Pogorelov maximum principle pairs the two tangent spaces isometrically
and controls the unfavorable angle and curvature terms. Uniform
separation then turns the weighted curvature bound into the asserted
interior estimate.

The paper is organized as follows. Section~\ref{geometry} fixes the
geometric conventions and records the basic identities used throughout
the paper. Section~\ref{sec:separation} introduces the comparison
graph, proves the interior gradient estimate, and establishes the
uniform separation estimate. Section~\ref{pog-section} proves the
two-surface Pogorelov estimate. Finally, Section~\ref{sec:main-proof}
constructs the comparison graph on a sufficiently small ball and
completes the proof of Theorem~\ref{thm:main}.

\section{Preliminaries}\label{geometry}
We first fix the geometric notation used throughout the paper.
Let $\Sigma$ be the graph of a smooth function $u$ over a domain
$\Omega\subset\mathbb{R}^n$. We write
\[
    X(x)=(x,u(x)),\qquad
    W=\sqrt{1+|Du|^2},\qquad
    \nu=\frac{(Du,-1)}{W},
\]
where $\nu$ is the downward unit normal and $D$ denotes
differentiation in the base variables. In graph coordinates,
\[
    g_{ij}=\delta_{ij}+D_i u\,D_j u,
    \qquad
    h_{ij}=\frac{D_{ij}u}{W}.
\]
We use the convention
$\overline{\nabla}_{X_i}\nu=h_i{}^jX_j$, where
$h_i{}^j=g^{j\ell}h_{i\ell}$.

Covariant derivatives with respect to the induced metric are denoted
by subscripts. Thus $h_{ijk}=\nabla_kh_{ij}$, and the Codazzi
equations give $h_{ijk}=h_{ikj}$. We identify $u$ with the height
function on $\Sigma$. At a fixed point we choose a principal
orthonormal frame so that
\[
    h_{ij}=\kappa_i\delta_{ij},
    \qquad
    \kappa_1\geq\cdots\geq\kappa_n.
\]

Let $F=F(\kappa)$ be a positive, symmetric, elliptic, concave
function, homogeneous of degree one. We write
\[
    F^{ij}=\frac{\partial F}{\partial h_{ij}},
    \qquad
    F_i=\frac{\partial F}{\partial\kappa_i},
\]
with the metric held fixed. In a principal frame,
$F^{ij}=F_i\delta_{ij}$. We use the notation
\[
    \Delta_F\phi=F^{ij}\phi_{ij},
    \qquad
    |\nabla\phi|_F^2=F^{ij}\phi_i\phi_j,
    \qquad
    \mathcal E=F^{ij}h_i{}^kh_{kj}.
\]

Let $E_{n+1}=(0,\ldots,0,-1)$. Then
$\langle\nu,E_{n+1}\rangle=W^{-1}$ and
$u_{ij}=W^{-1}h_{ij}$. If $F(\kappa)=s$ is constant, then
\begin{equation}\label{2.1}
\begin{aligned}
    \Delta_F\!\left(\frac1W\right)
    &=-\frac{\mathcal E}{W},
    &
    \left(\frac1W\right)_i
    &=-\kappa_i u_i,
    \\
    \Delta_Fu
    &=\frac{s}{W},
    &
    \sum_iF_i\kappa_i
    &=s.
\end{aligned}
\end{equation}
The second identity is written in a principal frame, with no
summation over $i$. Differentiating the equation and using Codazzi,
we also have
\[
    \sum_iF_i h_{iij}=0,
    \qquad
    1\leq j\leq n.
\]

For $1\leq r\leq n$, let
\[
    T_{r-1}^{ij}
    =
    \frac{\partial\sigma_r}{\partial h_{ij}}
\]
be the Newton tensor. In Euclidean ambient space,
the Newton tensors are divergence-free and satisfy
\begin{equation}\label{2.2}
\begin{aligned}
    \nabla_iT_{r-1}^{ij}
    &=0,
    &
    T_{r-1}^{ij}g_{ij}
    &=(n-r+1)\sigma_{r-1},
    \\
    T_{r-1}^{ij}h_{ij}
    &=r\sigma_r,
    &
    T_{r-1}^{ij}u_{ij}
    &=\frac{r\sigma_r}{W}.
\end{aligned}
\end{equation}
If $\kappa\in\Gamma_k$, then $T_{r-1}$ is positive definite for
$1\leq r\leq k$.

For two graph functions $u$ and $v$, the same notation is used with
subscripts $u$ and $v$ for the corresponding graph quantities. In
particular, $X_u,X_v$ denote the graph parametrizations and
$\Sigma_u,\Sigma_v$ their images. When the two graphs are introduced
simultaneously below, their downward unit normals will be denoted by
$N_u$ and $N_v$. Unless $D$ is explicitly used, derivatives on each
graph are taken with respect to its induced metric.

\section{Uniform separation}\label{sec:separation}
Throughout this section, set $R_0=\frac{3R}{4}$ and let $u$
satisfy \eqref{1.2}. Let
$v\in C^4(B_{R_0})\cap C^0(\overline{B}_{R_0})$ be an admissible
comparison solution of
\begin{equation}\label{3.1}
    \begin{cases}
        F(\kappa[v])=s_0
            & \text{in }B_{R_0},\\
        \kappa[v]\in\Gamma_k
            & \text{in }B_{R_0},\\
        v=u
            & \text{on }\partial B_{R_0}.
    \end{cases}
\end{equation}
Here $s_0=s_0(n,k)>0$ is the fixed constant chosen in
\eqref{3.8}. For simplicity, we write $s=s_0$ below.

The comparison principle gives $v>u$ in $B_{R_0}$.
Moreover, admissibility implies that the mean curvature of
$\Sigma_v$ is positive, so the maximum principle yields
\[
    v\leq\max_{\partial B_{R_0}}u
    \qquad\text{in }B_{R_0}.
\]
Consequently,
\[
    u<v\leq\max_{\partial B_{R_0}}u
    \qquad\text{in }B_{R_0},
\]
and
\[
    \operatorname{osc}_{B_{R_0}}v
    \leq
    \operatorname{osc}_{B_{R_0}}u
    \leq 2K.
\]

Before proving the separation estimate, we fix the notation for the two
graphs. Write
\[
    X_u(x)=(x,u(x)),\qquad X_v(x)=(x,v(x)),
    \qquad \Sigma_u=X_u(B_{R_0}),\qquad \Sigma_v=X_v(B_{R_0}),
\]
and set
\[
    W_u=\sqrt{1+|Du|^2},\qquad
    W_v=\sqrt{1+|Dv|^2}.
\]
The downward unit normals to $\Sigma_u$ and $\Sigma_v$ are denoted by
$N_u$ and $N_v$, respectively, and are given by
\begin{equation}\label{eq:section3-normals}
    N_u=\frac{(Du,-1)}{W_u},\qquad
    N_v=\frac{(Dv,-1)}{W_v}.
\end{equation}
Unless otherwise indicated, covariant derivatives on each graph are
taken with respect to its induced metric.

\subsection{An interior gradient estimate}\label{gradient-section}
We shall need an interior gradient estimate for the comparison graph.
Here and below, unless otherwise stated, $F$ is the quotient operator
in the introduction. We include a proof using an exponential cutoff,
as in the interior gradient method of \cite{Korevaar}. The estimate
depends on the oscillation, rather than the boundary slope.

\begin{lemma}\label{gradient}
Let $3\leq k\leq n$, and let $v\in C^3(B_r)$ be an admissible
solution of $F(\kappa[v])=s>0$. If
$\operatorname{osc}_{B_r}v<\infty$, then
\[
    |Dv(0)|\leq C\bigl(n,k,s,r,\operatorname{osc}_{B_r}v\bigr).
\]
\end{lemma}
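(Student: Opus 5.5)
We follow Korevaar's exponential-cutoff method. After subtracting a constant we may assume $\inf_{B_r}v=0$, so that $0\le v\le M:=\operatorname{osc}_{B_r}v$. On $B_r$ consider the test function
\[
    \Phi=\eta\,\log W,\qquad \eta=e^{A\zeta}-1,\qquad
    \zeta=\Bigl(1-\frac{|x|^2}{r^2}-\frac{v}{2M}\Bigr)^{+},
\]
with $A$ large to be chosen. Then $\eta\le e^{A}$, $\eta$ vanishes near $\partial B_r$, and $\zeta(0)\ge\frac12$. Hence a bound $\Phi\le C$ at an interior maximum point $x_0$ gives $\log W(0)\le C/(e^{A/2}-1)$, which is the claim. (One could use $W$ instead of $\log W$; with $\log W$ the gradient terms combine cleanly.)

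At $x_0$ we rotate to a principal frame and work with $\Delta_F$ on $\Sigma_v$. By \eqref{2.1} with $\phi=\log W=-\log(1/W)$,
\[
    \Delta_F\log W=\mathcal E-|\nabla\log W|_F^2 .
\]
The first-order condition reads $\eta\,\nabla\log W=-\log W\,\nabla\eta$. Substituting this into $0\ge\Delta_F\Phi$ and using $|\nabla\log W|_F^2=\log^2W\,|\nabla\eta|_F^2/\eta^2$ yields
\[
    0\ge \eta\,\mathcal E+\log W\Bigl(\Delta_F\eta-2\frac{|\nabla\eta|_F^2}{\eta}\Bigr)-\eta\frac{\log^2W}{\eta^2}|\nabla\eta|_F^2 .
\]
A cleaner route is to work directly with $\eta W$, using $\Delta_F W=W\mathcal E+2W^{-1}|\nabla W|_F^2$. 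In either case the key quantities are $\Delta_F\eta$ and $|\nabla\eta|_F^2$. We have
\[
    \Delta_F\eta=Ae^{A\zeta}\bigl(\Delta_F\zeta+A|\nabla\zeta|_F^2\bigr),
    \qquad
    \Delta_F v=s/W
\]
by \eqref{2.1}. Moreover $|x|^2$ is a restriction of an ambient function, so $|\Delta_F|x|^2|\le C\sum F_i(1+|\kappa_i|)$. Its $\kappa$-part is controlled by $F^{ij}h_{ij}=s$ together with the fact that $\nabla^2$ of an ambient quadratic equals $2\,\mathrm{tr}_F(\text{tangential projection})-2\langle X,\nu\rangle\cdot$(second fundamental form term). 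Hence $\Delta_F\zeta\ge -C\sum F_i -C$.

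The key positive term is $A^2e^{A\zeta}|\nabla\zeta|_F^2$. When $W$ is large, the gradient $\nabla v$ on $\Sigma_v$ has length $|Dv|/W\to1$ and points almost in the steep direction. Meanwhile $\nabla|x|^2$ is bounded, so $|\nabla\zeta|^2\ge 1/(8M^2)$, say. The first-order condition shows that $\nabla\log W$ is parallel to $\nabla\zeta$. Since $(1/W)_i=-\kappa_iv_i$, this gives information about $\kappa$ along $\nabla v$, and we expect it to force $F$-weight in that direction.

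The main obstacle is obtaining the lower bound $F^{ij}\zeta_i\zeta_j\ge c\sum_iF_i$, or at least domination of the bad term $\sum F_i$ by good terms. For quotient operators, $F_i$ can degenerate in the direction of $\nabla v$. We plan the standard case split used for $\sigma_k$ gradient estimates. \emph{Case 1:} the unit vector $e=\nabla v/|\nabla v|$ has $F^{ee}\ge c(n,k)\sum F_i$. Then choosing $A$ large absorbs $-C\sum F_i$, while $\eta\mathcal E\ge0$ and the remaining terms are lower order, so $W(x_0)$ is bounded. \emph{Case 2:} otherwise, the critical equation gives $\kappa_e=-(\log W)_e/(v_e)\cdot$(positive factor), which is large negative of size $\sim A\log W/\eta$. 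For $\kappa\in\Gamma_k$ with a very negative entry, the other entries must be large, and then $\mathcal E\ge F_e\kappa_e^2$ is large. We would use the quotient fact $F_i\ge c(n,k)\sum F_j$ for indices with $\kappa_i<0$ (a known inequality for $\sigma_k/\sigma_{k-2}$ on $\Gamma_k$, valid for $k\ge2$), which in fact puts us back in Case 1 up to the tilt of $e$ against the principal frame. Controlling that tilt, i.e.\ making $e$ nearly an eigen-direction with negative eigenvalue, is where we expect the real work. Finally, $\sum F_i\ge c(n,k)>0$ by concavity and homogeneity ($F(\kappa)\le F(\mathbf 1)^{-1}\dots$), so the constants are uniform, and the resulting bound depends only on $n,k,s,r,M$.
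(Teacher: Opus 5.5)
\textbf{There is a genuine gap, and it is exactly the step you defer.} You never show that, at the maximum point and for $W$ large, $|\nabla\zeta|_F^2\geq c\sum_iF_i$. The case split along $e=\nabla v/|\nabla v|$ cannot produce this bound.

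\emph{Case~1 fails.} The hypothesis $F^{ee}\geq c\sum_iF_i$ does not bound $F^{ij}\zeta_i\zeta_j$ from below. Write $R_i=\langle(x,0),e_i\rangle$ and $q=W^{-1}$. Since $\sum_iv_iR_i=q\omega$, the cutoff part $-2R_i/r^2$ of $\zeta_i$ is almost orthogonal to $e$, yet it can have size up to $2/r$. So $\nabla\zeta$ need not be close to a multiple of $e$. Meanwhile $(F^{ij})$ is diagonal only in the principal frame, and a positive-definite matrix with large $F^{ee}$ can be arbitrarily small on such a vector.

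\emph{Case~2 rests on an incorrect first-order computation.} From $(1/W)_i=-\kappa_iv_i$ one gets $(\log W)_i=W\kappa_iv_i$. So at a maximum of $\eta\log W$, componentwise in the principal frame,
\[
\kappa_iv_i=-\frac{Ae^{A\zeta}\log W}{\eta W}\,\zeta_i .
\]
Your cutoff \emph{decreases} in $v$, so $\zeta_i\approx-v_i/(2M)$ in the directions carrying $\nabla v$. The identity then makes the curvature there positive, of size $\frac{Ae^{A\zeta}\log W}{2M\eta W}$, not large negative. The ``tilt'' you leave open is therefore not a technical remainder; it is the whole difficulty.

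\textbf{The missing idea is to never project onto $e$ and to use the sign information componentwise.} The paper normalizes $-M\leq v\leq 0$, takes $\varphi=1-|x|^2/\varrho^2+v/(2M)$, and considers a maximum of $\eta W$.
\begin{itemize}
\item[(i)] The first-order condition reads $\kappa_iv_i=-q\frac{\eta'}{\eta}\varphi_i$ in the principal frame. Hence $v_i\varphi_i\leq0$ for every principal direction with $\kappa_i\geq0$.
\item[(ii)] Because $\varphi$ \emph{increases} with $v$, once $q$ is small,
\[
\sum_iv_i\varphi_i=\langle\nabla v,\nabla\varphi\rangle\geq\frac{1-q^2}{2M}-\frac{2q}{\varrho}\geq\frac{1}{4M}.
\]
This positive sum must therefore come from principal directions with $\kappa_i<0$.
\item[(iii)] Cauchy--Schwarz gives $\sum_{\kappa_i<0}\varphi_i^2\geq\frac{1}{16M^2}$.
\item[(iv)] The negative-direction ellipticity $F_i\geq c(n,k)\sum_jF_j$ for $\kappa_i<0$ then yields $|\nabla\varphi|_F^2\geq\frac{c}{16M^2}\sum_jF_j$. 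This is the inequality you quote; the paper proves it via Newton's inequality for the deletion vector $\kappa|i$.
\item[(v)] For $k=n$, the sign argument in (i)--(ii) gives an immediate contradiction.
\item[(vi)] Otherwise $\Delta_F(\eta W)=W\Delta_F\eta+\eta W\mathcal E>0$ for $A$ large, contradicting the maximum.
\end{itemize}

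With your sign of the $v$-term, $\langle\nabla v,\nabla\zeta\rangle<0$ is fully compatible with $\kappa_i\geq0$ in every direction, so this mechanism gives nothing. You would have to flip the sign. Alternatively, you would need a separate quantitative argument: for example, that directions with $\kappa_i$ above a fixed multiple of $s$ contribute negligibly to $\langle\nabla v,\nabla\zeta\rangle$ when $W(0)$ is large, together with nondegeneracy of $F_i$ for $\kappa_i\leq c(n,k)s$. None of this appears in your proposal.
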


\begin{proof}
We first establish the negative-direction ellipticity estimate
\begin{equation}\label{2.3}
    \kappa_i<0
    \quad\Longrightarrow\quad
    F_i\geq c(n,k)\sum_{j=1}^nF_j.
\end{equation}
Fix an index $i$ with $\kappa_i<0$. Necessarily $k<n$. Set
$a_i=\sigma_{k-1}(\kappa|i)$,
$b_i=\sigma_{k-2}(\kappa|i)$,
$c_i=\sigma_{k-3}(\kappa|i)$, and
$e_i=\sigma_k(\kappa|i)$.
Since $\kappa\in\Gamma_k$, the deletion vector $\kappa|i$ belongs
to $\Gamma_{k-1}$, so $a_i,b_i,c_i>0$. Moreover,
$e_i=\sigma_k(\kappa)-\kappa_i a_i>0$, and hence
$\kappa|i\in\Gamma_k$. Newton's inequalities applied to the
$(n-1)$-dimensional vector $\kappa|i$ give
\[
    e_ic_i\leq\vartheta a_ib_i,
    \qquad
    \vartheta=
    \frac{(n-k)(k-2)}{k(n-k+2)}<1.
\]
Since
$\sigma_k=e_i+\kappa_i a_i$ and
$\sigma_{k-2}=b_i+\kappa_i c_i$, we have
\[
    F^2
    =
    \frac{e_i+\kappa_i a_i}{b_i+\kappa_i c_i}
    \leq\frac{e_i}{b_i},
\]
where the last inequality follows from $\kappa_i<0$ and
$a_ib_i\geq e_ic_i$.

Differentiating
$F=\left(\frac{\sigma_k}{\sigma_{k-2}}\right)^{\frac12}$ with respect to $\kappa_i$ gives
\[
\begin{aligned}
    F_i
    &=
    \frac{a_i-F^2c_i}{2F\sigma_{k-2}}
    \geq
    \frac{(1-\vartheta)a_i}{2F\sigma_{k-2}},
    \\
    \sum_jF_j
    &=
    \frac{(n-k+1)\sigma_{k-1}
    -(n-k+3)F^2\sigma_{k-3}}
    {2F\sigma_{k-2}}
    \leq
    \frac{(n-k+1)a_i}{2F\sigma_{k-2}},
\end{aligned}
\]
where we used
$\sigma_{k-1}=a_i+\kappa_i b_i\leq a_i$.
This proves \eqref{2.3}. If $k=n$, then
$\Gamma_n$ is the positive cone and the assertion is vacuous.

Set $\varrho=\frac{3r}{4}$ and
$M=1+\operatorname{osc}_{B_r}v$. After a vertical translation,
we may assume $-M\leq v\leq0$ in $B_r$. Let
$W=\sqrt{1+|Dv|^2}$, $q=W^{-1}$, and define
\[
    \varphi
    =
    1-\frac{|x|^2}{\varrho^2}
    +\frac{v}{2M},
    \qquad
    \eta=e^{A\varphi}-1,
\]
where $A\geq1$ will be chosen below. Since
$\varphi\leq0$ on $\partial B_\varrho$ and
$\varphi(0)\geq\frac12$, the function $\eta W$ attains a positive
interior maximum on $\{\varphi>0\}\cap B_\varrho$.
All quantities below are evaluated at this maximum in a principal
orthonormal frame.

The first derivative equation gives
$\frac{q_i}{q}=\frac{\eta'}{\eta}\varphi_i$. Using the graph identity
$q_i=-\kappa_i v_i$, we obtain
\[
    \kappa_i v_i
    =
    -q\frac{\eta'}{\eta}\varphi_i.
\]
Hence $v_i\varphi_i\leq0$ whenever $\kappa_i\geq0$.

Let $R_i=\langle(x,0),e_i\rangle$ and
$\omega=\langle(x,0),N_v\rangle$. Since
$|\nabla v|^2=1-q^2$ and
$\sum_i v_iR_i=q\omega$, we have
\[
    \langle\nabla v,\nabla\varphi\rangle
    =
    \frac{1-q^2}{2M}
    -\frac{2q\omega}{\varrho^2}
    \geq
    \frac{1-q^2}{2M}
    -\frac{2q}{\varrho}.
\]
Suppose that
\[
    q<q_0:=
    \min\left\{\frac12,\frac{\varrho}{16M}\right\}.
\]
Then
$\langle\nabla v,\nabla\varphi\rangle\geq\frac1{4M}$.
If $k<n$, the directions with $\kappa_i\geq0$ contribute
nonpositively, and therefore
\[
\begin{aligned}
    \frac1{4M}
    \leq
    \sum_{\kappa_i<0}v_i\varphi_i \leq
    \left(\sum_{\kappa_i<0}v_i^2\right)^{\frac12}
    \left(\sum_{\kappa_i<0}\varphi_i^2\right)^{\frac12}
    \leq
    \left(\sum_{\kappa_i<0}\varphi_i^2\right)^{\frac12}.
\end{aligned}
\]
Together with \eqref{2.3}, this gives
\[
    |\nabla\varphi|_F^2
    \geq
    \frac{c(n,k)}{16M^2}\sum_iF_i.
\]
If $k=n$, then all $\kappa_i>0$, so
$v_i\varphi_i\leq0$ for every $i$, contradicting
$\langle\nabla v,\nabla\varphi\rangle\geq\frac1{4M}$.
Thus in this case $q\geq q_0$ at the maximum.

Assume now $k<n$. The graph identities give
\[
\begin{aligned}
    \Delta_F\varphi=
    -\frac{2}{\varrho^2}
      \sum_iF_i(1-v_i^2)
    +\frac{2s\omega}{\varrho^2}
    +\frac{sq}{2M}\geq
    -\frac{2}{\varrho^2}\sum_iF_i
    -\frac{2s}{\varrho}.
\end{aligned}
\]
By concavity and homogeneity,
$\sum_iF_i\geq F(I_n)>0$. Hence
\[
    \frac{2s}{\varrho}
    \leq
    \frac{2s}{\varrho F(I_n)}
    \sum_iF_i.
\]
Combining the preceding estimates, we obtain
\[
    \Delta_F\varphi
    +A|\nabla\varphi|_F^2
    \geq
    \left[
        -\frac{2}{\varrho^2}
        -\frac{2s}{\varrho F(I_n)}
        +\frac{Ac(n,k)}{16M^2}
    \right]
    \sum_iF_i.
\]
Thus $A=A(n,k,s,\varrho,M)$ can be chosen so that
\begin{equation}\label{gradient-positive}
    \Delta_F\varphi
    +A|\nabla\varphi|_F^2>0
\end{equation}
whenever $q<q_0$.

Next, from $\Delta_Fq=-q\mathcal E$, where
$\mathcal E=\sum_iF_i\kappa_i^2$, we obtain
\[
    \Delta_FW
    =
    W\mathcal E
    +\frac{2}{W}|\nabla W|_F^2.
\]
At the maximum of $\eta W$,
$\nabla W=-\frac{W}{\eta}\nabla\eta$. Therefore the two gradient terms
in $\Delta_F(\eta W)$ cancel, and
\[
    \Delta_F(\eta W)
    =
    W\Delta_F\eta+\eta W\mathcal E.
\]
Since
\[
    \Delta_F\eta
    =
    Ae^{A\varphi}
    \bigl(
        \Delta_F\varphi
        +A|\nabla\varphi|_F^2
    \bigr),
\]
\eqref{gradient-positive} implies $\Delta_F\eta>0$ under the
assumption $q<q_0$. Since $\mathcal E\geq0$, we obtain
$\Delta_F(\eta W)>0$, contradicting the fact that $\eta W$
has an interior maximum. Hence $q\geq q_0$ at the maximum in all
cases.

Finally, $\varphi(0)\geq\frac12$ and $\varphi\leq1$, so
\[
    \bigl(e^{\frac{A}{2}}-1\bigr)W(0)
    \leq
    \max(\eta W)
    \leq
    \frac{e^A-1}{q_0}.
\]
Thus $W(0)$, and hence $|Dv(0)|$, is bounded by a constant depending
only on $n,k,s,r$ and $\operatorname{osc}_{B_r}v$. This proves the
lemma.
\end{proof}

By Lemma~\ref{gradient}, there exists a constant
$K_1=K_1(n,k,R,K)>0$ such that
\[
    \|Dv\|_{L^\infty(B_{\frac{7R_0}{8}})}
    \leq K_1.
\]
Replacing $K$ by $\max\{K,K_1\}$ and retaining the same notation,
we shall henceforth assume that
\[
    |Dv|\leq K
    \qquad\text{in }B_{\frac{7R_0}{8}}.
\]
This enlargement depends only on the original data $n,k,R,K$.

We shall prove the following uniform separation estimate.

\begin{theorem}\label{thm:separation}
Under the above assumptions, there exists a constant
$c=c(n,k,R,K)>0$ such that
\[
    v(x)-u(x)\geq c
    \qquad\text{for all }x\in B_{\frac{R}{2}}.
\]
\end{theorem}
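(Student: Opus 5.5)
We follow the two-point mechanism of Qiu and Yan outlined in the introduction. Set $d(x)$ for $x\in B_{R_0}$ to be the Euclidean distance from $X_v(x)$ to the closed hypograph $\{(y,t):y\in\overline B_{R_0},\,t\le u(y)\}$. Because $|Du|,|Dv|\le K$ on $B_{\frac{7R_0}{8}}$, $d$ is comparable to $v-u$ up to constants depending on $K$. It therefore suffices to bound $d$ from below on $B_{\frac R2}$. The proof has three steps: a Jacobi inequality, an interior seed, and propagation.

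\textbf{Step 1: Jacobi inequality.} Where $d>0$ is small, the nearest point is an interior point $X_u(y)$, with $X_v(x)-X_u(y)=d\,N$. Here $N$ is, up to sign, the common normal direction of both graphs at the paired points. We will compute the second variation of $|X_v(x)-X_u(y)|$ in the viscosity/barrier sense. We move $y$ along $\Sigma_u$ by parallel transport of the tangent directions at $x$ (the tangent spaces are parallel) and apply $\Delta_F$ on $\Sigma_v$. This should give a matrix inequality of the form
\[
\nabla^2 d \le -\bigl(h^v - h^u(I+d\,h^u)^{-1}\bigr)
\]
in the sense of barriers. The matrix $h^u(I+dh^u)^{-1}$ is exactly the rank-one-type compression of the curvature matrix of $u$ mentioned in the introduction. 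The quantitative compression inequality asserts that this compressed matrix stays in $\Gamma_k$ and that $F$ of it is at least $c\,F(\kappa[u])=c$ with $c$ close to $1$; we will prove it via the concavity of $F$ and the Newton-inequality structure used in \eqref{2.3}. Choosing $s_0<c$ then gives, by concavity,
\[
F^{ij}\bigl(h^u_{\rm comp}-h^v\bigr)_{ij}\ge F(h^u_{\rm comp})-F(h^v)\ge c-s_0>0 .
\]
With the first-order term $\nabla d$ absorbed, this yields
\[
\Delta_F(-\log d)\ge \frac{c-s_0}{d}+|\nabla\log d|_F^2 - C\sum_iF_i .
\]

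\textbf{Step 2: separation seed.} We compare $-\log d$ with a barrier $\psi=-\log d+A\phi$, where $\phi$ is a cutoff built from $|x|^2$ and the height $v$, as in Lemma~\ref{gradient}. At an interior maximum, the positive $1/d$ term beats the error terms unless $d\ge c_1$. This gives $d\ge c_1$ at some point of a fixed inner ball; we take the center after translating. We use that $v-u$ vanishes only on $\partial B_{R_0}$ and that $\sup(v-u)$ is controlled by the oscillation.

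\textbf{Step 3: propagation (main obstacle).} A modified doubling maximum principle spreads the seed to all of $B_{\frac R2}$. We consider
\[
\Phi(x,z)=\log d(x)-\log d(z)-\beta|x-z|^2-\text{cutoff}
\]
and apply the Jacobi inequality at $x$. At $z$, where $d$ is large, we need directional ellipticity: $F_{i}\ge c\sum_jF_j$ in the direction of $\nabla d$ after normalizing $\kappa[v]/s_0$. The obstacle is that $F$ is not uniformly elliptic. We will use the identity $\sum_iF_i\kappa_i=s_0$ together with \eqref{2.3} to show that in any direction where $F_i$ is small, $\kappa_i$ is large positive. The first-order condition $\kappa_iv_i\sim\nabla d$ then forces the gradient of $d$ into the well-controlled directions, exactly as in the gradient lemma. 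Closing this case analysis with constants depending only on $(n,k,R,K)$ is where I expect the real difficulty; I would first try to mimic the split $\kappa_i<0$ versus $\kappa_i\ge0$ from Lemma~\ref{gradient}. Combining the three steps gives $v-u\ge c\,d\ge c(n,k,R,K)$ on $B_{\frac R2}$.
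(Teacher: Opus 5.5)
Your three-step architecture (a Jacobi inequality for $-\log d$ with a $1/d$ term, a seed, then propagation) matches the paper, but Step 1 rests on a false premise, and Steps 2--3 are missing the ideas that make them work.

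\textbf{Step 1.} At a contact pair $(X,Y)$, minimizing over the lower point only gives $Y-X=-dN_u$; it does \emph{not} make $Y-X$ normal to $\Sigma_v$. Varying the upper point gives $\nabla\phi=\xi/d$, where $\xi_i=\langle N_u,Y_i\rangle$ and $|\xi|^2=1-c^2$ with $c=\langle N_u,N_v\rangle$. So the tangent planes are parallel only where the test function is critical. Every application, however, is at an extremum of an auxiliary function where $\nabla b\neq0$, and the propagation step is driven precisely by the gradient term.

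In the non-parallel case, the Schur complement of the two-point Hessian yields (modulo an $\varepsilon$-regularization)
$\nabla^2\phi\geq-\frac{c}{d}A_v+\frac{PDP+\xi\otimes\xi}{d^2}$, with $D=M(I_n-M)^{-1}$, $M=dA_u$ and $P=\operatorname{diag}(c,1,\ldots,1)$. The rank-one compression is $PDP$, and it can leave $\Gamma_k$: for $c=0$ one has $\sigma_r(PDP)=\sigma_r(D')$, and the principal submatrix $D'$ lies only in $\Gamma_{k-1}$.

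The paper repairs this by borrowing a fraction $a$ of the good term $\xi\otimes\xi$. Lemma~\ref{compression} shows $PDP+a(1-c^2)e_1\otimes e_1\in\Gamma_k$ with $F\geq\theta F(M)$. Here $\theta$ is a fixed constant not close to $1$, and only $\gamma=1-a$ of the gradient term survives. Its proof uses the Riccati-type inequality \eqref{3.3} along $x_i/(1-tx_i)$, the cofactor identity \eqref{3.4}, and the bounds \eqref{3.6}--\eqref{3.7}.

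Three further problems with your Step 1:
\begin{itemize}
\item Your matrix $h^u(I+dh^u)^{-1}$ has the wrong sign. The parallel surface toward $\Sigma_v$ has second fundamental form $h^u(I_n-dh^u)^{-1}\geq h^u$, so in the parallel case $F\geq1$ is mere monotonicity.
\item Your displayed inequality should read $\nabla^2d\leq h^v-h^u(I_n-dh^u)^{-1}$.
\item The true inequality \eqref{3.9} has no $-C\sum_iF_i$ error, and it cannot afford one. $\sum_iF_i$ is unbounded on $\{F=s_0\}$: take $n-1$ entries equal to $L$ and the last close to $-\frac{n-k}{k}L$. Such an error can never be dominated by the $1/d$ term.
\end{itemize}

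\textbf{Step 2} fails for the same reason. A cutoff built from $|x|^2$ produces $\Delta_F$-errors of size $\sum_iF_i$. In Lemma~\ref{gradient}, the maximum of $\eta W$ tied $\varphi_i$ to $\kappa_iv_i$; the first-order condition for $-\log d+A\phi$ gives no such relation. So nothing absorbs these errors, and the $1/d$ term does not win pointwise.

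The paper's seed argument is of divergence type:
\begin{itemize}
\item Multiplying \eqref{3.9} by $2s\sigma_{k-2}$ passes to the divergence-free tensor $T_v=T_{k-1}-s^2T_{k-3}$. The source becomes a uniform $\delta_0/d$, because $\sigma_{k-2}\geq c_*$ on the level set.
\item Local $L^1$ bounds for $\sigma_j$ and integration by parts show that the solution of $-\Delta_{T_v}Q=1$ has $\max Q\geq c$, with no ellipticity-ratio bound.
\item A comparison argument then gives $d^p\geq\alpha_0Q$.
\end{itemize}

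\textbf{Step 3}, which you leave open, needs two ingredients.
\begin{itemize}
\item The normalized coefficients $a^{ij}=\chi F^{ij}$ coming from $\sigma_k/\sigma_{k-1}$ at $(\kappa[v]/s_0,1)$ (Lemma~\ref{normalized-linearization}). They have bounded trace, which keeps cutoff errors bounded. They also satisfy $a_i\geq c$ for $\kappa_i\leq1$ and $a_i\kappa_i^2\geq c$ for $\kappa_i>1$. Since $\chi$ has no lower bound, only the gradient term \eqref{3.17} is propagated.
\item The geometric function $\Phi$, with $\Phi_i=\frac{W-1}{r}R_i-\kappa_iZ_i$, $\mathbf R\cdot\mathbf Z\geq c_Zr^2$ and $\Delta_a\Phi\geq cr\mathcal E_a-C/r$. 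This inserts $\kappa_i$ into the first-order condition for $b$. Once $\sum_{\kappa_i\leq1}R_i^2$ is shown to be small, it forces a direction with $\kappa_j>1$ where $|\nabla b|_a^2/b^2\geq c\beta^2r^2$, which bounds $b\rho^2$ via \eqref{3.22}.
\end{itemize}

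Your function $\log d(x)-\log d(z)-\cdots$ also applies the Jacobi inequality at the wrong point. At a maximum in $x$, $b=-\log d$ is touched from below, whereas \eqref{3.9} is an upper-support statement.
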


We divide the proof into five steps.

\subsection{The distance function}\label{distance-section}
Define the closed hypograph of $u$ by
\[
    E_u
    =
    \bigl\{
        (x,z)\in\mathbb{R}^{n+1}:
        x\in\overline{B}_{R_0},\ z\leq u(x)
    \bigr\},
\]
and, for $y\in B_{R_0}$, set
\[
    d(y)=\operatorname{dist}\bigl(X_v(y),E_u\bigr).
\]
Since $v(y)>u(y)$, we have $X_v(y)\notin E_u$. As $E_u$ is
closed, $d(y)>0$ for every $y\in B_{R_0}$. We write
$b=-\log d$.

We first compare the ambient distance $d$ quantitatively with the
gap $v-u$ and derive the Lipschitz estimate.

\begin{lemma}\label{lem:distance-gap}
For every $y\in B_{R_0}$,
\begin{equation}\label{3.2}
    \frac{v(y)-u(y)}{\sqrt{1+K^2}}
    \leq d(y)
    \leq v(y)-u(y).
\end{equation}
Moreover, $d$ is $\sqrt{1+K^2}$-Lipschitz continuous in
$B_{\frac{7R_0}{8}}$.
\end{lemma}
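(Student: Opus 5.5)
Both inequalities in \eqref{3.2} will follow from elementary geometry of the closed hypograph $E_u$; the Lipschitz bound then follows from the triangle inequality for distance functions. The upper bound is immediate: the point $(y,u(y))$ lies in $E_u$, so
\[
    d(y)\le |X_v(y)-(y,u(y))|=v(y)-u(y).
\]

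For the lower bound, fix $y\in B_{R_0}$ and let $(x,z)\in E_u$, so $x\in\overline B_{R_0}$ and $z\le u(x)$. Set $h=v(y)-u(y)>0$. We must show
\[
    |y-x|^2+(v(y)-z)^2\ge \frac{h^2}{1+K^2}.
\]
Since $z\le u(x)$ and $u$ is $K$-Lipschitz on $\overline B_{R_0}$ (by \eqref{1.2} and continuity up to the boundary, using convexity of the ball), we have
\[
    v(y)-z\ \ge\ v(y)-u(x)\ \ge\ h-K|x-y|.
\]
Put $t=|x-y|$. If $Kt\ge h$, then already $t^2\ge h^2/K^2\ge h^2/(1+K^2)$. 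Otherwise,
\[
    t^2+(h-Kt)^2\ \ge\ \min_{t\in\mathbb R}\bigl(t^2+(h-Kt)^2\bigr)=\frac{h^2}{1+K^2}.
\]
The minimum is the squared distance from the origin to a line of slope $K$ in the $(t,\cdot)$ plane, attained at $t=Kh/(1+K^2)$. Taking the infimum over $(x,z)\in E_u$ gives the left inequality in \eqref{3.2}. Only the gradient bound for $u$ is used here, not the one for $v$.

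For the Lipschitz estimate, the function $p\mapsto \operatorname{dist}(p,E_u)$ is $1$-Lipschitz on $\mathbb R^{n+1}$. Hence for $y_1,y_2\in B_{7R_0/8}$,
\[
    |d(y_1)-d(y_2)|\le |X_v(y_1)-X_v(y_2)|=\bigl(|y_1-y_2|^2+|v(y_1)-v(y_2)|^2\bigr)^{1/2}.
\]
Using the enlarged bound $|Dv|\le K$ on the convex set $B_{7R_0/8}$, we get $|v(y_1)-v(y_2)|\le K|y_1-y_2|$. This gives the constant $\sqrt{1+K^2}$.

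The only points requiring care are that the Lipschitz bound on $u$ extends to the closed ball $\overline B_{R_0}$ (by continuity of $u$ on $B_R\supset\overline B_{R_0}$), and that the Lipschitz statement is restricted to $B_{7R_0/8}$, where the gradient bound for $v$ from Lemma~\ref{gradient} is available. I expect no genuine obstacle.
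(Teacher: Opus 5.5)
Your proof is correct and follows essentially the same route as the paper. The vertical competitor $(y,u(y))$ gives the upper bound. The $K$-Lipschitz bound for $u$ on $\overline B_{R_0}$ gives $v(y)-z\ge h-K|x-y|$, and a two-case split finishes the lower bound: the paper splits on the sign of $v(y)-u(x)$ and uses Cauchy--Schwarz, while you split on the sign of $h-K|x-y|$ and minimize $t^2+(h-Kt)^2$, which is the same inequality. The Lipschitz claim follows, as in the paper, from the $1$-Lipschitz property of $\operatorname{dist}(\cdot,E_u)$ together with $|Dv|\le K$ on $B_{7R_0/8}$.
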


\begin{proof}
Since $(y,u(y))\in E_u$, the vertical competitor immediately gives
\[
    d(y)
    \leq
    \bigl|X_v(y)-(y,u(y))\bigr|
    =
    v(y)-u(y).
\]

To prove the opposite inequality, fix
$x\in\overline{B}_{R_0}$. Since $|Du|\leq K$,
\[
    |u(x)-u(y)|\leq K|x-y|.
\]
If $v(y)>u(x)$, then
\begin{align*}
    v(y)-u(y)
    &=
    v(y)-u(x)+u(x)-u(y)\\
    &\leq
    v(y)-u(x)+K|x-y|\\
    &\leq
    \sqrt{1+K^2}\,
    \sqrt{|x-y|^2+\bigl(v(y)-u(x)\bigr)^2}.
\end{align*}
The last square root is precisely the distance from $X_v(y)$ to
the vertical ray
\[
    \bigl\{(x,z):z\leq u(x)\bigr\}.
\]

If $v(y)\leq u(x)$, then
\[
    v(y)-u(y)
    \leq
    u(x)-u(y)
    \leq
    K|x-y|
    \leq
    \sqrt{1+K^2}\,|x-y|,
\]
while $|x-y|$ is the distance from $X_v(y)$ to the same vertical
ray. Thus, in either case,
\[
    v(y)-u(y)
    \leq
    \sqrt{1+K^2}\,
    \operatorname{dist}\bigl(
        X_v(y),\{(x,z):z\leq u(x)\}
    \bigr).
\]
Taking the infimum over $x\in\overline{B}_{R_0}$ yields
\[
    v(y)-u(y)
    \leq
    \sqrt{1+K^2}\,d(y),
\]
which proves \eqref{3.2}.

Finally, the distance to a closed set is $1$-Lipschitz in the
ambient Euclidean space. Hence, for
$y_1,y_2\in B_{\frac{7R_0}{8}}$,
\begin{align*}
    |d(y_1)-d(y_2)|
    &\leq
    |X_v(y_1)-X_v(y_2)|\\
    &=
    \sqrt{
        |y_1-y_2|^2
        +
        |v(y_1)-v(y_2)|^2
    }\\
    &\leq
    \sqrt{1+K^2}\,|y_1-y_2|,
\end{align*}
where the last inequality follows from
$\|Dv\|_{L^\infty(B_{\frac{7R_0}{8}})}\leq K$.
\end{proof}

\begin{remark}\label{rem:vertical-gap-hessians}
The pointwise comparison \eqref{3.2} does not imply a corresponding
comparison of the covariant Hessians. Indeed, set $\rho=v-u$ and
denote by $\nabla^u$ and $\nabla^v$ the Levi--Civita connections
of $\Sigma_u$ and $\Sigma_v$, respectively. For a graph function
$w$, the Christoffel symbols in graph coordinates are
\[
    \Gamma_{ij}^{\ell}[w]
    =
    \frac{D_\ell w\,D_{ij}w}{1+|Dw|^2}.
\]
Therefore, at a common base point,
\begin{align*}
    \bigl(\nabla_u^2\rho\bigr)_{ij}
    &=
    D_{ij}v
    -
    \frac{1+Du\cdot Dv}{1+|Du|^2}\,D_{ij}u,\\
    \bigl(\nabla_v^2\rho\bigr)_{ij}
    &=
    \frac{1+Du\cdot Dv}{1+|Dv|^2}\,D_{ij}v
    -
    D_{ij}u.
\end{align*}
Thus the two intrinsic Hessians contain different
gradient-dependent coefficients. In particular, a differential
inequality for the vertical gap does not follow from
\eqref{3.2}. We therefore derive the required differential
inequality directly for the geometric distance $d$ by a
two-point comparison.
\end{remark}

\subsection{The compression inequality}\label{compression-section}
The two-point second-variation calculation leads to the matrix
$M(I_n-M)^{-1}$. The following lemma gives the compression estimate
needed below.

\begin{lemma}\label{compression}
Let $3\leq k\leq n$, and let $M\in\Gamma_k$ be a real symmetric
matrix such that $I_n-M>0$. Set
\[
    D=M(I_n-M)^{-1},
    \qquad
    a_0=\frac{n-k}{n}.
\]
For $a_0<a<1$, define
\[
    \theta^2
    =
    \frac{a-a_0}{\frac{n}{k}+a},
    \qquad
    \theta>0.
\]
In any orthonormal basis $\{e_1,\ldots,e_n\}$, let
\[
    P=\operatorname{diag}(c,1,\ldots,1),
    \qquad
    B=PDP+a(1-c^2)e_1\otimes e_1,
    \qquad
    -1\leq c\leq1.
\]
Then $B\in\Gamma_k$ and
\[
    F(B)\geq\theta F(M).
\]
\end{lemma}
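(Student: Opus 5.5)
The plan is to reduce everything to eigenvalue statements about $D$ and then handle the rank-one modification $B$ by an interpolation in the parameter $c$. The worst case should be the pure compression $c=0$, and that case is treated by a Newton–Maclaurin computation.

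\textbf{Step 1: $D\in\Gamma_k$ and $F(D)\ge F(M)$.} The matrices $M$ and $D=M(I_n-M)^{-1}$ commute, and $D$ has eigenvalues $f(\mu_i)$ with $f(t)=t/(1-t)$ on $t<1$. Since $f(t)-t=t^2/(1-t)\ge0$, we get $D-M\ge0$. Because $\Gamma_k+\Gamma_n\subset\Gamma_k$ and $F$ is elliptic (monotone in the matrix order), this gives $D\in\Gamma_k$ and $F(D)\ge F(M)$. It therefore suffices to prove $B\in\Gamma_k$ and $F(B)\ge\theta F(D)$, using only $D\in\Gamma_k$ together with the extra structural fact $D+I_n=(I_n-M)^{-1}>0$.

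\textbf{Step 2: the case $c=0$.} Write $D$ in block form with respect to $e_1$,
\[
    D=\begin{pmatrix} d_{11} & w^T\\ w & D'\end{pmatrix},
    \qquad
    B=\begin{pmatrix} c^2d_{11}+a(1-c^2) & c\,w^T\\ c\,w & D'\end{pmatrix}.
\]
For $c=0$ we have $B=\operatorname{diag}(a,D')$. The compression $D'$ lies in $\Gamma_{k-1}$ because $D\in\Gamma_k$. Moreover
\[
    \sigma_j(B)=a\,\sigma_{j-1}(D')+\sigma_j(D').
\]
The goal is to show $\sigma_j(B)>0$ for $j\le k$ and $\sigma_k(B)/\sigma_{k-2}(B)\ge\theta^2\,\sigma_k(D)/\sigma_{k-2}(D)$.

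The tool is the Cauchy expansion
\[
    \sigma_j(D)=d_{11}\sigma_{j-1}(D')+\sigma_j(D')-\sum_{|\alpha|=j-2}(\ldots)\,w\otimes w,
\]
combined with $d_{11}>-1$ (from $D+I>0$) and Newton–Maclaurin inequalities for $D'$ in dimension $n-1$. These should give
\[
    \sigma_k(D')\ge -\tfrac{n-k}{n}\,\sigma_{k-1}(D')\cdot(\text{normalization}).
\]
This is where the threshold $a_0=\frac{n-k}{n}$ enters: the part $(a-a_0)\sigma_{k-1}(D')$ of $\sigma_k(B)$ is genuinely positive. For the denominator I would use the upper bound $\sigma_{k-2}(B)\le(\frac nk+a)\,\sigma_{k-1}(D')/(\ldots)$, which is where the factor $\frac nk+a$ in $\theta^2$ should originate.

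\textbf{Step 3: interpolation in $c$.} I would rewrite $B$ through $G=(I_n-M)^{-1}=D+I_n>0$. Using $P^2=I_n-(1-c^2)e_1\otimes e_1$,
\[
    B=PGP-I_n+(1+a)(1-c^2)\,e_1\otimes e_1.
\]
The endpoints are then $B|_{c=\pm1}=D$ and $B|_{c=0}=\operatorname{diag}(a,D')$. I would try to exhibit $B$ as dominating, in the matrix order, a combination $c^2D+(1-c^2)\operatorname{diag}(a,D')$ plus a positive semidefinite error. The off-diagonal defect $(c-c^2)(w e_1^T+e_1w^T)$ would be absorbed by Cauchy–Schwarz against the positive matrix $G$: note that $(1-c^2)$ times the $e_1$-part of $G$ supplies exactly the room needed, which is why $a$ is paired with $1$ as $(1+a)$. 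Once this ordering holds, concavity and homogeneity of $F$, together with $\Gamma_k$ being a convex cone, give
\[
    F(B)\ge c^2F(D)+(1-c^2)\,\theta F(D)\ge\theta F(D),
\]
since $\theta<1$.

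\textbf{Expected obstacle.} The main difficulty is Step 2, namely getting the sharp constant $\theta^2=(a-a_0)/(\frac nk+a)$ from the compressed matrix. The required control of $\sigma_k(D')$ from below by $\sigma_{k-1}(D')$ must use $d_{11}>-1$, since membership $D\in\Gamma_k$ alone does not suffice. If the clean matrix-order domination in Step 3 fails, the fallback is to compute $\sigma_j(B)$ directly as a quadratic polynomial in $c$ via the Cauchy expansion, and check positivity and the quotient bound at the minimizing $c$.
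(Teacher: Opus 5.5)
Your reduction at the end of Step~1 replaces the lemma by a false statement, so Steps~2 and~3 aim at the wrong target. There are two separate problems.

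First, $F(B)\ge\theta F(D)$ fails even under the full hypotheses of the lemma. Take $M=(1-\epsilon)I_n$ and $c=0$, so that $D=TI_n$ with $T=(1-\epsilon)/\epsilon$ and $B=\operatorname{diag}(a,T,\ldots,T)$. For $k<n$ one gets $F(B)^2/F(D)^2\to\frac{n-k}{n-k+2}$ as $\epsilon\to0$. This is smaller than $\theta^2$, for instance, when $n=5$, $k=4$ and $a>0.925$. For $k=n$ one has $F(B)^2\le\frac{aT}{n-1}$ while $F(D)^2=T^2/\binom{n}{2}$, so the ratio tends to $0$ for every $a$.

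Second, the hypotheses you keep, $D\in\Gamma_k$ and $D+I_n>0$, do not even give $B\in\Gamma_k$. Take $n=4$, $k=3$, $D=\operatorname{diag}(2,L,L,-\lambda)$ with $a<\lambda<1$ and $L$ large, $e_1$ the first coordinate vector, and $c=0$. Then $D\in\Gamma_3$, $D+I_4>0$ and $d_{11}=2$, yet $\sigma_3(B)=(a-\lambda)L^2-2a\lambda L<0$. Hence no argument based on $d_{11}>-1$ and Newton--Maclaurin for $D'$ can produce $\sigma_k(D')\ge-a_0\sigma_{k-1}(D')$. Step~2 as written, with its unspecified ``normalization'' factors, has no mechanism for the key inequalities. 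The information you discarded, namely $M\in\Gamma_k$ with every eigenvalue $\mu_j<1$, is exactly what the proof needs.

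The paper keeps $F(M)$, not $F(D)$, as the comparison quantity and works with the spectral data of $M$. Write $M\xi_j=\mu_j\xi_j$, $\tau_j=\mu_j/(1-\mu_j)$ and $w_j=\langle e_1,\xi_j\rangle^2$. The $(1,1)$-cofactor of $zI_n-D$ gives $\sigma_r(D')=\sum_jw_j\sigma_r(\tau|j)$, so everything reduces to the deleted vectors $\tau|j$.

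\begin{itemize}
\item The bound $\sigma_k(\tau|j)\ge-a_0\sigma_{k-1}(\tau|j)$ comes from the flow $x_i(t)=x_i/(1-tx_i)$ from $\mu|j$ to $\tau|j$. By Newton's inequality, $R=\sigma_k/\sigma_{k-1}$ obeys $R'\ge\frac{k}{n-k}R^2$. Also $R(0)>-\mu_j>-1$, because $\sigma_k(\mu)>0$ and $\mu_j<1$. Integrating yields $R(1)>-\frac{n-k}{n}$, which is where $a_0$ comes from.
\item The factor $\frac{n}{k}+a$ comes from increasing $\mu_j$ to $1$ and to $+\infty$. This gives $F(M)^2\le\frac{n}{k}\,\sigma_{k-1}/\sigma_{k-2}$ and $F(M)^2\le\sigma_{k-1}/\sigma_{k-3}$ on $\mu|j$, and hence on $\tau|j$. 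Again $\mu_j<1$ is essential.
\end{itemize}

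Your Step~3 domination cannot hold. The matrix $B-\bigl[c^2D+(1-c^2)\operatorname{diag}(a,D')\bigr]$ has zero diagonal blocks and off-diagonal block $(c-c^2)w$. It is therefore traceless, hence indefinite whenever $w\neq0$ and $c\notin\{0,1\}$. No Cauchy--Schwarz against $G$ can repair that.

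Your fallback is the right idea, and it is in fact exact. Expanding principal minors gives $\sigma_r(B)=c^2\sigma_r(D)+(1-c^2)\sigma_r(\operatorname{diag}(a,D'))$ for every $r$, so no minimization in $c$ is needed. Both $D$ and $\operatorname{diag}(a,D')$ satisfy $\sigma_k\ge\theta^2F(M)^2\sigma_{k-2}$; for $D$ this holds because $F(D)\ge F(M)$ and $\theta<1$. Adding the two inequalities gives $F(B)\ge\theta F(M)$, which is the lemma's actual conclusion.
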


\begin{proof}
Since
\[
    D-M=M^2(I_n-M)^{-1}\geq0,
\]
and the G{\aa}rding cone $\Gamma_k$ is monotone under
positive-semidefinite perturbations, we have
\[
    D\in\Gamma_k.
\]
The ellipticity of $F$ on $\Gamma_k$ then gives
\[
    F(D)\geq F(M).
\]

Fix the orthonormal basis from the statement and set
\[
    D'=(D_{\alpha\beta})_{2\leq\alpha,\beta\leq n}.
\]
Choose an orthonormal eigenbasis $\{\xi_j\}_{j=1}^n$ of $M$ and write
\[
    M\xi_j=\mu_j\xi_j,
    \qquad
    \tau_j=\frac{\mu_j}{1-\mu_j},
    \qquad
    w_j=\langle e_1,\xi_j\rangle^2.
\]
Then $\mu_j<1$, $D\xi_j=\tau_j\xi_j$, $w_j\geq0$, and
$\sum_jw_j=1$. We write $\mu|j$ and $\tau|j$ for the vectors
obtained by deleting the $j$th entry, and use the convention
$\sigma_r=0$ whenever $r$ exceeds the dimension of the vector or
matrix under consideration.

We first prove
\[
    \sigma_k(\tau|j)
    \geq
    -a_0\sigma_{k-1}(\tau|j),
    \qquad
    1\leq j\leq n.
\]
Fix $j$ and suppose first that $k<n$. Set
$x=\mu|j\in\Gamma_{k-1}$. For $0\leq t\leq1$, define
\[
    x_i(t)=\frac{x_i}{1-tx_i},
    \qquad
    S_r(t)=\sigma_r(x(t)),
    \qquad
    R(t)=\frac{S_k(t)}{S_{k-1}(t)}.
\]
Since $x_i'(t)=x_i(t)^2\geq0$, the path $x(t)$ remains in
$\Gamma_{k-1}$ and $S_{k-1}(t)>0$. Moreover,
\[
    \sigma_k(\mu)
    =
    \sigma_k(x)+\mu_j\sigma_{k-1}(x)>0,
\]
so
\[
    R(0)
    =
    \frac{\sigma_k(x)}{\sigma_{k-1}(x)}
    >-\mu_j>-1.
\]

Differentiating $S_r$ gives
\[
    S_r'
    =
    \sum_i x_i(t)^2\sigma_{r-1}(x(t)|i)
    =
    S_1S_r-(r+1)S_{r+1}.
\]
Hence
\[
    R'
    =
    kR^2-(k+1)\frac{S_{k+1}}{S_{k-1}}.
\]
If $k\leq n-2$, the normalized Newton inequality in $n-1$
variables gives
\[
    \frac{S_{k+1}}{S_{k-1}}
    \leq
    \frac{
        \binom{n-1}{k-1}\binom{n-1}{k+1}
    }{
        \binom{n-1}{k}^2
    }R^2
    =
    \frac{k(n-k-1)}
         {(k+1)(n-k)}R^2.
\]
When $k=n-1$, we have $S_{k+1}=0$. Therefore
\begin{equation}\label{3.3}
    R'
    \geq
    \frac{k}{n-k}R^2.
\end{equation}
In particular, $R$ is nondecreasing. If $R$ remains negative on
$[0,1]$, then
\[
    \left(-\frac1R\right)'
    =
    \frac{R'}{R^2}
    \geq
    \frac{k}{n-k}.
\]
Integrating from $0$ to $1$, we obtain
\[
    R(1)
    \geq
    \frac{R(0)}
         {1-\frac{k}{n-k}R(0)}
    >
    -\frac{n-k}{n}
    =
    -a_0.
\]
If $R$ becomes nonnegative before $t=1$, then $R(1)\geq0$.
Since $x(1)=\tau|j$, it follows that
\[
    \sigma_k(\tau|j)
    \geq
    -a_0\sigma_{k-1}(\tau|j).
\]
If $k=n$, then $a_0=0$ and $\sigma_n(\tau|j)=0$, so the same
inequality is immediate.

We next pass to the principal submatrix $D'$. The $(1,1)$-cofactor
of $zI_n-D$ gives
\[
    \det(zI_{n-1}-D')
    =
    \langle
        \operatorname{adj}(zI_n-D)e_1,e_1
    \rangle
    =
    \sum_{j=1}^n
    w_j\prod_{i\neq j}(z-\tau_i).
\]
Comparing coefficients yields
\begin{equation}\label{3.4}
    \sigma_r(D')
    =
    \sum_{j=1}^n
    w_j\sigma_r(\tau|j),
    \qquad
    0\leq r\leq n-1.
\end{equation}
The identity also holds for $r=n$, with both sides equal to zero.
Since each $\tau|j$ belongs to $\Gamma_{k-1}$,
\[
    D'\in\Gamma_{k-1},
    \qquad
    \sigma_k(D')
    \geq
    -a_0\sigma_{k-1}(D').
\]

Set
\[
    B_0=\operatorname{diag}(a,D').
\]
For $1\leq r\leq k-1$,
\[
    \sigma_r(B_0)
    =
    \sigma_r(D')
    +
    a\sigma_{r-1}(D')
    >0,
\]
while
\begin{equation}\label{3.5}
    \sigma_k(B_0)
    =
    \sigma_k(D')
    +
    a\sigma_{k-1}(D')
    \geq
    (a-a_0)\sigma_{k-1}(D')
    >0.
\end{equation}
Thus $B_0\in\Gamma_k$.

We now estimate $F(B_0)$. Set
\[
    q_*=F(M)^2.
\]
Fix $j$ and write $x=\mu|j$. Replacing the $j$th eigenvalue
$\mu_j$ by any $t\geq\mu_j$ preserves admissibility. Evaluating
the quotient at $t=1$ and then letting $t\to+\infty$, we obtain
\begin{align}
    q_*
    &\leq
    \frac{
        \sigma_k(x)+\sigma_{k-1}(x)
    }{
        \sigma_{k-2}(x)+\sigma_{k-3}(x)
    }
    \leq
    \frac{n}{k}
    \frac{\sigma_{k-1}(x)}
         {\sigma_{k-2}(x)},
    \label{3.6}
    \\
    q_*
    &\leq
    \frac{\sigma_{k-1}(x)}
         {\sigma_{k-3}(x)}.
    \label{3.7}
\end{align}
For the second inequality in \eqref{3.6}, it is enough to show
\[
    \sigma_k(x)
    \leq
    \frac{n-k}{k}\sigma_{k-1}(x).
\]
This is immediate if $\sigma_k(x)\leq0$. If
$\sigma_k(x)>0$, then $x\in\Gamma_k$, and since each component
of $x$ is less than $1$,
\[
    \frac{\sigma_k(x)}
         {\sigma_{k-1}(x)}
    \leq
    \frac{\sigma_k(1,\ldots,1)}
         {\sigma_{k-1}(1,\ldots,1)}
    =
    \frac{n-k}{k}.
\]
For $k=n$, we have $\sigma_k(x)=0$.

Since
\[
    \tau_i-\mu_i
    =
    \frac{\mu_i^2}{1-\mu_i}
    \geq0,
\]
the estimates \eqref{3.6} and \eqref{3.7} remain valid after
replacing $\mu|j$ by $\tau|j$. Hence
\[
    q_*
    \bigl(
        \sigma_{k-2}(\tau|j)
        +
        a\sigma_{k-3}(\tau|j)
    \bigr)
    \leq
    \left(
        \frac{n}{k}+a
    \right)
    \sigma_{k-1}(\tau|j).
\]
Multiplying by $w_j$, summing over $j$, and using \eqref{3.4},
we obtain
\[
    q_*
    \bigl(
        \sigma_{k-2}(D')
        +
        a\sigma_{k-3}(D')
    \bigr)
    \leq
    \left(
        \frac{n}{k}+a
    \right)
    \sigma_{k-1}(D').
\]
Together with \eqref{3.5}, this yields
\begin{align*}
    F(B_0)^2
    &=
    \frac{
        \sigma_k(D')+a\sigma_{k-1}(D')
    }{
        \sigma_{k-2}(D')+a\sigma_{k-3}(D')
    }\\
    &\geq
    \frac{
        (a-a_0)\sigma_{k-1}(D')
    }{
        \sigma_{k-2}(D')+a\sigma_{k-3}(D')
    }\\
    &\geq
    \frac{a-a_0}{\frac{n}{k}+a}\,q_*
    =
    \theta^2q_*.
\end{align*}
Therefore
\[
    F(B_0)\geq\theta F(M).
\]

Finally, let $J$ be an index set containing $1$. Expanding the
corresponding principal minor gives
\[
    \det B[J]
    =
    c^2\det D[J]
    +
    a(1-c^2)\det D[J\setminus\{1\}].
\]
The principal minors not containing $1$ are unchanged. Summing over
all index sets of size $r$ gives
\[
    \sigma_r(B)
    =
    c^2\sigma_r(D)
    +
    (1-c^2)\sigma_r(B_0),
    \qquad
    1\leq r\leq n.
\]
Thus $B\in\Gamma_k$.

Since $0<\theta<1$, $F(D)\geq F(M)$, and
$F(B_0)\geq\theta F(M)$,
\begin{align*}
    \sigma_k(B)
    &=
    c^2\sigma_k(D)
    +
    (1-c^2)\sigma_k(B_0)\\
    &\geq
    \theta^2q_*
    \left[
        c^2\sigma_{k-2}(D)
        +
        (1-c^2)\sigma_{k-2}(B_0)
    \right]=
    \theta^2q_*\sigma_{k-2}(B).
\end{align*}
Since $\sigma_{k-2}(B)>0$,
\[
    F(B)^2
    \geq
    \theta^2F(M)^2,
\]
and hence
\[
    F(B)\geq\theta F(M).
\]
\end{proof}

From now on, we fix
\begin{equation}\label{3.8}
    a=1-\frac{k}{2n},
    \qquad
    \theta=
    \frac{k}{\sqrt{2n^2+2nk-k^2}},
    \qquad
    s_0=\frac{\theta}{4}.
\end{equation}
Since $a-a_0=\frac{k}{2n}$,
\[
    \theta^2
    =
    \frac{
        \frac{k}{2n}
    }{
        \frac{n}{k}+1-\frac{k}{2n}
    }
    =
    \frac{k^2}{2n^2+2nk-k^2},
\]
which agrees with the definition of $\theta$ in
Lemma~\ref{compression}. In particular, $0<s_0<\frac{\theta}{2}$.

\subsection{The two-surface inequality}
We now return to the distance function $d$. Since $d$ need not be
smooth, we formulate its differential inequality using a smooth test
function at an interior closest pair. Lemma~\ref{compression}
provides the matrix estimate needed in the second-variation
calculation.

\begin{proposition}\label{prop:distance}
Suppose that $u$ and $v$ are admissible solutions of
\[
    F(\kappa[u])=1,
    \qquad
    F(\kappa[v])=s,
    \qquad
    0<s\leq\frac{\theta}{2},
\]
where $a$ and $\theta$ are given by \eqref{3.8}.
Let $X=X_u(x)\in\Sigma_u$ and $Y=X_v(y)\in\Sigma_v$ be interior
points such that
\[
    |Y-X|=\operatorname{dist}(Y,E_u)=d>0.
\]
Suppose that $\phi$ is $C^2$ in a neighborhood of $Y$ and
\[
    (X',Y')
    \longmapsto
    \phi(Y')+\log|Y'-X'|
\]
has a local minimum at $(X,Y)$. Then
\begin{equation}\label{3.9}
    \Delta_F\phi
    \geq
    \gamma|\nabla\phi|_F^2+\frac{\delta}{d},
    \qquad
    \gamma=1-a,
    \qquad
    \delta=\frac{\theta}{2}.
\end{equation}
All quantities in \eqref{3.9} are evaluated at $Y$.
\end{proposition}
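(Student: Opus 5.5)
We run the standard two-point second-variation argument at the closest pair $(X,Y)$, following Qiu--Yan, and feed the resulting matrix into Lemma~\ref{compression}.

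\emph{First-order conditions.} Set $e=(Y-X)/d$. Since $X$ is a nearest point of $E_u$ to $Y$ and $X$ lies on the interior of the graph, $e$ is normal to $\Sigma_u$ at $X$. Because $Y$ lies above the hypograph, $e=-N_u(X)$, i.e.\ $e$ points along the upward normal. Minimality of $\phi(Y')+\log|Y'-X'|$ in the $Y'$ variable gives
\[
\nabla\phi(Y)=-\frac{1}{d}\,e^{\top_Y},
\]
the tangential projection of $e$ to $T_Y\Sigma_v$. We write $e=\cos\alpha\,(-N_v)+\sin\alpha\,\epsilon_1$, with $\epsilon_1$ a unit tangent vector at $Y$. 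Then $|\nabla\phi|^2=\sin^2\alpha/d^2$, and only the $\epsilon_1$ component of $\nabla\phi$ is nonzero. Choose an orthonormal frame $\{\epsilon_i\}$ at $Y$ with this $\epsilon_1$. At $X$, take the frame $\{\eta_i\}$ obtained by rotating $T_Y\Sigma_v$ onto $T_X\Sigma_u$ in the plane spanned by $N_v,N_u$. This fixes $\epsilon_2,\dots,\epsilon_n$ and sends $\epsilon_1$ to $\eta_1$, with $\langle\eta_1,\epsilon_1\rangle=c:=\cos\alpha$.

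\emph{Second variation.} Consider coupled variations $Y'=Y(t)$ along geodesics of $\Sigma_v$ in direction $\epsilon_i$, and $X'=X(t)$ on $\Sigma_u$ in a direction $\zeta_i\in T_X\Sigma_u$ to be optimized. Expanding $\log|Y'-X'|$ to second order and using the minimum yields a quadratic inequality in $(\epsilon_i,\zeta_i)$:
\begin{align*}
0\le \phi_{ii}
&+\frac{1}{d}\Bigl(\langle h^v(\epsilon_i,\epsilon_i)N_v,e\rangle-\langle h^u(\zeta_i,\zeta_i)N_u,e\rangle\Bigr)\\
&+\frac{1}{d^2}\Bigl(|\epsilon_i-\zeta_i|^2-2\langle \epsilon_i-\zeta_i,e\rangle^2\Bigr).
\end{align*}
With $e=-N_u$, the term $-\langle h^u N_u,e\rangle$ becomes $+h^u(\zeta,\zeta)$, with the sign that favours $u$'s curvature. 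The same expansion produces $d^{-1}\cos\alpha\,h^v(\epsilon_i,\epsilon_i)$ terms.

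Minimizing over $\zeta_i$ is a quadratic problem in the $\kappa^u$ matrix $M:=d\,h^u$. The minimizer is $\zeta=(I-M)^{-1}(\text{projection of }\epsilon_i)$, which is admissible because $I-M>0$. This positivity follows from the fact that the ball of radius $d$ about $Y$ lies outside $E_u$, giving the interior sphere condition $d\,\kappa^u<1$, strictly after perturbation. The optimal value contributes $d^{-1}$ times the quadratic form of $P\,M(I-M)^{-1}P$, where $P=\operatorname{diag}(c,1,\ldots,1)$. The $\sin^2\alpha$ terms in the $\epsilon_1$ direction, together with $|\nabla\phi|^2=\sin^2\alpha/d^2$, produce the rank-one piece $(1-c^2)e_1\otimes e_1$. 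We split it with the parameter $a$: a fraction $a(1-c^2)$ goes into the matrix $B$ of Lemma~\ref{compression}, and the remaining $1-a=\gamma$ fraction is kept as $\gamma\,\phi_1^2=\gamma|\nabla\phi|^2$ in direction $1$.

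\emph{Contraction with $F^{ij}$.} Summing against $F^{ij}$ of $v$ at $Y$ (diagonal in the frame) gives
\[
\Delta_F\phi\ \ge\ \gamma|\nabla\phi|_F^2+\frac{1}{d}\Bigl(F^{ij}(B)_{ij}-c\,F^{ij}h^v_{ij}\Bigr)
\]
after the $d^{-2}$ terms cancel between the $\zeta$-optimization and the $\phi$ gradient terms. By concavity and homogeneity of $F$, $F^{ij}B_{ij}\ge F(B)$. Lemma~\ref{compression}, applied with $M=d\,h^u$ where $F(M)=d$, needs care with normalization: the bound comes out as $\ge\theta F(h^u)$ times the appropriate factor, giving $\ge \theta$ after the $1/d$ bookkeeping. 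Also $F^{ij}h^v_{ij}=s$, so
\[
\frac1d\,(\theta-|c|s)\ \ge\ \frac1d\Bigl(\theta-\frac{\theta}{2}\Bigr)=\frac{\delta}{d}.
\]

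\emph{Main obstacle.} The hardest step is the bookkeeping of the second variation: choosing $\zeta_i$ optimally and verifying that the leftover terms assemble exactly into $PDP+a(1-c^2)e_1\otimes e_1$ with $D=M(I-M)^{-1}$. This includes showing that the $d^{-2}$ and angle terms either cancel or have a favourable sign, with no mismatch from the differing normals. Scaling so that $1/d$ rather than $d^{-2}$ multiplies $F(B)$ also needs checking. I would first carry out the computation in the model case $\alpha=0$, then handle the rotation by the isometric pairing of frames described above.
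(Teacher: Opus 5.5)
Your proposal follows the paper's proof. It uses the same first-order identities $Y-X=-dN_u$ and $\nabla\phi=\xi/d$ with $\xi_i=\langle N_u,Y_i\rangle$, and the same adapted frames giving $P=\operatorname{diag}(c,1,\ldots,1)$ and $P^2=I_n-\xi\otimes\xi$. Your optimization over $\zeta$ is the paper's Schur complement, yielding $d^2\nabla^2\phi\geq -cdA_v+\xi\otimes\xi+PDP$. The remaining steps also match: the split of $\xi\otimes\xi$ with weight $a$, Lemma~\ref{compression}, concavity, and $\theta-cs\geq\theta/2$. For a possibly degenerate $I_n-M$, the paper applies the lemma to $M/(1+\varepsilon)$ and lets $\varepsilon\downarrow0$.

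Three points of bookkeeping need fixing:
\begin{enumerate}
\item With the paper's convention $X_{\alpha\beta}=-h^u_{\alpha\beta}N_u$, the curvature terms in your displayed second variation are $\frac1d\bigl(c\,h^v(\epsilon,\epsilon)-h^u(\zeta,\zeta)\bigr)$, with both signs opposite to your display. This is the sign your later use of $(I_n-M)^{-1}$ and of $-c\,F^{ij}h^v_{ij}$ already assumes.
\item The Schur term is $d^{-2}P(I_n-M)^{-1}P$, not a $d^{-1}$ term. Then $F(B)\geq\theta F(M)=\theta d$ gives exactly $\theta/d$.
\item The adapted frame is not principal for $h^v$, so $(F^{ij})$ is not diagonal in it. You must contract the full matrix inequality with $(F^{ij})$ rather than sum diagonal entries.
\end{enumerate}
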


\begin{proof}
Choose geodesic coordinates centered at $X$ and $Y$, and write
$X_\alpha,Y_i$ for the corresponding coordinate vectors. Their
orthonormal bases will be chosen below. All quantities in the
following calculation are evaluated at $X$ and $Y$. Thus
\[
    \langle X_\alpha,X_\beta\rangle=\delta_{\alpha\beta},
    \qquad
    \langle Y_i,Y_j\rangle=\delta_{ij},
    \qquad
    X_{\alpha\beta}=-h^u_{\alpha\beta}N_u,
    \qquad
    Y_{ij}=-h^v_{ij}N_v.
\]

Set
\[
    \Psi(X',Y')
    =
    \phi(Y')+\log|Y'-X'|.
\]
Differentiating in the lower variables gives
\[
    0=\Psi_\alpha
    =
    -\frac{\langle Y-X,X_\alpha\rangle}{d^2}.
\]
Hence $Y-X$ is normal to $\Sigma_u$. Since $X$ is an interior
nearest point on the boundary of $E_u$ and $N_u$ is the downward
unit normal,
\[
    Y-X=-d N_u.
\]

Write
\[
    c=\langle N_u,N_v\rangle,
    \qquad
    p_{\alpha i}=\langle X_\alpha,Y_i\rangle,
    \qquad
    \xi_i=\langle N_u,Y_i\rangle.
\]
Differentiating $\Psi$ in the upper variables gives
\[
    0=\Psi_i
    =
    \phi_i+\frac{\langle Y-X,Y_i\rangle}{d^2},
\]
and therefore
\begin{equation}\label{eq:first-distance}
    \phi_i=\frac{\xi_i}{d}.
\end{equation}

The second derivatives are
\[
    \Psi_{\alpha\beta}
    =
    \frac{\delta_{\alpha\beta}-dh^u_{\alpha\beta}}{d^2},
    \qquad
    \Psi_{\alpha j}
    =
    -\frac{p_{\alpha j}}{d^2}.
\]
Differentiating twice in the upper variables gives
\[
    \Psi_{ij}
    =
    \nabla_i\nabla_j\phi
    +\frac{c}{d}h^v_{ij}
    +\frac{\delta_{ij}-2\xi_i\xi_j}{d^2}.
\]
Hence
\begin{equation}\label{eq:block-distance}
    \begin{pmatrix}
        \displaystyle
        d^{-2}(\delta_{\alpha\beta}-dh^u_{\alpha\beta})
        &
        \displaystyle
        -d^{-2}p_{\alpha j}
        \\[2mm]
        \displaystyle
        -d^{-2}p_{\beta i}
        &
        \displaystyle
        \nabla_i\nabla_j\phi
        +d^{-1}ch^v_{ij}
        +d^{-2}(\delta_{ij}-2\xi_i\xi_j)
    \end{pmatrix}
    \geq0.
\end{equation}

Since $N_u$ and $N_v$ are both downward graph normals, $c>-1$.
If $|c|<1$, choose
\[
    X_1
    =
    \frac{N_v-cN_u}{\sqrt{1-c^2}},
    \qquad
    Y_1
    =
    \frac{cN_v-N_u}{\sqrt{1-c^2}},
\]
and complete these vectors by a common orthonormal basis
$X_\alpha=Y_\alpha$, $2\leq\alpha\leq n$, of the common tangent
subspace. If $c=1$, take the same orthonormal basis in the two
tangent spaces. In either case,
\begin{equation}\label{eq:adapted-distance}
\begin{gathered}
    P=(p_{\alpha i})
    =
    \operatorname{diag}(c,1,\ldots,1),\\
    \xi=(-\sqrt{1-c^2},0,\ldots,0),
    \qquad
    \xi\otimes\xi
    =
    (1-c^2)e_1\otimes e_1,\\
    P^{\mathsf T}P=P^2=I_n-\xi\otimes\xi.
\end{gathered}
\end{equation}

Set
\[
    A_u=(h^u_{\alpha\beta}),
    \qquad
    A_v=(h^v_{ij}),
    \qquad
    M=dA_u.
\]
Multiplying \eqref{eq:block-distance} by $d^2$ gives
\[
    \begin{pmatrix}
        I_n-M & -P\\
        -P &
        d^2\nabla^2\phi
        +cdA_v
        +I_n-2\xi\otimes\xi
    \end{pmatrix}
    \geq0.
\]
Hence
\[
    I_n-M\geq0.
\]
Since $A_u\in\Gamma_k$ and $d>0$,
\[
    M\in\Gamma_k,
    \qquad
    F(M)=dF(A_u)=d.
\]

To regularize the possibly singular block $I_n-M$, let
$\varepsilon>0$ and set
\[
    t=1+\varepsilon,
    \qquad
    M_t=\frac{M}{t},
    \qquad
    D_t=M(tI_n-M)^{-1}.
\]
Then
\[
    M_t\in\Gamma_k,
    \qquad
    I_n-M_t>0,
    \qquad
    D_t=M_t(I_n-M_t)^{-1}.
\]
Adding $\varepsilon I_n$ to the upper-left block of the preceding
matrix preserves nonnegativity. Since
\[
    tI_n-M=(I_n-M)+\varepsilon I_n>0,
\]
the Schur complement gives
\[
    d^2\nabla^2\phi
    +cdA_v
    +I_n-2\xi\otimes\xi
    \geq
    P(tI_n-M)^{-1}P.
\]
Using
\[
    (tI_n-M)^{-1}
    =
    \frac1t(I_n+D_t)
\]
and \eqref{eq:adapted-distance}, we obtain
\[
    \nabla^2\phi
    \geq
    -\frac{c}{d}A_v
    +\frac{PD_tP}{td^2}
    +\left(2-\frac1t\right)
       \frac{\xi\otimes\xi}{d^2}
    -\frac{\varepsilon}{td^2}I_n.
\]

Define
\[
    B_t=PD_tP+a\xi\otimes\xi.
\]
Lemma~\ref{compression}, applied to $M_t$, yields
\[
    B_t\in\Gamma_k,
    \qquad
    F(B_t)
    \geq
    \theta F(M_t)
    =
    \frac{\theta d}{t}.
\]
Therefore
\begin{equation}\label{eq:hessian-distance}
    \nabla^2\phi
    \geq
    -\frac{c}{d}A_v
    +\frac{B_t}{td^2}
    +\left(2-\frac{1+a}{t}\right)
       \frac{\xi\otimes\xi}{d^2}
    -\frac{\varepsilon}{td^2}I_n.
\end{equation}

All coefficients $F^{ij}$ below are evaluated at $A_v$.
By homogeneity,
\[
    F^{ij}h^v_{ij}
    =
    F(A_v)
    =
    s.
\]
By concavity,
\[
\begin{aligned}
    F(B_t)
    \leq
    F(A_v)
    +
    F^{ij}\bigl((B_t)_{ij}-h^v_{ij}\bigr)=
    F^{ij}(B_t)_{ij}.
\end{aligned}
\]
Moreover, \eqref{eq:first-distance} gives
\[
    \frac{F^{ij}\xi_i\xi_j}{d^2}
    =
    F^{ij}\phi_i\phi_j
    =
    |\nabla\phi|_F^2.
\]

Contracting \eqref{eq:hessian-distance} with $(F^{ij})$ gives
\begin{align*}
    \Delta_F\phi
    &\geq
    \left(2-\frac{1+a}{t}\right)
    |\nabla\phi|_F^2
    +
    \frac{F^{ij}(B_t)_{ij}}{td^2}
    -
    \frac{cs}{d}
    -
    \frac{\varepsilon}{td^2}\sum_iF^{ii}\\
    &\geq
    \left(2-\frac{1+a}{t}\right)
    |\nabla\phi|_F^2
    +
    \frac{\theta}{t^2d}
    -
    \frac{cs}{d}
    -
    \frac{\varepsilon}{td^2}\sum_iF^{ii}.
\end{align*}
At the fixed contact pair, $d>0$ and $F^{ij}(A_v)$ are independent
of $\varepsilon$. Letting $\varepsilon\downarrow0$, we obtain
\[
    \Delta_F\phi
    \geq
    (1-a)|\nabla\phi|_F^2
    +
    \frac{\theta-cs}{d}.
\]
Since $c\leq1$ and $s\leq\frac{\theta}{2}$,
\[
    \theta-cs
    \geq
    \theta-s
    \geq
    \frac{\theta}{2}.
\]
Thus
\[
    \Delta_F\phi
    \geq
    \gamma|\nabla\phi|_F^2
    +
    \frac{\delta}{d},
\]
where
\[
    \gamma=1-a,
    \qquad
    \delta=\frac{\theta}{2}.
\]
This proves \eqref{3.9}.

If $\phi$ is an upper test function for $b=-\log d$ at $Y$ and
$Y$ has several nearest points on the lower graph, the same
coordinate calculation applies separately to every interior
closest pair.
\end{proof}

\subsection{A positive lower bound at one point}\label{seed-section}
From now on, we take $s=s_0$, where $s_0$ is fixed in
\eqref{3.8}. On the upper graph $\Sigma_v$, define
\[
    T_v^{ij}
    =
    T_{k-1}^{ij}-s^2T_{k-3}^{ij}
    =
    2s\sigma_{k-2}F^{ij}.
\]
All geometric quantities below are computed with respect to the
induced metric on $\Sigma_v$, and we write
$\sigma_j=\sigma_j(\kappa[v])$.

Set
\[
    \Delta_{T_v}\phi=T_v^{ij}\nabla^2_{ij}\phi,
    \qquad
    |\nabla\phi|_{T_v}^2=T_v^{ij}\phi_i\phi_j.
\]
By ellipticity, $(T_v^{ij})$ is positive definite. Since $s$ is
constant and the Newton tensors are divergence-free,
$\nabla_iT_v^{ij}=0$. Moreover, the Newton identities give
\begin{equation}\label{3.10}
    T_v^{ij}g_{ij}
    \leq (n-k+1)\sigma_{k-1},
    \qquad
    T_v^{ij}h_{ij}
    =2s^2\sigma_{k-2}.
\end{equation}

On the level set $F=s$, we have
$\sigma_k=s^2\sigma_{k-2}$. The Newton--Maclaurin inequalities
therefore give
\[
    \sigma_{k-2}\geq c_*(n,k,s)>0.
\]
Multiplying \eqref{3.9} by $2s\sigma_{k-2}$ gives
\[
    \Delta_{T_v}b
    \geq
    \gamma|\nabla b|_{T_v}^2
    +\frac{2s\sigma_{k-2}\delta}{d}.
\]
Set
\[
    \delta_0
    :=
    2s\,c_*(n,k,s)\delta>0.
\]
Since $s=s_0$ depends only on $n$ and $k$,
$\delta_0=\delta_0(n,k)>0$. Hence, in the upper-support sense,
\begin{equation}\label{3.11}
    \Delta_{T_v}b
    \geq
    \gamma|\nabla b|_{T_v}^2
    +\frac{\delta_0}{d}.
\end{equation}

We next establish the local curvature integral bounds needed below.
Suppose that $|Dv|\leq K$ on $B_{r_2}$, and let
$0\leq\zeta\in C_c^\infty(B_{r_2})$, identified with its lift to
$\Sigma_v$. For $1\leq j\leq k$, the divergence-free property of
$T_{j-1}$ and the identity
$T_{j-1}^{ab}v_{ab}=\frac{j\sigma_j}{W_v}$ give
\[
\begin{aligned}
    j\int_{\Sigma_v}\frac{\zeta\sigma_j}{W_v}\,d\mu=
    -\int_{\Sigma_v}T_{j-1}^{ab}\zeta_a v_b\,d\mu\leq
    C(n)\int_{\Sigma_v}|\nabla\zeta|\sigma_{j-1}\,d\mu.
\end{aligned}
\]
Here we used the positivity of $T_{j-1}$,
$T_{j-1}^{ab}g_{ab}=(n-j+1)\sigma_{j-1}$, and
$|\nabla v|\leq1$. Since
\[
    \frac1{W_v}\geq\frac1{\sqrt{1+K^2}},
    \qquad
    d\mu=W_v\,dx,
\]
an iteration with nested cutoffs, starting from the area estimate
for $j=0$, yields
\begin{equation}\label{3.12}
    \int_{X_v(B_{r_1})}\sigma_j\,d\mu
    \leq C(n,j,K,r_1,r_2),
    \qquad
    0<r_1<r_2,\quad 1\leq j\leq k.
\end{equation}

Set
\[
    R_0=\frac{3R}{4},
    \qquad
    \Omega_v=X_v\left(B_{\frac{R_0}{16}}\right).
\]
The interior gradient bound for $v$ is available on
$B_{\frac{R_0}{8}}$. If
\[
    \max_{\overline{B}_{\frac{R_0}{16}}}d
    \geq\frac{R_0}{2},
\]
the required pointwise lower bound is immediate. We may therefore
assume
\[
    \max_{\overline{B}_{\frac{R_0}{16}}}d
    <\frac{R_0}{2}.
\]
Under this assumption, every nearest point in $E_u$ associated with
$y\in\overline{B}_{\frac{R_0}{16}}$ lies on the graph of $u$ over
an interior base point, since the horizontal distance to the
lateral boundary is at least $\frac{15R_0}{16}$.

On $\Omega_v$, consider
\[
    \begin{cases}
        -\Delta_{T_v}Q=1
            & \text{in }\Omega_v,\\
        Q=0
            & \text{on }\partial\Omega_v.
    \end{cases}
\]
For each fixed smooth graph, $T_v$ is uniformly elliptic on
$\overline{\Omega_v}$. Thus the problem has a classical solution,
and $Q>0$ in $\Omega_v$. No uniform bound for the ellipticity ratio
will be used.

Choose
\[
    0\leq\zeta\in C_c^\infty\left(B_{\frac{R_0}{16}}\right),
    \qquad
    \zeta=1
    \quad\text{on }B_{\frac{R_0}{32}},
\]
and lift $\zeta$ to $\Omega_v$. Since $\nabla_iT_v^{ij}=0$,
integration by parts gives
\[
    \int_{\Omega_v}\zeta\,d\mu
    =
    -\int_{\Omega_v}Q\,\Delta_{T_v}\zeta\,d\mu.
\]
Hence
\[
\begin{aligned}
    \left|B_{\frac{R_0}{32}}\right|
    \leq
    \int_{\Omega_v}\zeta\,d\mu\leq
    \left(\max_{\overline{\Omega_v}}Q\right)
    \int_{\Omega_v}|\Delta_{T_v}\zeta|\,d\mu.
\end{aligned}
\]
For a cutoff depending only on the base variables,
\[
    |\Delta_{T_v}\zeta|
    \leq
    |D^2\zeta|\,T_v^{ij}g_{ij}
    +
    |D\zeta|\,|T_v^{ij}h_{ij}|.
\]
By \eqref{3.10}, \eqref{3.12}, and the area bound,
\[
    \int_{\Omega_v}|\Delta_{T_v}\zeta|\,d\mu\leq C.
\]
Therefore
\[
    \max_{\overline{\Omega_v}}Q\geq c>0,
\]
where the constant is independent of any ellipticity-ratio bound.

Set
\[
    M_d=\max_{\overline{\Omega_v}}d,
    \qquad
    p=\frac{1}{2}\min\{\gamma,1\},
    \qquad
    \alpha_0=\frac{p\delta_0}{2}M_d^{p-1}.
\]
Since $0<p<1$, \eqref{3.11} implies, in the corresponding
lower-support sense,
\[
\begin{aligned}
    \Delta_{T_v}(d^p)
    &\leq
    p(p-\gamma)d^p|\nabla b|_{T_v}^2
    -p\delta_0d^{p-1}\\
    &\leq
    -p\delta_0d^{p-1}\\
    &\leq
    -p\delta_0M_d^{p-1}
    =
    -2\alpha_0.
\end{aligned}
\]
We claim that
\[
    d^p\geq\alpha_0Q
    \qquad\text{in }\Omega_v.
\]

Suppose otherwise. Since $Q=0$ on $\partial\Omega_v$,
$d^p-\alpha_0Q$ attains a negative minimum at some
$P_*\in\Omega_v$. Set
\[
    m=d(P_*)^p-\alpha_0Q(P_*)<0,
    \qquad
    \psi=\alpha_0Q+m.
\]
Then $\psi\leq d^p$ and
$\psi(P_*)=d(P_*)^p>0$. Hence $\psi>0$ near $P_*$ and
\[
    \phi=-\frac1p\log\psi
\]
is a smooth upper support for $b=-\log d$ at $P_*$. Applying
\eqref{3.11} to $\phi$, and using
$\psi(P_*)=d(P_*)^p$, gives
\[
\begin{aligned}
    \Delta_{T_v}\psi
    &=
    -p\psi\,\Delta_{T_v}\phi
    +p^2\psi|\nabla\phi|_{T_v}^2\\
    &\leq
    p(p-\gamma)\psi|\nabla\phi|_{T_v}^2
    -p\delta_0\frac{\psi}{d}\\
    &\leq
    -p\delta_0d^{p-1}\leq
    -2\alpha_0.
\end{aligned}
\]
On the other hand,
\[
    \Delta_{T_v}\psi
    =
    \alpha_0\Delta_{T_v}Q
    =
    -\alpha_0,
\]
a contradiction. Thus the claim holds.

At a maximum point of $Q$,
\[
    M_d^p
    \geq
    \alpha_0\max_{\overline{\Omega_v}}Q
    =
    \frac{p\delta_0}{2}
    M_d^{p-1}\max_{\overline{\Omega_v}}Q.
\]
Therefore
\[
    M_d
    \geq
    \frac{p\delta_0}{2}
    \max_{\overline{\Omega_v}}Q
    \geq c_0>0.
\]

Choose $y_0\in\overline{B}_{\frac{R_0}{16}}$ with
$d(y_0)=M_d$. By Lemma~\ref{lem:distance-gap}, $d$ is
$L$-Lipschitz on the relevant interior ball, where
$L=\sqrt{1+K^2}$. Set
\[
    d_*=\frac{c_0}{2},
    \qquad
    r_*=
    \min\left\{
        \frac{R_0}{64},
        \frac{c_0}{2L},
        1
    \right\}.
\]
Then
\[
    d(y)\geq d_*
    \qquad
    \text{for }y\in B_{r_*}(y_0).
\]
Both $d_*$ and $r_*$ have positive lower bounds depending only on
$n,k,R,K$.

\subsection{The doubling argument}\label{doubling-section}

\subsubsection*{Normalized linearization.}

We establish the directional bounds for the normalized
linearization needed in the propagation argument.

\begin{lemma}\label{normalized-linearization}
Let $3\leq k\leq n$ and let $\lambda\in\Gamma_k$ satisfy
$\sigma_k(\lambda)=\sigma_{k-2}(\lambda)$. Set
\[
    \widehat\lambda=(\lambda,1)\in\mathbb{R}^{n+1},
    \qquad
    \widehat F=\frac{\sigma_k}{\sigma_{k-1}}.
\]
Then $\widehat\lambda\in\Gamma_k$ and
$\widehat F(\widehat\lambda)=1$. For $1\leq i\leq n$, the coefficients
\begin{equation}\label{3.13}
    a_i=\widehat F_i(\widehat\lambda)
    =
    \frac{\sigma_{k-1}(\lambda|i)-\sigma_{k-3}(\lambda|i)}
         {\sigma_{k-1}(\lambda)+\sigma_{k-2}(\lambda)}
\end{equation}
are positive and satisfy
\begin{equation}\label{3.14}
\begin{gathered}
    \sum_{i=1}^n a_i\leq n-k+2,
    \qquad
    a_i(1+|\lambda_i|)^2\geq c(n,k),
    \\
    \lambda_i\leq0
    \quad\Longrightarrow\quad
    a_i\geq c(n,k).
\end{gathered}
\end{equation}
\end{lemma}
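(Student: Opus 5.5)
The plan is to reduce everything to elementary symmetric functions of $\lambda$ and $\lambda|i$ through the splitting identity $\sigma_r(\widehat\lambda)=\sigma_r(\lambda)+\sigma_{r-1}(\lambda)$. For $1\le r\le k$ both terms are positive because $\lambda\in\Gamma_k$, so $\widehat\lambda\in\Gamma_k$. Then
\[
\widehat F(\widehat\lambda)=\frac{\sigma_k+\sigma_{k-1}}{\sigma_{k-1}+\sigma_{k-2}}=1
\]
by $\sigma_k=\sigma_{k-2}$. For $i\le n$ we have $\partial_i\sigma_r(\widehat\lambda)=\sigma_{r-1}(\widehat\lambda|i)=\sigma_{r-1}(\lambda|i)+\sigma_{r-2}(\lambda|i)$. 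Hence
\[
\widehat F_i=\frac{\sigma_{k-1}(\widehat\lambda|i)-\widehat F\,\sigma_{k-2}(\widehat\lambda|i)}{\sigma_{k-1}(\widehat\lambda)},
\]
and with $\widehat F=1$ the $\sigma_{k-2}(\lambda|i)$ terms cancel, which gives \eqref{3.13}. Positivity of $a_i$ follows from the ellipticity of $\sigma_k/\sigma_{k-1}$ on $\Gamma_k$ in $n+1$ variables. For the trace bound we sum $\widehat F_i$ over all $n+1$ indices and use $\sum_i\sigma_{r-1}(\widehat\lambda|i)=(n+2-r)\sigma_{r-1}(\widehat\lambda)$:
\[
\sum_{i=1}^{n+1}\widehat F_i=(n-k+2)-(n-k+3)\frac{\sigma_{k-2}(\widehat\lambda)}{\sigma_{k-1}(\widehat\lambda)}\le n-k+2.
\]
Dropping the positive term $\widehat F_{n+1}$ then gives $\sum_{i\le n}a_i\le n-k+2$.

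For $\lambda_i\le0$ we repeat the argument of \eqref{2.3}. Write $A,b,C,e$ for $\sigma_{k-1},\sigma_{k-2},\sigma_{k-3},\sigma_k$ of $\lambda|i$. All four are positive: $A,b,C$ because $\lambda|i\in\Gamma_{k-1}$, and $e$ because $e=\sigma_k(\lambda)-\lambda_iA>0$. The equation reads $e+\lambda_iA=b+\lambda_iC$. The computation in \eqref{2.3}, with $F^2=1$, then gives $b\le e$. Newton's inequality $eC\le\vartheta Ab$ yields $C\le\vartheta A$, so the numerator of $a_i$ is at least $(1-\vartheta)A$. In the denominator $\sigma_{k-1}+\sigma_{k-2}=A+b+\lambda_i(b+C)\le A+b$. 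Combining $eb\le C(n,k)A^2$ (Newton) with $b\le e$ gives $b^2\le CA^2$, so the denominator is at most $CA$. Hence $a_i\ge c(n,k)$, which also covers the weighted bound when $\lambda_i\le0$.

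The main obstacle is the weighted bound $a_i(1+\lambda_i)^2\ge c$ for $\lambda_i>0$. My first attempt would use the quotient-derivative formula
\[
\widehat F_i=\frac{\sigma_{k-1}(\widehat\lambda|i)^2-\sigma_k(\widehat\lambda|i)\sigma_{k-2}(\widehat\lambda|i)}{\sigma_{k-1}(\widehat\lambda)^2}.
\]
When $\sigma_k(\widehat\lambda|i)>0$, Newton's inequality in $n$ variables bounds the numerator below by $(1-\vartheta')\sigma_{k-1}(\widehat\lambda|i)^2$. When $\sigma_k(\widehat\lambda|i)\le0$ the numerator is trivially at least $\sigma_{k-1}(\widehat\lambda|i)^2$. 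The denominator expands as
\[
\sigma_{k-1}(\widehat\lambda)=\sigma_{k-1}(\widehat\lambda|i)+\lambda_i\sigma_{k-2}(\widehat\lambda|i).
\]
It therefore suffices to prove $\sigma_{k-2}(\widehat\lambda|i)\le C(n,k)\,\sigma_{k-1}(\widehat\lambda|i)$, since then $a_i\ge c/(1+\lambda_i)^2$.

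To get this ratio bound I would use the normalized equation $\sigma_k(\widehat\lambda)=\sigma_{k-1}(\widehat\lambda)$, which expands to
\[
\sigma_k(\widehat\lambda|i)+\lambda_i\sigma_{k-1}(\widehat\lambda|i)=\sigma_{k-1}(\widehat\lambda|i)+\lambda_i\sigma_{k-2}(\widehat\lambda|i),
\]
together with the fact that $\widehat\lambda|i\in\Gamma_{k-1}$ contains the entry $1$. Rearranging gives $\lambda_i(\sigma_{k-2}-\sigma_{k-1})(\widehat\lambda|i)=(\sigma_k-\sigma_{k-1})(\widehat\lambda|i)$. The plan is to estimate the right side by Newton–Maclaurin relative to $\sigma_{k-1}(\widehat\lambda|i)$, and so deduce $\sigma_{k-2}(\widehat\lambda|i)\le C\sigma_{k-1}(\widehat\lambda|i)$ once $\lambda_i$ is bounded below. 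If $\lambda_i$ is small, the weight $(1+\lambda_i)^2$ is bounded, and the required bound $a_i\ge c$ should instead follow as in the $\lambda_i\le0$ case, with $|\lambda_i|\le1$ absorbed into the constants. I expect the delicate point to be this dichotomy: balancing the two regimes of $\lambda_i$ so that all constants depend only on $n$ and $k$.
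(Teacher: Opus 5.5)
Your handling of $\widehat\lambda\in\Gamma_k$, $\widehat F(\widehat\lambda)=1$, formula \eqref{3.13}, the trace bound, and the case $\lambda_i\le0$ is correct. The last uses the Newton manipulations from \eqref{2.3}, which works but is longer than necessary.

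The gap is the weighted bound for $\lambda_i>0$. You correctly reduce it to $\sigma_{k-2}(\widehat\lambda|i)\le C\,\sigma_{k-1}(\widehat\lambda|i)$, but you never prove this, and the sketched route does not close as stated. Newton's inequality relates $\sigma_k(\widehat\lambda|i)/\sigma_{k-1}(\widehat\lambda|i)$ only to $\sigma_{k-1}(\widehat\lambda|i)/\sigma_{k-2}(\widehat\lambda|i)$. That is the very ratio you are trying to control, not an absolute constant. The plan of dividing by $\lambda_i$ would also lose uniformity as $\lambda_i\to0$. The fallback for small $\lambda_i>0$ ``as in the $\lambda_i\le0$ case'' fails outright. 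That argument uses $\lambda_i\le0$ three times, in your notation for $\lambda|i$: to get $e>0$, to orient the inequality that yields $b\le e$, and to bound the denominator by $A+b$. None of these survives for $\lambda_i>0$.

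The missing observation is already in your first paragraph. Since $\widehat F=1$, your derivative formula reads
\[
    a_i\,\sigma_{k-1}(\widehat\lambda)
    =
    \sigma_{k-1}(\widehat\lambda|i)-\sigma_{k-2}(\widehat\lambda|i).
\]
Write $A_i=\sigma_k(\widehat\lambda|i)$, $B_i=\sigma_{k-1}(\widehat\lambda|i)$, $C_i=\sigma_{k-2}(\widehat\lambda|i)$. Then $a_i>0$ and $\sigma_{k-1}(\widehat\lambda)>0$ give $C_i<B_i$. So the ratio bound holds with constant $1$, for every $i$, with no dichotomy in $\lambda_i$.

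It follows that $B_i/(B_i+\lambda_iC_i)\ge1/(1+\lambda_i)$ when $\lambda_i\ge0$, and $\ge1$ when $\lambda_i\le0$. Combined with your Newton bound $B_i^2-A_iC_i\ge c_0B_i^2$, this yields both directional estimates in \eqref{3.14} at once. This is exactly the paper's proof. The paper also uses that same Newton bound to obtain $a_i>0$, rather than quoting ellipticity.

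Your rearranged identity could also have been closed directly. If $C_i\ge B_i$, Newton gives $A_i\le\vartheta' B_i^2/C_i<B_i$. Then $\lambda_i(C_i-B_i)=A_i-B_i<0$, which is impossible for $\lambda_i>0$.
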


\begin{proof}
For $1\leq r\leq k$,
\[
    \sigma_r(\widehat\lambda)
    =
    \sigma_r(\lambda)+\sigma_{r-1}(\lambda)>0.
\]
Thus $\widehat\lambda\in\Gamma_k$, and
\[
    \widehat F(\widehat\lambda)
    =
    \frac{\sigma_k(\lambda)+\sigma_{k-1}(\lambda)}
         {\sigma_{k-1}(\lambda)+\sigma_{k-2}(\lambda)}
    =1.
\]
Differentiating gives
\[
\begin{aligned}
    \widehat F_i(\widehat\lambda)
    =
    \frac{\sigma_{k-1}(\widehat\lambda|i)
          -\sigma_{k-2}(\widehat\lambda|i)}
         {\sigma_{k-1}(\widehat\lambda)}
    =
    \frac{\sigma_{k-1}(\lambda|i)-\sigma_{k-3}(\lambda|i)}
         {\sigma_{k-1}(\lambda)+\sigma_{k-2}(\lambda)},
    \qquad
    1\leq i\leq n,
\end{aligned}
\]
which proves \eqref{3.13}.

To estimate these derivatives, fix $1\leq\alpha\leq n+1$ and set
\[
    A=\sigma_k(\widehat\lambda|\alpha),
    \qquad
    B=\sigma_{k-1}(\widehat\lambda|\alpha),
    \qquad
    C=\sigma_{k-2}(\widehat\lambda|\alpha),
    \qquad
    \tau=\widehat\lambda_\alpha.
\]
Since $\widehat\lambda|\alpha\in\Gamma_{k-1}$, we have $B,C>0$.
Newton's inequality in dimension $n$ gives
\[
    AC
    \leq
    \frac{\binom{n}{k}\binom{n}{k-2}}
         {\binom{n}{k-1}^2}B^2
    =
    \frac{(k-1)(n-k+1)}
         {k(n-k+2)}B^2.
\]
If $A\leq0$, the same estimate is immediate. Hence
\[
    B^2-AC\geq c_0B^2,
    \qquad
    c_0=\frac{n+1}{k(n-k+2)}>0.
\]
Using
\[
    \sigma_k(\widehat\lambda)=A+\tau B,
    \qquad
    \sigma_{k-1}(\widehat\lambda)=B+\tau C,
\]
we obtain
\[
    \widehat F_\alpha(\widehat\lambda)
    =
    \frac{B^2-AC}{(B+\tau C)^2}
    \geq
    c_0
    \left(
        \frac{B}{B+\tau C}
    \right)^2
    >0.
\]
Since $\widehat F(\widehat\lambda)=1$, the same derivative can also
be written as
\[
    \widehat F_\alpha(\widehat\lambda)
    =
    \frac{B-C}{B+\tau C}.
\]
Its denominator is positive, so $B>C>0$.

For $\alpha=i\leq n$, if $\lambda_i\geq0$, then
\[
    \frac{B}{B+\lambda_iC}
    =
    \frac{1}{1+\lambda_i\frac{C}{B}}
    \geq
    \frac{1}{1+\lambda_i}.
\]
If $\lambda_i\leq0$, then
\[
    0<B+\lambda_iC\leq B,
    \qquad
    \frac{B}{B+\lambda_iC}\geq1.
\]
These estimates prove both directional bounds in \eqref{3.14},
with $c(n,k)=c_0$.

Finally, summing the derivatives in all $n+1$ variables gives
\[
\begin{aligned}
    \sum_{\alpha=1}^{n+1}
    \widehat F_\alpha(\widehat\lambda)
    &=
    (n-k+2)
    -(n-k+3)
    \frac{
        \sigma_k(\widehat\lambda)
        \sigma_{k-2}(\widehat\lambda)
    }{
        \sigma_{k-1}(\widehat\lambda)^2
    }
    \\
    &\leq n-k+2.
\end{aligned}
\]
Every summand is positive, so the same upper bound holds for
$\sum_{i=1}^n a_i$.
\end{proof}

Apply the lemma to
\[
    \lambda=\frac{\kappa[v]}{s}.
\]
Since $F$ is homogeneous of degree one and $F(\lambda)=1$,
\[
    F_i(\kappa[v])
    =
    F_i(\lambda)
    =
    \frac{
        \sigma_{k-1}(\lambda|i)-\sigma_{k-3}(\lambda|i)
    }{
        2\sigma_{k-2}(\lambda)
    }.
\]
Define the tensor $a^{ij}$ on $\Sigma_v$ by
\[
    a^{ij}=\chi F^{ij},
    \qquad
    \chi=
    \frac{2\sigma_{k-2}(\lambda)}
         {\sigma_{k-1}(\lambda)+\sigma_{k-2}(\lambda)}>0,
\]
where $F^{ij}$ is evaluated on the upper graph. In a principal
orthonormal frame, $a^{ij}=a_i\delta_{ij}$, with $a_i$ as in
\eqref{3.13}. Homogeneity and differentiation of the constant
curvature equation give
\begin{equation}\label{3.15}
\begin{aligned}
    m:=\sum_{i=1}^n a_i\kappa_i
    &=
    \chi s
    =
    \frac{2s\sigma_{k-2}(\lambda)}
         {\sigma_{k-1}(\lambda)+\sigma_{k-2}(\lambda)},
    \qquad
    0<m\leq2s,
    \\
    \sum_{i=1}^n a_i h_{iij}
    &=
    \chi\sum_{i=1}^nF_i h_{iij}
    =0,
    \qquad
    1\leq j\leq n.
\end{aligned}
\end{equation}
Here $h$ denotes the second fundamental form of $\Sigma_v$.
Moreover,
\[
    (1+|\lambda_i|)^2
    =
    \frac{(s+|\kappa_i|)^2}{s^2}
    \leq
    \frac{1+s^2}{s^2}(1+\kappa_i^2),
\]
so Lemma~\ref{normalized-linearization} gives
\[
    a_i(1+\kappa_i^2)
    \geq
    \frac{c_0s^2}{1+s^2}.
\]
Since $s=s_0(n,k)$ is fixed, we obtain
\begin{equation}\label{3.16}
\begin{gathered}
    a^{ij}g_{ij}\leq C,
    \qquad
    0<a^{ij}h_{ij}=m\leq C,
    \\
    a_i(1+\kappa_i^2)\geq c,
    \qquad
    \kappa_i\leq0
    \quad\Longrightarrow\quad
    a_i\geq c.
\end{gathered}
\end{equation}
We write
\[
    \Delta_a\phi=a^{ij}\nabla^2_{ij}\phi,
    \qquad
    |\nabla\phi|_a^2=a^{ij}\phi_i\phi_j,
    \qquad
    \mathcal E_a=\sum_{i=1}^n a_i\kappa_i^2.
\]

The factor $\chi$ need not have a uniform positive lower bound.
This is why the initial separation estimate used the unnormalized
tensor $T_v$. For propagation, only the gradient term is needed.
Indeed, multiplying the upper-test inequality \eqref{3.9} by $\chi$
gives
\[
    \Delta_a\phi
    \geq
    \gamma|\nabla\phi|_a^2
    +\frac{\chi\delta}{d}
    \geq
    \gamma|\nabla\phi|_a^2.
\]
Thus, in the same upper-support sense at points with an interior
nearest lower point,
\begin{equation}\label{3.17}
    \Delta_a b
    \geq
    \gamma|\nabla b|_a^2,
    \qquad
    b=-\log d.
\end{equation}

\subsubsection*{The geometric auxiliary function.}

Let $y_0,d_*,r_*$ be given by the initial separation estimate, so that
\[
    y_0\in\overline{B}_{\frac{R_0}{16}},
    \qquad
    d\geq d_*
    \quad\text{in }B_{r_*}(y_0),
    \qquad
    0<r_*
    \leq
    \min\left\{
        \frac{R_0}{64},1
    \right\}.
\]
Both $d_*$ and $r_*$ have positive lower bounds depending only on
$n,k,R,K$. Retain the bound $|Dv|\leq K$ on
$B_{\frac{7R_0}{8}}$, and set
\[
    \ell=\frac{3R_0}{4},
    \qquad
    \mathcal B=B_\ell(y_0)
    \subset
    B_{\frac{13R_0}{16}}
    \Subset
    B_{\frac{7R_0}{8}}.
\]
All constants below depend only on $n,k,R,K$.

Following \cite{QiuYan}, define
\[
    z=y-y_0,
    \qquad
    r=|z|,
    \qquad
    W=W_v=\frac{1}{\langle N_v,E_{n+1}\rangle},
    \qquad
    W_0=\sqrt{1+K^2}.
\]
At a fixed point with $r>0$, choose a principal orthonormal frame
$\{e_i\}_{i=1}^n$ on $\Sigma_v$, and set
\[
    R_i=\langle(z,0),e_i\rangle,
    \qquad
    t_i=\langle e_i,E_{n+1}\rangle,
    \qquad
    \omega=\langle(z,0),N_v\rangle,
\]
and
\[
    Z_i
    =
    W\bigl[R_i+Wt_i(r-\omega)\bigr].
\]
Define
\[
    \Phi
    =
    -W\langle X_v(y)-X_v(y_0),N_v\rangle
    +r(W-1).
\]
Since $E_{n+1}=(0,\ldots,0,-1)$ and
$W\omega=z\cdot Dv$, we may also write
\[
\begin{aligned}
    \Phi
    &=
    v(y)-v(y_0)+W(r-\omega)-r
    \\
    &=
    v(y)-v(y_0)-z\cdot Dv+r(W-1).
\end{aligned}
\]
Thus $\Phi(y_0)=0$ and $|\Phi|\leq C(K)r$ on $\mathcal B$.
The function is continuous at $y_0$ and smooth away from $y_0$;
all differential calculations below are made at points with $r>0$.

The graph identities give
\[
\begin{gathered}
    v_i=-t_i,
    \qquad
    \nabla_jt_i=-\frac{h_{ij}}{W},
    \qquad
    \nabla_jR_i
    =
    \delta_{ij}-t_it_j-h_{ij}\omega,
    \\
    r_i=\frac{R_i}{r},
    \qquad
    W_i=-W^2\kappa_it_i,
    \qquad
    \omega_i=\kappa_iR_i-\frac{t_i}{W}.
\end{gathered}
\]
Differentiating $\Phi$ gives
\begin{equation}\label{3.18}
\begin{aligned}
    \Phi_i
    &=
    -t_i+(r-\omega)W_i+(W-1)r_i-W\omega_i
    \\
    &=
    \frac{W-1}{r}R_i
    -\kappa_i\bigl[WR_i+W^2t_i(r-\omega)\bigr]
    \\
    &=
    \frac{W-1}{r}R_i-\kappa_iZ_i.
\end{aligned}
\end{equation}

To compute its weighted Laplacian, the Codazzi equations and
\eqref{3.15} give
\[
\begin{aligned}
    \Delta_a\!\left(\frac1W\right)
    &=
    \sum_{i,j=1}^n a_i h_{iij}t_j
    -\frac{\mathcal E_a}{W}
    =
    -\frac{\mathcal E_a}{W},
    \\
    \Delta_aW
    &=
    W\mathcal E_a
    +
    2W^3\sum_i a_i\kappa_i^2t_i^2,
    \qquad
    \Delta_av=\frac{m}{W}.
\end{aligned}
\]
Similarly,
\[
\begin{aligned}
    \Delta_a\omega
    &=
    \sum_{i,j=1}^n a_i h_{iij}R_j
    +
    \sum_i a_i\kappa_i
    \left(
        1+\frac1{W^2}-2t_i^2
    \right)
    -\omega\mathcal E_a
    \\
    &=
    \left(
        1+\frac1{W^2}
    \right)m
    -
    2\sum_i a_i\kappa_it_i^2
    -\omega\mathcal E_a,
    \\
    \Delta_ar
    &=
    \frac1r
    \left[
        \sum_i a_i
        \left(
            1-t_i^2-\frac{R_i^2}{r^2}
        \right)
        -m\omega
    \right].
\end{aligned}
\]
The product rule yields
\[
\begin{aligned}
    \Delta_a\Phi
    &=
    \frac{m}{W}
    +(r-\omega)\Delta_aW
    +(W-1)\Delta_ar
    -W\Delta_a\omega
    +2\sum_i a_iW_i(r_i-\omega_i).
\end{aligned}
\]
Substituting the preceding identities and canceling the terms linear
in $\sum_i a_i\kappa_it_i^2$, we obtain
\begin{equation}\label{3.19}
\begin{aligned}
    \Delta_a\Phi
    ={}&
    \sum_i a_i\kappa_i^2
    \left\{
        rW(1+2W^2t_i^2)
        +2W^2t_i(R_i-Wt_i\omega)
    \right\}
    \\
    &-
    \frac{2W^2}{r}
    \sum_i a_i\kappa_it_iR_i
    -mW
    \\
    &+
    \frac{W-1}{r}
    \left[
        \sum_i a_i
        \left(
            1-t_i^2-\frac{R_i^2}{r^2}
        \right)
        -m\omega
    \right].
\end{aligned}
\end{equation}
Only the contracted derivative identity in \eqref{3.15} has been
used; no derivative of $a^{ij}$ occurs.

We next estimate the coefficient in braces. Since
\[
    \omega=\frac{z\cdot Dv}{W},
    \qquad
    \sum_iR_i^2=r^2-\omega^2,
\]
we have $r-\omega>0$ and
$R_i^2\leq r^2-\omega^2$. Completing the square gives
\[
\begin{aligned}
    &rW(1+2W^2t_i^2)
    +2W^2t_i(R_i-Wt_i\omega)
    \\
    &\quad=
    2W^3(r-\omega)
    \left(
        t_i+\frac{R_i}{2W(r-\omega)}
    \right)^2
    +
    W\left(
        r-\frac{R_i^2}{2(r-\omega)}
    \right)
    \\
    &\quad\geq
    W\left(
        r-\frac{r^2-\omega^2}{2(r-\omega)}
    \right)
    =
    \frac{W(r-\omega)}2.
\end{aligned}
\]
Moreover,
\[
    W(r-\omega)
    \geq
    r(W-|Dv|)
    =
    \frac{r}{W+|Dv|}
    \geq
    \frac{r}{2W}.
\]
Thus the coefficient is at least $\frac{r}{4W}$. Young's inequality gives
\[
\begin{aligned}
    \frac{2W^2}{r}
    \sum_i a_i|\kappa_it_iR_i|
    &\leq
    \frac{r}{8W}\mathcal E_a
    +
    \frac{8W^5}{r^3}
    \sum_i a_it_i^2R_i^2
    \\
    &\leq
    \frac{r}{8W}\mathcal E_a
    +
    \frac{C(K)}{r}\sum_i a_i.
\end{aligned}
\]
The ambient vectors $E_{n+1}$ and $\frac{(z,0)}{r}$ are orthonormal.
Consequently,
\[
    t_i^2+\frac{R_i^2}{r^2}\leq1.
\]
Using this fact, \eqref{3.16}, and $|\omega|\leq r\leq\ell$,
we deduce from \eqref{3.19} that
\[
    \Delta_a\Phi
    \geq
    \frac{r}{8W}\mathcal E_a
    -\frac{C}{r}
    -Cm.
\]
Since $m\leq C$ and $r\leq\ell$, the last constant term can be
absorbed into $\frac{C}{r}$. Therefore
\begin{equation}\label{3.20}
    \Delta_a\Phi
    \geq
    cr\mathcal E_a-\frac{C}{r}.
\end{equation}

For clarity, write
$\mathbf R=(R_1,\ldots,R_n)$,
$\mathbf Z=(Z_1,\ldots,Z_n)$, and
$\mathbf t=(t_1,\ldots,t_n)$. Orthogonal decomposition in the
ambient space gives
\[
    |\mathbf R|^2=r^2-\omega^2,
    \qquad
    |\mathbf t|^2=1-\frac1{W^2},
    \qquad
    \mathbf R\cdot\mathbf t=-\frac{\omega}{W}.
\]
It follows that
\[
    |\mathbf Z|
    \leq
    W|\mathbf R|
    +W^2(r-\omega)|\mathbf t|
    \leq
    C(K)r,
\]
and
\[
\begin{aligned}
    \mathbf R\cdot\mathbf Z
    &=
    W(r^2-\omega^2)
    +W^2(r-\omega)
    \left(
        -\frac{\omega}{W}
    \right)
    \\
    &=
    Wr(r-\omega)
    =
    r^2W-rz\cdot Dv
    \\
    &\geq
    \frac{r^2}{W+|Dv|}
    \geq
    \frac{r^2}{\sqrt{1+K^2}+K}.
\end{aligned}
\]
Thus, with
$c_Z=(\sqrt{1+K^2}+K)^{-1}$,
\begin{equation}\label{3.21}
    |\mathbf R|\leq r,
    \qquad
    |\mathbf Z|\leq C(K)r,
    \qquad
    \mathbf R\cdot\mathbf Z\geq c_Zr^2.
\end{equation}

\subsubsection*{Propagation of the lower bound.}

On $\mathcal B=B_\ell(y_0)$, put
\[
    \rho=\ell^2-r^2
    =
    \left(
        \frac{3R_0}{4}
    \right)^2-r^2.
\]
Initially choose $\beta$ and $S$ so that
\[
    0<\beta
    \leq
    r_*
    \min\left\{
        1,
        \frac{2}{W_0\ell^2}
    \right\},
\]
and
\[
    S>
    \max\left\{
        1,
        -\log d_*,
        \frac{2}{\gamma},
        \log\left(
            \frac{16}{3R_0}
        \right)
    \right\}.
\]
Their final values will be fixed below using only $n,k,R,K$.
Consider
\[
    \mathcal Q(y)
    =
    \rho(y)^2e^{\beta\Phi(y)}
    \max\{b(y),S\}.
\]
This function is continuous on $\overline{\mathcal B}$, positive
in $\mathcal B$, and zero on its boundary. Let $\bar y$ be an
interior maximum point. Equivalently, $\bar y$ maximizes
\[
    2\log\rho+\beta\Phi+\log\max\{b,S\}.
\]

If $b(\bar y)\leq S$, then
\[
    \max_{\overline{\mathcal B}}\mathcal Q
    \leq
    \ell^4
    e^{\beta\|\Phi\|_{L^\infty(\mathcal B)}}S.
\]
This already gives the required bound on any region where $\rho$
has a fixed positive lower bound.

We may therefore assume $b(\bar y)>S$. Since
$b\leq-\log d_*$ on $B_{r_*}(y_0)$,
\[
    r=|\bar y-y_0|\geq r_*>0.
\]
All differential identities for $\Phi$ are therefore valid near
$\bar y$. Moreover,
\[
    d(\bar y)
    =
    e^{-b(\bar y)}
    <
    e^{-S}
    <
    \frac{3R_0}{16},
\]
whereas
\[
    \operatorname{dist}
    (\bar y,\partial B_{R_0})
    \geq
    R_0-|\bar y|
    >
    \frac{3R_0}{16}.
\]
Thus every nearest lower point associated with $\bar y$ is interior,
and \eqref{3.17} is applicable there.

The maximality of $\mathcal Q$ gives, near $\bar y$,
\[
    b(y)
    \leq
    \max\{b(y),S\}
    \leq
    b(\bar y)
    \frac{\rho(\bar y)^2}{\rho(y)^2}
    e^{\beta(\Phi(\bar y)-\Phi(y))}.
\]
The right-hand side defines a smooth positive upper test
\[
    \widetilde b(y)
    =
    b(\bar y)
    \frac{\rho(\bar y)^2}{\rho(y)^2}
    e^{\beta(\Phi(\bar y)-\Phi(y))}
\]
for $b$ at $\bar y$. Henceforth, derivatives of $b$ at $\bar y$
mean derivatives of this upper test. By construction,
\[
    2\log\rho+\beta\Phi+\log\widetilde b
\]
is constant near $\bar y$.

All quantities below are evaluated at $\bar y$ in a principal
orthonormal frame. Since $\rho_i=-2R_i$, differentiation and
\eqref{3.18} give
\[
    0
    =
    -\frac{4R_i}{\rho}
    +\beta\Phi_i
    +\frac{b_i}{b},
\]
and hence
\[
    \frac{b_i}{b}
    =
    \mathcal A R_i+\beta\kappa_iZ_i,
    \qquad
    \mathcal A
    =
    \frac4\rho
    -\frac{\beta(W-1)}r.
\]
The preliminary restriction on $\beta$ gives
\[
    0
    \leq
    \frac{\beta(W-1)}r
    \leq
    \frac2{\ell^2}
    \leq
    \frac2\rho,
\]
and therefore
\[
    \frac2\rho
    \leq
    \mathcal A
    \leq
    \frac4\rho.
\]
Set
\[
    \mathcal S
    =
    \sum_i
    a_i
    \bigl(
        \mathcal A R_i+\beta\kappa_iZ_i
    \bigr)^2
    =
    \frac{|\nabla b|_a^2}{b^2}.
\]

For the second derivatives of the cutoff,
\[
    \rho_{ij}
    =
    -2(
        \delta_{ij}
        -t_it_j
        -h_{ij}\omega
    ),
\]
and hence
\[
\begin{aligned}
    \Delta_a(2\log\rho)
    &=
    -\frac4\rho
    \sum_i a_i(1-t_i^2)
    +
    \frac{4m\omega}{\rho}
    -
    \frac8{\rho^2}
    \sum_i a_iR_i^2
    \geq
    -\frac{C}{\rho^2}.
\end{aligned}
\]
Here we used \eqref{3.16}, $|\omega|\leq\ell$, and
$0<\rho\leq\ell^2$.

Also, \eqref{3.17} and $b>S>\frac{2}{\gamma}$ give
\[
\begin{aligned}
    \Delta_a\log b
    &=
    \frac{\Delta_ab}{b}
    -
    \frac{|\nabla b|_a^2}{b^2}
    \\
    &\geq
    (\gamma b-1)\mathcal S
    \geq
    \frac{\gamma b}{2}\mathcal S.
\end{aligned}
\]
Combining these estimates with \eqref{3.20}, we obtain
\[
\begin{aligned}
    0
    &=
    \Delta_a
    \left(
        2\log\rho+\beta\Phi+\log b
    \right)
    \\
    &\geq
    -\frac{C}{\rho^2}
    +
    c\beta r\mathcal E_a
    -
    \frac{C\beta}{r}
    +
    \frac{\gamma b}{2}\mathcal S.
\end{aligned}
\]
Since $\beta\leq r_*\leq r$, we have $\frac{\beta}{r}\leq1$.
As $\rho\leq\ell^2$, the resulting constant term can be absorbed
into $\frac{C}{\rho^2}$. Thus
\begin{equation}\label{3.22}
    0
    \geq
    -\frac{C}{\rho^2}
    +
    c\beta r\mathcal E_a
    +
    \frac{\gamma b}{2}\mathcal S.
\end{equation}
In particular,
\[
    \mathcal E_a
    \leq
    \frac{C}{\beta r\rho^2},
    \qquad
    \mathcal S
    \leq
    \frac{C}{b\rho^2}.
\]
The constants in these estimates are independent of the choices
of $\beta$ and $S$, subject to the preliminary restrictions.

Using $\mathcal A\geq\frac{2}{\rho}$ and \eqref{3.21}, we obtain
\begin{equation}\label{3.23}
\begin{aligned}
    \sum_i a_iR_i^2
    &\leq
    \frac{2}{\mathcal A^2}
    \left(
        \mathcal S
        +
        \beta^2
        \sum_i a_i\kappa_i^2Z_i^2
    \right)
    \\
    &\leq
    C\rho^2
    \left(
        \mathcal S
        +
        \beta^2r^2\mathcal E_a
    \right)
    \\
    &\leq
    C\left(
        \frac1b+\beta r
    \right)
    \leq
    C\left(
        \frac1b+\beta
    \right).
\end{aligned}
\end{equation}

Define
\[
    I_-=\{i:\kappa_i\leq1\},
    \qquad
    I_+=\{i:\kappa_i>1\}.
\]
For $i\in I_-$, the directional estimates in \eqref{3.16} give
$a_i\geq c$: the sign bound applies when $\kappa_i\leq0$, while
$1+\kappa_i^2\leq2$ when $0<\kappa_i\leq1$. For $i\in I_+$,
\[
    a_i\kappa_i^2
    \geq
    c\frac{\kappa_i^2}{1+\kappa_i^2}
    \geq
    \frac c2.
\]
It follows from \eqref{3.23} that
\[
    \sum_{i\in I_-}R_i^2
    \leq
    C\sum_i a_iR_i^2
    \leq
    C\left(
        \frac1b+\beta
    \right).
\]
Hence, by Cauchy--Schwarz and \eqref{3.21},
\[
\begin{aligned}
    \left|
        \sum_{i\in I_-}R_iZ_i
    \right|
    &\leq
    \left(
        \sum_{i\in I_-}R_i^2
    \right)^{\frac12}
    |\mathbf Z|
    \\
    &\leq
    C_2r
    \left(
        \frac1S+\beta
    \right)^{\frac12}.
\end{aligned}
\]

We now fix
\[
    \beta=c_1r_*^2,
    \qquad
    S\geq C_1r_*^{-2},
\]
where $c_1>0$ is sufficiently small and $C_1>0$ sufficiently large
that
\[
    c_1
    \leq
    \min\left\{
        1,
        \frac2{W_0\ell^2}
    \right\},
    \qquad
    C_2
    \left(
        c_1+\frac1{C_1}
    \right)^{\frac12}
    \leq
    \frac{c_Z}{2}.
\]
Take $S$ also to satisfy its preliminary lower bounds. These choices
depend only on $n,k,R,K$ and satisfy all earlier restrictions because
$r_*\leq1$.

Since $r\geq r_*$,
\[
    \left|
        \sum_{i\in I_-}R_iZ_i
    \right|
    \leq
    \frac{c_Z}{2}rr_*
    \leq
    \frac{c_Z}{2}r^2.
\]
Together with \eqref{3.21}, this gives
\[
    \sum_{i\in I_+}R_iZ_i
    =
    \mathbf R\cdot\mathbf Z
    -
    \sum_{i\in I_-}R_iZ_i
    \geq
    \frac{c_Z}{2}r^2.
\]
Hence there exists $j\in I_+$ such that
\[
    R_jZ_j
    \geq
    \frac{c_Z}{2n}r^2>0,
    \qquad
    |Z_j|
    \geq
    \frac{c_Z}{2n}r,
\]
where we used $|R_j|\leq r$. Since
$\mathcal A>0$, $\beta>0$, and $\kappa_j>1$,
the terms $\mathcal A R_j$ and $\beta\kappa_jZ_j$ have the same
sign. Consequently,
\[
\begin{aligned}
    \mathcal S
    &\geq
    a_j
    \bigl(
        \mathcal A R_j+\beta\kappa_jZ_j
    \bigr)^2
    \\
    &\geq
    a_j\beta^2\kappa_j^2Z_j^2
    \geq
    c\beta^2r^2.
\end{aligned}
\]
Returning to \eqref{3.22}, we conclude that
\[
    b\rho^2
    \leq
    \frac{C}{\beta^2r^2}
    \leq
    \frac{C}{\beta^2r_*^2}
    \leq
    Cr_*^{-6}.
\]

Thus, whether $b(\bar y)\leq S$ or $b(\bar y)>S$, maximality gives
\[
\begin{aligned}
    \rho(y)^2
    e^{\beta\Phi(y)}
    \max\{b(y),S\}
    &\leq
    \mathcal Q(\bar y)
    \\
    &\leq
    e^{\beta\|\Phi\|_{L^\infty(\mathcal B)}}
    \max\left\{
        \ell^4S,
        Cr_*^{-6}
    \right\}.
\end{aligned}
\]
Here $S$ may be chosen as the maximum of the stated lower bounds
plus $1$, so $S$ depends only on $n,k,R,K$.

For $y\in B_{\frac{R}{2}}=B_{\frac{2R_0}{3}}$,
\[
    |y-y_0|
    \leq
    \frac{2R_0}{3}
    +
    \frac{R_0}{16}
    =
    \frac{35R_0}{48}
    <
    \ell,
\]
and hence
\[
    \rho(y)
    \geq
    \left(
        \frac{3R_0}{4}
    \right)^2
    -
    \left(
        \frac{35R_0}{48}
    \right)^2
    =
    \frac{71R_0^2}{2304}
    >0.
\]
Since $|\Phi|\leq C$ on $\mathcal B$ and $r_*$ has a positive lower
bound depending only on $n,k,R,K$, the preceding maximum estimate
gives $b\leq C$ on $B_{\frac{R}{2}}$. Using \eqref{3.2}, we obtain
\begin{equation}\label{3.24}
    v-u\geq d=e^{-b}\geq c>0
    \qquad
    \text{in }B_{\frac{R}{2}}.
\end{equation}
This proves Theorem~\ref{thm:separation}.

\section{The two-surface Pogorelov estimate}\label{pog-section}

We prove a weighted curvature estimate for two admissible graphs.
The upper graph supplies the comparison term in the paired
second-derivative calculation; no equation is prescribed for it.

\begin{proposition}\label{pog}
Let $F$ be positive, symmetric, smooth, elliptic, concave, and
homogeneous of degree one on an open convex symmetric cone
$\Gamma\subset\Gamma_1$. Let $\Omega\subset\mathbb{R}^n$ be a
bounded domain, and suppose that
$u,v\in C^4(\Omega)\cap C^2(\overline{\Omega})$ satisfy
\[
    \kappa[u],\kappa[v]\in\Gamma,
    \qquad
    F(\kappa[u])=1
    \quad\text{in }\Omega,
\]
and
\[
    \rho=v-u>0\quad\text{in }\Omega,
    \qquad
    \rho=0\quad\text{on }\partial\Omega,
    \qquad
    \|u\|_{C^1(\Omega)}+\|v\|_{C^1(\Omega)}\leq K.
\]
Then there exist $\beta=\beta(n,K)>0$ and $C=C(n,K)>0$ such that
\[
    \sup_{\Omega}\rho^\beta\kappa_1[u]\leq C.
\]
\end{proposition}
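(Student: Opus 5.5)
We would prove Proposition~\ref{pog} by a Pogorelov-type maximum principle, in the two-point spirit of \cite{QiuYan}. The weight $\rho$ is replaced by a geometric quantity that can be differentiated on $\Sigma_u$ alone while still seeing $\Sigma_v$. Concretely, on $\Sigma_u$ we consider the test function
\[
    \Theta=\beta\log\rho+\log\kappa_1+\frac{A}{2}|X_u|^2+\log\frac{1}{W_u}\cdot(-1)\cdot B,
\]
that is, $\beta\log\rho+\log\kappa_1+\tfrac A2|X_u|^2+B\log W_u$. Here $\kappa_1$ is the largest principal curvature of $\Sigma_u$. We handle its possible non-smoothness by the standard perturbation: replace $\kappa_1$ by the eigenvalue of $h_{ij}-\epsilon\,\mathrm{diag}$ at the maximum point, or work with $\log h_{11}$ in a frame fixed at that point. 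Since $\rho=0$ on $\partial\Omega$, the quantity $\Theta$ attains an interior maximum at some $x_0$. The point is to estimate $\kappa_1(x_0)$ after multiplying by $\rho^\beta$.

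At $x_0$, the first step is the usual computation with $\Delta_F$ on $\Sigma_u$. Differentiating $F=1$ twice gives $F^{ii}h_{11ii}\geq \kappa_1\mathcal E-\kappa_1^2\cdot0-F^{ij,rs}h_{ij1}h_{rs1}$. Commuting derivatives yields $\Delta_F h_{11}=-F^{ij,rs}h_{ij1}h_{rs1}+\kappa_1\mathcal E-\kappa_1^2$, using homogeneity $\sum F_i\kappa_i=1$. The term $\kappa_1\mathcal E/\kappa_1=\mathcal E$ is the good term. The identities \eqref{2.1} give $\Delta_F(B\log W_u)\geq B\mathcal E-C B|\nabla \log W|^2_F$ and $\Delta_F\tfrac A2|X|^2=A\sum F_i-A\langle X,N_u\rangle$, which is $\geq A\sum F_i-CA$. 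The third-order terms are treated as in Pogorelov/Guan--Ren--Wang style arguments. We split indices with $F_i\leq \kappa_1^{-2}F_1$-type conditions and use concavity, namely $-F^{ij,rs}h_{ij1}h_{rs1}\geq 2\sum_{i\ge2}\frac{F_i-F_1}{\kappa_1-\kappa_i}h_{11i}^2$, to absorb $F_i h_{11i}^2/\kappa_1^2$. The first-order condition is used for the indices where this fails. We do not expect to need any structure of the quotient beyond concavity and homogeneity, since the statement is for general $F$.

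The genuinely new step is the weight term $\beta\Delta_F\log\rho=\beta\Delta_F\rho/\rho-\beta|\nabla\rho|_F^2/\rho^2$. Remark~\ref{rem:vertical-gap-hessians} shows that $\Delta_F$ of the vertical gap on $\Sigma_u$ contains $D^2v$ with a mismatched coefficient. The unfavorable piece is $F^{ij}$ contracted with $D_{ij}v/W_u$-type terms, and only the sign information $\kappa[v]\in\Gamma\subset\Gamma_1$ is available for it. To handle this we pair the tangent spaces as in Proposition~\ref{prop:distance}. We work on the product $\Sigma_u\times\Sigma_v$ with a function of the form $\beta\log\rho(x,y)$, where $\rho(x,y)$ is a two-point gap such as $v(y)-u(x)-\langle\cdot\rangle$, restricted to the diagonal at the maximum. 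We then differentiate along an isometric pairing $e_i\mapsto \tilde e_i$ chosen so that the Hessian of $v$ enters through $F^{ij}(A_u)h^v_{ij}$ with a positive coefficient. By concavity and homogeneity, $F^{ij}(A_u)h^v_{ij}\geq F(A_v)>0$ because $A_v\in\Gamma$, so this term has a favorable sign. The leftover angle terms are of size $C(K)\sum F_i(1+|\nabla\rho|)$, together with terms $C\kappa_i$ coming from the mismatch of $N_u$ and $N_v$. The large-$\kappa_1$ part of these must be absorbed by $\mathcal E$ and $A\sum F_i$. We expect this pairing and the control of the angle terms to be the main obstacle. Our first attempt would be to choose the pairing that matches normals, exactly as in \eqref{eq:adapted-distance}, so that the mismatch is confined to a single direction.

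Finally, the gradient term $\beta|\nabla\rho|^2_F/\rho^2$ is controlled using the first-order condition $\beta\rho_i/\rho=-h_{11i}/\kappa_1-\dots$. This is done in the usual Pogorelov manner, splitting the index set according to whether $F_i\geq c F_1$. Choosing $\beta$ small and then $A,B$ large, all depending only on $n,K$, produces an inequality of the form $0\geq c\,\kappa_1^{2}F_1-C\sum F_i/\rho\cdots$. From this we get $\rho^\beta\kappa_1\leq C$ at $x_0$, hence everywhere, by the maximality of $\Theta$.
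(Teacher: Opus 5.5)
The gap is at the step you flag as the main obstacle: you never produce a maximum point at which the isometric pairing $e_i\mapsto\mathcal Re_i$ is a legitimate variation.

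If $\Theta$ is maximized at a single point $x_0$ and the two-point gap is restricted to the diagonal, the only admissible variations move $X_u(x)$ and $X_v(x)$ together, i.e.\ along $dX_v\circ(dX_u)^{-1}$. That map is neither isometric nor normal-matching, so $D^2v$ re-enters with the mismatched coefficient of Remark~\ref{rem:vertical-gap-hessians}. If instead you pull back by a map $\mathcal J$ with $\mathcal J(X_u(x_0))=X_v(x_0)$ and $d\mathcal J=\mathcal R$, then $v(\pi\mathcal J(X'))-u(\pi X')$ is not $\leq\rho$ near $x_0$, with or without a quadratic penalty, because $v(y)-v(x)$ is of first order in $y-x$. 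So it is not a lower support for $\log\rho$, and the maximum principle gives nothing.

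The paper supplies the missing doubling step. Take $x_0$ a maximizer of $\rho^{2p}e^G$ and $r=\rho(x_0)$. It maximizes $e^{G(x)}\eta(x,y)^p$ over pairs, where $\eta=v(y)-u(x)-\frac{17r}{32}-\frac{8K^2}{r}|y-x|^2$ on $\{\frac r2<\rho(x)<2r,\ |y-x|<\frac{r}{2K}\}$. The constants give $\eta\leq\rho(x)-\frac r2$ and $\eta(x_0,x_0)=\frac{15r}{32}$, which forces an interior maximizing pair $(x_*,y_*)$. This pair is in general off the diagonal, since $Dv(y_*)=\frac{16K^2}{r}(y_*-x_*)$.

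That first-order condition makes the ambient gradients of $\eta$ equal to $\pm W_vN_v$. Hence along $(e_i,\mathcal Re_i)$ the Hessian is $W_v\bigl(h_v(\widetilde e_i,\widetilde e_i)-c\kappa_i\bigr)-\frac{16K^2}{r}|\pi(\widetilde e_i-e_i)|^2$, with $c=\langle N_u,N_v\rangle$ and $\pi$ the horizontal projection. Only at such a product maximum may you pull back by $\mathcal J$ with $d\mathcal J_X=\mathcal R$ and $(\nabla d\mathcal J)_X=0$. At the end, $\eta_*^pe^{G(x_*)}\geq(\frac{15r}{32})^pe^{G(x_0)}$ transfers the bound back to $\rho^{2p}\kappa_1$.

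Your error bookkeeping would not close either. The leftover terms you anticipate, of size $C\sum_iF_i(1+|\nabla\rho|)$ plus $C\kappa_i$-terms, enter multiplied by $\beta/\rho$. A term $-\frac{C\beta}{\rho}\sum_iF_i$ has a coefficient that blows up where $\rho$ is small. Moreover, $\sum_iF_i$ is controlled neither by $A\sum_iF_i$ with $A$ fixed nor by fixed multiples of $\mathcal E$ and $\kappa_1$. Likewise, an end inequality $c\kappa_1^2F_1\leq C\rho^{-1}\sum_iF_i$ does not bound $\kappa_1$ for general $F$. For the quotient at $(L,1,\dots,1)$, rescaled to $F=1$, one has $F_1\sim L^{-2}$ while $\sum_iF_i\sim1$.

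In the paper, the concavity bound you correctly identified gives $\sum_iF_ih_v(\widetilde e_i,\widetilde e_i)\geq F(A_v^{\mathcal R})>0$. The curvature mismatch sums to $-cW_v\sum_iF_i\kappa_i=-cW_v$ by homogeneity. So, apart from a bounded constant, the only error is the angle term, which is quadratic in $\eta_i=W_v\langle N_v,e_i\rangle$. After division by $\eta$ it joins the gradient term as $-\Lambda\sum_iF_i\eta_i^2/\eta^2$ with $\Lambda\leq C_0p$. It is then beaten by the concavity term $\gamma_iF_ib_i^2$, where $b_i=-p\eta_i/\eta$ up to the gradient of the angle factor and $\gamma_i=\frac{\kappa_1+\kappa_i}{\kappa_1-\kappa_i}\geq\frac13$ when $\kappa_i\geq-\frac{\kappa_1}{2}$. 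This requires $p\geq4C_0$, so the exponent $\beta=2p$ must be large, not small.

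Recheck your sign as well. By Simons' identity, $\kappa_1^{-1}\sum_iF_ih_{11ii}$ contains $+\kappa_1-\mathcal E$. Thus $\kappa_1$ is the good term: it absorbs the remaining $-Cp/\eta$ and gives $\eta_*\kappa_1\leq Cp$. The term $-\mathcal E$ is the one that must be repaired. Your $B\log W_u$ with $B>1$ yields $(B-1)\mathcal E$, but it leaves $-(B^2-B)F_1(\log W_u)_1^2$ in the $e_1$ direction, which can be as large as $B^2K^2F_1\kappa_1^2$. The paper uses $-\log(W_u^{-1}-a_*)$ instead. Its gradient square cancels the corresponding part of $-F_1b_1^2$ exactly, while it still produces $\frac{a_*}{W_u^{-1}-a_*}\mathcal E$.
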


\begin{proof}
We may assume $K\geq1$. Unless otherwise indicated, curvatures,
covariant derivatives, and the linearized operator in this proof
are taken on the lower graph $\Sigma_u$. In a principal orthonormal
frame, write $F_i=F^{ii}$ and
\[
    \mathcal E=\sum_iF_i\kappa_i^2.
\]
Set
\[
    w=W_u^{-1},
    \qquad
    a_*=\frac{1}{2\sqrt{1+K^2}},
    \qquad
    G=\log\kappa_1-\log(w-a_*).
\]
Since $\Gamma\subset\Gamma_1$, we have $\kappa_1>0$ in $\Omega$.
Also
\[
    2a_*\leq w\leq1,
\]
so $w-a_*\geq a_*>0$.

We first record a differential inequality for a smooth upper test
$b$ of $\log\kappa_1$. At the contact point, let $m$ be the
multiplicity of $\kappa_1$, and set $\psi=e^b$. The standard
upper-support formula for the largest eigenvalue gives
\[
\begin{gathered}
    h_{\alpha\beta i}
    =
    \psi_i\delta_{\alpha\beta},
    \qquad
    1\leq\alpha,\beta\leq m,
    \\
    \psi_{ii}
    \geq
    h_{11ii}
    +
    2\sum_{j>m}
    \frac{h_{1ji}^2}{\kappa_1-\kappa_j},
    \qquad
    1\leq i\leq n.
\end{gathered}
\]
In particular,
\[
    \psi_i=h_{11i},
\]
and Codazzi implies
\[
    \psi_i=h_{1i1}=0,
    \qquad
    2\leq i\leq m.
\]

Differentiating $F=1$ twice and commuting covariant derivatives
gives
\[
    \sum_iF_i h_{11ii}
    =
    -F^{ij,rs}h_{ij1}h_{rs1}
    +
    \kappa_1^2-\kappa_1\mathcal E.
\]
The spectral second-derivative formula, together with concavity, gives
\[
    -F^{ij,rs}h_{ij1}h_{rs1}
    \geq
    2\sum_{i>m}
    \frac{F_i-F_1}{\kappa_1-\kappa_i}
    h_{1i1}^2;
\]
see \cite[Theorem~5.1]{AndrewsPinching}. All omitted terms are
nonnegative. Combining this estimate with the terms corresponding
to the first principal direction $e_1$ in the second variation of
$\psi$, and using $h_{1i1}=h_{11i}$, we obtain
\[
\begin{aligned}
    &\frac{2}{\kappa_1}
    \left(
        \frac{F_i-F_1}{\kappa_1-\kappa_i}
        +
        \frac{F_1}{\kappa_1-\kappa_i}
    \right)
    h_{11i}^2
    -
    \frac{F_i h_{11i}^2}{\kappa_1^2}=
    \frac{\kappa_1+\kappa_i}
         {\kappa_1-\kappa_i}
    F_i b_i^2,
    \qquad
    i>m.
\end{aligned}
\]
Consequently,
\begin{equation}\label{4.1}
    \Delta_F b
    \geq
    -F_1b_1^2
    +
    \sum_{i>m}
    \frac{\kappa_1+\kappa_i}
         {\kappa_1-\kappa_i}
    F_i b_i^2
    +
    \kappa_1-\mathcal E,
    \qquad
    b_i=0
    \quad(2\leq i\leq m).
\end{equation}

For later use, define
\[
    \gamma_1=-1,
    \qquad
    \gamma_i=0
    \quad(2\leq i\leq m),
    \qquad
    \gamma_i=
    \frac{\kappa_1+\kappa_i}
         {\kappa_1-\kappa_i}
    \quad(i>m).
\]
For $i>m$, positivity of the mean curvature implies
\[
    \kappa_i>-(n-1)\kappa_1,
\]
and hence
\[
    \gamma_i+1
    =
    \frac{2\kappa_1}
         {\kappa_1-\kappa_i}
    \geq
    \frac{2}{n}.
\]
Also, if $i>m$ and $\kappa_i\geq-\frac{\kappa_1}{2}$, then
\[
    \gamma_i\geq\frac{1}{3}.
\]

Choose $p\geq1$, to be fixed below, and set \( \beta=2p\). Let $x_0\in\Omega$ maximize $\rho^{2p}e^G$, and put
\[
    r=\rho(x_0)>0.
\]
Such an interior maximum exists because $\rho=0$ on
$\partial\Omega$ and $\kappa_1$ is bounded on
$\overline{\Omega}$.

For
\[
    \frac{r}{2}<\rho(x)<2r,
    \qquad
    |y-x|<\frac{r}{2K},
\]
define
\[
    \eta(x,y)
    =
    v(y)-u(x)
    -
    \frac{17r}{32}
    -
    \frac{8K^2}{r}|y-x|^2.
\]
Since
\[
    |D\rho|\leq K
    \qquad\text{and}\qquad
    \rho=0\quad\text{on }\partial\Omega,
\]
we have
\[
    \operatorname{dist}(x,\partial\Omega)
    \geq
    \frac{\rho(x)}{K}
    >
    \frac{r}{2K}.
\]
Thus the $y$-ball lies in $\Omega$. The gradient bound and
\[
    K|y-x|
    -
    \frac{8K^2}{r}|y-x|^2
    \leq
    \frac{r}{32}
\]
give
\[
    \eta(x,y)
    \leq
    \rho(x)-\frac{r}{2},
    \qquad
    \eta(x_0,x_0)=\frac{15r}{32}.
\]

Maximize
\[
    e^{G(x)}\eta(x,y)^p
\]
on the region where $\eta>0$. The maximum cannot occur where
$\eta=0$ or $\rho=\frac{r}{2}$. On
\[
    |y-x|=\frac{r}{2K},
\]
we have
\[
    \eta
    \leq
    \rho-\frac{65r}{32}
    \leq
    -\frac{r}{32}.
\]
On $\rho=2r$, maximality of $\rho^{2p}e^G$ gives
\[
    \frac{
        e^{G(x)}\eta(x,y)^p
    }{
        e^{G(x_0)}
        \left(\frac{15r}{32}\right)^p
    }
    \leq
    \frac{1}{2^{2p}}
    \left(
        \frac{16}{5}
    \right)^p
    =
    \left(
        \frac{4}{5}
    \right)^p
    <1.
\]
Hence the maximum is attained at an interior pair $(x_*,y_*)$.
Differentiation in $y$ gives
\begin{equation}\label{4.2}
    Dv(y_*)
    =
    \frac{16K^2}{r}
    (y_*-x_*),
    \qquad
    0<\eta_*:=\eta(x_*,y_*)
    \leq
    \frac{3r}{2}.
\end{equation}

We regard $\eta$ as a function on
$\Sigma_u\times\Sigma_v$ through the graph parametrizations.
All quantities below are evaluated at
\[
    X=X_u(x_*),
    \qquad
    Y=X_v(y_*).
\]
Let
\[
    c=\langle N_u,N_v\rangle.
\]
Since the graph normals are downward,
\[
    1+c
    =
    \frac{|N_u+N_v|^2}{2}
    \geq
    \frac{2}{1+K^2}
    >0.
\]

The tangent spaces are identified by the isometry
\[
    \mathcal RZ
    =
    Z
    -
    \frac{\langle N_v,Z\rangle}{1+c}
    (N_u+N_v),
    \qquad
    Z\in T_X\Sigma_u.
\]
Choose a principal orthonormal frame $\{e_i\}$ at $X$, and set
\[
    \widetilde e_i=\mathcal Re_i.
\]
Let $\pi$ denote horizontal projection. For the paired directions,
write
\[
    \eta_i
    =
    d\eta(e_i,\widetilde e_i),
\]
and
\[
    \mathcal H_i
    =
    \nabla^2_{\Sigma_u\times\Sigma_v}\eta
    \bigl(
        (e_i,\widetilde e_i),
        (e_i,\widetilde e_i)
    \bigr).
\]
Using \eqref{4.2}, differentiation gives
\[
    \eta_i
    =
    W_v\langle N_v,e_i\rangle,
    \qquad
    |\eta_i|\leq C(K),
\]
and
\[
    |\pi(\widetilde e_i-e_i)|^2
    \leq
    \frac{
        2\langle N_v,e_i\rangle^2
    }{
        1+c
    }
    \leq
    C(K)\eta_i^2.
\]
Moreover,
\[
    \mathcal H_i
    =
    W_v
    \bigl(
        h_v(\widetilde e_i,\widetilde e_i)
        -
        c\kappa_i
    \bigr)
    -
    \frac{16K^2}{r}
    |\pi(\widetilde e_i-e_i)|^2.
\]
Indeed, at the critical pair the ambient gradients of $\eta$
with respect to $X$ and $Y$ are
\[
    W_v N_v
    \qquad\text{and}\qquad
    -W_v N_v,
\]
respectively; the ambient Hessian contributes only the quadratic
penalty.

Let $A_v^{\mathcal R}$ denote the matrix with entries
\[
    h_v(\widetilde e_i,\widetilde e_j).
\]
It is admissible and has the same eigenvalues as $A_v$.
Concavity and homogeneity give
\[
\begin{aligned}
    0
    <
    F(A_v^{\mathcal R})
    &\leq
    F(A_u)
    +
    F^{ij}
    \bigl(
        (A_v^{\mathcal R})_{ij}
        -
        (A_u)_{ij}
    \bigr)
    \\
    &=
    \sum_iF_i
    h_v(\widetilde e_i,\widetilde e_i).
\end{aligned}
\]
Since
\[
    \sum_iF_i\kappa_i=1,
\]
the paired trace satisfies
\begin{equation}\label{4.3}
    \sum_iF_i\mathcal H_i
    \geq
    -C(K)
    -
    \frac{C(K)}{r}
    \sum_iF_i\eta_i^2.
\end{equation}
This is the only step in which the admissibility of $v$ is used;
no lower bound for $F(\kappa[v])$ is required.

To handle a possible repeated largest curvature, choose a smooth
local map
\[
    \mathcal J:\Sigma_u\to\Sigma_v
\]
such that
\[
    \mathcal J(X)=Y,
    \qquad
    d\mathcal J_X=\mathcal R,
    \qquad
    (\nabla d\mathcal J)_X=0.
\]
Such a map is obtained by using the linear map $\mathcal R$ in
geodesic coordinates at $X$ and $Y$. The product maximum supplies
the smooth upper test
\[
    b(X')
    =
    \log\kappa_1(X)
    +
    \log
    \frac{
        w(X')-a_*
    }{
        w(X)-a_*
    }
    +
    p\log
    \frac{
        \eta_*
    }{
        \eta(X',\mathcal J(X'))
    }
\]
for $\log\kappa_1$ at $X$. In particular, \eqref{4.1} applies
without assuming that the largest eigenvalue is simple. The
vanishing covariant second derivative of $\mathcal J$ ensures that
the Hessian of the pulled-back cutoff is the paired Hessian used in
\eqref{4.3}.

Put
\[
    \zeta_i
    =
    \frac{\eta_i}{\eta},
    \qquad
    s_i
    =
    -\frac{w_i}{w-a_*}.
\]
The first derivative identity is
\[
    b_i+p\zeta_i+s_i=0.
\]
Using
\[
    \Delta_Fw=-w\mathcal E,
\]
we have
\[
    \Delta_F[-\log(w-a_*)]
    =
    \frac{w}{w-a_*}\mathcal E
    +
    \sum_iF_i s_i^2.
\]
Tracing the Hessian of the pulled-back maximum function gives
\[
\begin{aligned}
    0
    ={}&
    \Delta_Fb
    +
    \Delta_F[-\log(w-a_*)]
    +
    \frac{p}{\eta}
    \sum_iF_i\mathcal H_i
    -
    p\sum_iF_i\zeta_i^2.
\end{aligned}
\]
Combining \eqref{4.1} and \eqref{4.3}, and using
\[
    \eta\leq\frac{3r}{2},
\]
we obtain
\begin{equation}\label{4.4}
\begin{aligned}
    0\geq{}&
    \kappa_1
    +
    \frac{a_*}{w-a_*}\mathcal E
    -
    \frac{Cp}{\eta}
    \\
    &+
    \sum_iF_i
    \left\{
        \gamma_i(p\zeta_i+s_i)^2
        -
        \Lambda\zeta_i^2
        +
        s_i^2
    \right\},
    \qquad
    p\leq\Lambda\leq C_0(K)p.
\end{aligned}
\end{equation}
Here we may take
\[
    \Lambda
    =
    p
    \left(
        1+
        C(K)\frac{\eta}{r}
    \right).
\]
In particular, the term $-\mathcal E$ in \eqref{4.1} has combined
with
\(
    \frac{w}{w-a_*}\mathcal E
\)
to give the positive curvature term in \eqref{4.4}.

We estimate the directional quadratic expression
\begin{equation}\label{4.5}
    Q_i
    =
    \gamma_i(p\zeta_i+s_i)^2
    -
    \Lambda\zeta_i^2
    +
    s_i^2.
\end{equation}
For $i>m$, completing the square gives
\[
    Q_i
    =
    (\gamma_i+1)
    \left(
        s_i
        +
        \frac{p\gamma_i}{\gamma_i+1}
        \zeta_i
    \right)^2
    +
    \left(
        \frac{p^2\gamma_i}{\gamma_i+1}
        -
        \Lambda
    \right)
    \zeta_i^2.
\]
Choose
\[
    p
    \geq
    \max\{1,4C_0(K)\}.
\]
If $i>m$ and
\(
    \kappa_i\geq-\frac{\kappa_1}{2},
\)
then
\[
    \frac{\gamma_i}{\gamma_i+1}
    \geq
    \frac{1}{4},
\]
and hence
\(
    Q_i\geq0.
\)

For $i>m$ with
\(
    \kappa_i<-\frac{\kappa_1}{2},
\)
the bound
\(
    \gamma_i+1\geq\frac{2}{n}
\)
gives
\[
    Q_i
    \geq
    -C(n,K)p^2\zeta_i^2.
\]

For $2\leq i\leq m$, \eqref{4.1} gives $b_i=0$, and hence
\[
    s_i=-p\zeta_i.
\]
Thus
\[
    Q_i
    =
    (p^2-\Lambda)\zeta_i^2
    \geq0.
\]

For the first principal direction $e_1$,
\[
    Q_1
    =
    -(p^2+\Lambda)\zeta_1^2
    -
    2p\zeta_1s_1,
\]
and
\[
    |s_1|
    \leq
    \frac{\kappa_1}{a_*}.
\]
The latter estimate follows from
\[
    w_1=-\kappa_1u_1
\]
and $|u_1|\leq1$, where $u_1$ is the intrinsic derivative of the
height function.

Since
\[
    |\zeta_i|
    \leq
    \frac{C(K)}{\eta},
\]
the bad directions satisfy
\[
    \sum_{\substack{
        i>m\\
        \kappa_i<-\frac{\kappa_1}{2}
    }}
    F_iQ_i
    \geq
    -
    \frac{Cp^2}
         {\eta^2\kappa_1^2}
    \mathcal E,
\]
while the first direction gives
\[
    F_1Q_1
    \geq
    -
    \left(
        \frac{Cp^2}
             {\eta^2\kappa_1^2}
        +
        \frac{Cp}
             {\eta\kappa_1}
    \right)
    \mathcal E.
\]
Therefore, with
\[
    T=\eta\kappa_1,
    \qquad
    c_a
    =
    \frac{a_*}{1-a_*}
    >0,
\]
inequality \eqref{4.4} yields
\[
    0
    \geq
    \kappa_1
    +
    \left(
        c_a
        -
        \frac{Cp^2}{T^2}
        -
        \frac{Cp}{T}
    \right)
    \mathcal E
    -
    \frac{Cp}{\eta}.
\]
If
\(
    T\geq Lp,
\)
where $L=L(n,K)$ is sufficiently large, the coefficient of
$\mathcal E$ is at least
\(
    \frac{c_a}{2}.
\)
Dropping this nonnegative term gives
\[
    T\leq Cp.
\]
If $T<Lp$, the same estimate holds after increasing $C$. Hence
\[
    \eta_*\kappa_1(x_*)
    \leq
    Cp,
    \qquad
    \eta_*e^{G(x_*)}
    \leq
    C,
\]
where $p$ is now fixed in terms of $n$ and $K$.

Finally, maximality at $(x_*,y_*)$ and \eqref{4.2} imply
\[
    \left(
        \frac{15r}{32}
    \right)^p
    e^{G(x_0)}
    \leq
    \eta_*^p e^{G(x_*)}
    \leq
    C\eta_*^{p-1}
    \leq
    C
    \left(
        \frac{3r}{2}
    \right)^{p-1}.
\]
Thus
\[
    re^{G(x_0)}
    \leq
    C.
\]
Since $p\geq1$ and
\(
    r=\rho(x_0)\leq2K,
\)
we conclude that
\[
    \sup_{\Omega}
    \rho^{2p}\kappa_1
    \leq
    \sup_{\Omega}
    \rho^{2p}e^G
    =
    r^{2p}e^{G(x_0)}
    \leq
    Cr^{2p-1}
    \leq
    C(n,K).
\]
Taking
\(
    \beta=2p
\)
completes the proof.
\end{proof}

\section{The comparison problem and proof of Theorem~\ref{thm:main}}
\label{sec:main-proof}

The separation estimate was proved under the assumption that an
admissible comparison graph is available. We now construct this graph
on a sufficiently small ball and complete the proof of
Theorem~\ref{thm:main}. The estimates used only for the existence
argument may depend on higher norms of the fixed boundary data.
The separation and Pogorelov estimates do not use these higher norms.

\subsection{A tangential energy estimate}

The following algebraic estimate will be used to control the full
Hessian energy by its tangential part in the boundary estimates.

\begin{lemma}\label{energy}
Let $3\leq k\leq n$ and let $\kappa\in\Gamma_k$ satisfy
$F(\kappa)=s>0$. Then, for each $1\leq i\leq n$,
\[
    \sum_{j=1}^nF_j\kappa_j^2
    \leq
    \frac{n+2}{k}\sum_{j\neq i}F_j\kappa_j^2
    +
    \frac{n+2-k}{n-k+1}s^2
    \sum_{j=1}^nF_j.
\]
\end{lemma}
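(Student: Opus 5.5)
The plan is to reduce the inequality to an algebraic identity for the full energy plus a bound on the single-direction term. Since the total energy is $\mathcal E=\sum_jF_j\kappa_j^2$, the claim is equivalent to
\[
    F_i\kappa_i^2
    \leq
    \frac{n+2-k}{k}\sum_{j\neq i}F_j\kappa_j^2
    +\frac{n+2-k}{n-k+1}s^2\sum_jF_j.
\]
Everything will be multiplied by the positive factor $2s\sigma_{k-2}$, so that by the formula for $F_i$ from the proof of Lemma~\ref{gradient} we have $2s\sigma_{k-2}F_j=\sigma_{k-1}(\kappa|j)-s^2\sigma_{k-3}(\kappa|j)$. All quantities then become polynomial in the $\sigma_r$.

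Using $\sum_j\kappa_j^2\sigma_{r-1}(\kappa|j)=\sigma_1\sigma_r-(r+1)\sigma_{r+1}$ and the equation $\sigma_k=s^2\sigma_{k-2}$, I expect the $\sigma_1$ terms to cancel and give
\[
    2s\sigma_{k-2}\,\mathcal E=(k-1)s^2\sigma_{k-1}-(k+1)\sigma_{k+1},
    \qquad
    2s\sigma_{k-2}\sum_jF_j=(n-k+1)\sigma_{k-1}-(n-k+3)s^2\sigma_{k-3}.
\]
For the single direction, I would use the deletion identities $\kappa_i\sigma_{r-1}(\kappa|i)=\sigma_r-\sigma_r(\kappa|i)$. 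These give
\[
    2s\sigma_{k-2}F_i\kappa_i^2
    =\kappa_i\bigl(\sigma_k-\sigma_k(\kappa|i)\bigr)
    -s^2\kappa_i\bigl(\sigma_{k-2}-\sigma_{k-2}(\kappa|i)\bigr).
\]
Expressed through the deleted vector $\kappa|i$ (entries $a_i,b_i,c_i,e_i$ as in Lemma~\ref{gradient}), this becomes $\kappa_i^2(a_i-s^2c_i)$. The relation $s^2(b_i+\kappa_ic_i)=e_i+\kappa_ia_i$ can then be used to eliminate $s^2$ where convenient.

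I would split into two cases. When $\kappa_i\leq0$ (or more generally $\kappa_i$ not the dominant curvature), I expect the left side to be controlled directly by the $s^2\sum_jF_j$ term. The reason is that \eqref{2.3} gives $F_i$ comparable to $\sum_jF_j$, and for a negative entry $\kappa_i^2$ is bounded in terms of $s^2$ via the quotient relation $F^2\leq e_i/b_i$ together with Newton's inequality. When $\kappa_i$ is large and positive, I would bound $\kappa_i^2(a_i-s^2c_i)$ by the complementary sum. The complementary sum equals $2s\sigma_{k-2}\mathcal E$ minus the $i$-th term, rewritten in the deleted variables as $(k-1)s^2\sigma_{k-1}-(k+1)\sigma_{k+1}-\kappa_i^2(a_i-s^2c_i)$, expanded in $\kappa_i$. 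Comparing coefficients of powers of $\kappa_i$ reduces the claim to a Newton--Maclaurin inequality for $\kappa|i\in\Gamma_{k-1}$ of the type $e_ic_i\leq\vartheta a_ib_i$ already used in Lemma~\ref{gradient}. It also needs $\sigma_{k+1}(\kappa|i)\sigma_{k-1}(\kappa|i)\leq\frac{k(n-k-1)}{(k+1)(n-k)}\sigma_k(\kappa|i)^2$, which is the form used in \eqref{3.3}.

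The main obstacle is this last comparison. One must check that the specific constants $\frac{n+2-k}{k}$ and $\frac{n+2-k}{n-k+1}$ suffice, handling the sign of $\sigma_{k+1}(\kappa|i)$ and of $e_i$ separately. If the polynomial comparison in $\kappa_i$ does not close directly, I would fall back on bounding $\kappa_i\sigma_{k-1}(\kappa|i)\leq\sigma_k$ plus a multiple of $\sigma_{k-1}$. This uses $\sigma_k(\kappa|i)>-\frac{n-k}{k}\sigma_{k-1}(\kappa|i)$-type bounds analogous to the first step of Lemma~\ref{compression}, and reads off the constants from the two displayed identities.
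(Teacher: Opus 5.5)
Your reduction and bookkeeping are correct. After multiplying by $2s\sigma_{k-2}$ one has $2s\sigma_{k-2}\mathcal E=(k-1)s^2\sigma_{k-1}-(k+1)\sigma_{k+1}$, $2s\sigma_{k-2}\sum_jF_j=(n-k+1)\sigma_{k-1}-(n-k+3)s^2\sigma_{k-3}$, and $2s\sigma_{k-2}F_i\kappa_i^2=\kappa_i^2(a_i-s^2c_i)$.

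The gap is in the step that carries the whole content of the lemma, and your case $\kappa_i\leq0$ is wrong. There $F_i\kappa_i^2$ is not controlled by $s^2\sum_jF_j$ alone, because near $\partial\Gamma_k$ a negative curvature need not be $O(s)$. Note also that $F^2\leq e_i/b_i$ is an \emph{upper} bound for $s^2$, so it cannot give $\kappa_i^2\lesssim s^2$. For instance, take $n=4$, $k=3$, $\kappa=(1,1,1,-\mu)$ and $\mu\uparrow\frac13$. Then $s^2=\frac{1-3\mu}{3-\mu}\to0$, while $2s\sigma_1F_4\kappa_4^2=(3-s^2)\mu^2\to\frac13$ and $2s\sigma_1\,s^2\sum_jF_j=s^2(2\sigma_2-4s^2)\to0$. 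The inequality survives only because $2s\sigma_1\sum_{j\leq3}F_j\kappa_j^2\to1$. So the complementary sum is needed in every case, and the first half of your case split fails. For positive $\kappa_i$ you leave the decisive comparison unchecked. The tools you name, Newton inequalities for the $(n-1)$-vector $\kappa|i$, do not visibly produce the constant: in the variables of $\kappa|i$, the needed inequality mixes $\sigma_{k-3},\ldots,\sigma_{k+1}$ of $\kappa|i$, and you have not shown how the two inequalities you list imply it. Your fallback bound $\sigma_k(\kappa|i)>-\frac{n-k}{k}\sigma_{k-1}(\kappa|i)$ is not even homogeneous; in Lemma~\ref{compression} the analogous bound relied on the normalization $\mu_j<1$.

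The missing idea is to absorb $s$ into the vector. Put $\widehat\kappa=(\kappa,s,-s)\in\mathbb R^{n+2}$. Then $\sigma_j(\widehat\kappa)=\sigma_j(\kappa)-s^2\sigma_{j-2}(\kappa)$, the equation reads $\sigma_k(\widehat\kappa)=0$, and $T_\alpha=\sigma_{k-1}(\widehat\kappa|\alpha)$ satisfies $T_i=2s\sigma_{k-2}F_i$ for $i\leq n$. In your notation, $a=\sigma_k(\widehat\kappa|i)=e_i-s^2b_i$, $b=T_i=a_i-s^2c_i$, and $e=\sigma_{k+1}(\widehat\kappa|i)=\sigma_{k+1}(\kappa|i)-s^2a_i$. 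Your relation $s^2(b_i+\kappa_ic_i)=e_i+\kappa_ia_i$ is exactly $a+\kappa_ib=0$. Hence $T_i\kappa_i^2=a^2/b$, while $\sum_\beta T_\beta\widehat\kappa_\beta^2=-(k+1)\sigma_{k+1}(\widehat\kappa)=(k+1)(a^2/b-e)$. Newton's inequality for the $n+1$ real entries of $(\kappa|i,s,-s)$ is $(k+1)e\leq\frac{k(n+1-k)}{n+2-k}\frac{a^2}{b}$. It gives $T_i\kappa_i^2\leq\frac{n+2-k}{n+2}\sum_\beta T_\beta\widehat\kappa_\beta^2$ at once, with no case distinction and no sign issues. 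The two added entries contribute $s^2(T_{n+1}+T_{n+2})=2s^2\sigma_{k-1}$, so this reads $F_i\kappa_i^2\leq\frac{n+2-k}{n+2}\bigl(\mathcal E+s\frac{\sigma_{k-1}}{\sigma_{k-2}}\bigr)$. The last term is converted into $\frac{n+2-k}{n-k+1}s^2\sum_jF_j$ via $s^2\sigma_{k-3}\leq\frac{(n-k+1)(k-2)}{k(n-k+3)}\sigma_{k-1}$. Inserted into your own formula for $\sum_jF_j$, this gives $\sum_jF_j\geq\frac{n-k+1}{ks}\frac{\sigma_{k-1}}{\sigma_{k-2}}$.
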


\begin{proof}
Set
\[
    \widehat\kappa=(\kappa,s,-s)\in\mathbb R^{n+2},
\]
with the convention $\sigma_j=0$ for $j<0$. Then
\[
    \sigma_j(\widehat\kappa)
    =
    \sigma_j(\kappa)-s^2\sigma_{j-2}(\kappa).
\]
Newton--Maclaurin gives, for $2\leq j<k$,
\[
    \frac{\sigma_j(\kappa)}
         {\sigma_{j-2}(\kappa)}
    \geq
    \frac{
        \binom{n}{j}\binom{n}{k-2}
    }{
        \binom{n}{j-2}\binom{n}{k}
    }s^2
    >
    s^2.
\]
Thus $\sigma_j(\widehat\kappa)>0$ for $1\leq j<k$, while
$\sigma_k(\widehat\kappa)=0$. Hence
\[
    \widehat\kappa\in\partial\Gamma_k.
\]

Write
\[
    T_\alpha
    =
    \sigma_{k-1}(\widehat\kappa|\alpha)
    \geq0,
    \qquad
    1\leq\alpha\leq n+2.
\]
We claim that
\[
    T_\alpha\widehat\kappa_\alpha^2
    \leq
    \frac{n+2-k}{n+2}
    \sum_{\beta=1}^{n+2}
    T_\beta\widehat\kappa_\beta^2.
\]
There is nothing to prove if $T_\alpha=0$. Otherwise, set
\[
    b=T_\alpha,
    \qquad
    a=\sigma_k(\widehat\kappa|\alpha),
    \qquad
    e=\sigma_{k+1}(\widehat\kappa|\alpha).
\]
Since
\[
    a+\widehat\kappa_\alpha b=0,
\]
the elementary symmetric identities give
\[
    T_\alpha\widehat\kappa_\alpha^2
    =
    \frac{a^2}{b},
\]
and
\[
    \sum_\beta T_\beta\widehat\kappa_\beta^2
    =
    -(k+1)\sigma_{k+1}(\widehat\kappa)
    =
    (k+1)
    \left(
        \frac{a^2}{b}-e
    \right).
\]
Newton's inequality for the remaining $n+1$ entries yields
\[
    (k+1)e
    \leq
    \frac{k(n+1-k)}{n+2-k}
    \frac{a^2}{b},
\]
which is immediate when $e\leq0$. Therefore
\[
    \sum_\beta T_\beta\widehat\kappa_\beta^2
    \geq
    \frac{n+2}{n+2-k}\frac{a^2}{b},
\]
and the claim follows.

For the original $n$ entries,
\[
    T_i
    =
    \sigma_{k-1}(\kappa|i)
    -
    s^2\sigma_{k-3}(\kappa|i)
    =
    2s\sigma_{k-2}(\kappa)F_i.
\]
The two additional entries contribute
\[
    s^2(T_{n+1}+T_{n+2})
    =
    2s^2\sigma_{k-1}(\kappa).
\]
Consequently, with
\[
    \mathcal E=\sum_jF_j\kappa_j^2,
\]
we obtain
\[
    F_i\kappa_i^2
    \leq
    \frac{n+2-k}{n+2}
    \left(
        \mathcal E
        +
        s\frac{\sigma_{k-1}}{\sigma_{k-2}}
    \right).
\]
Rearranging gives
\[
    \mathcal E
    \leq
    \frac{n+2}{k}
    \sum_{j\neq i}F_j\kappa_j^2
    +
    \frac{n+2-k}{k}
    s\frac{\sigma_{k-1}}{\sigma_{k-2}}.
\]

Finally, Newton's inequalities imply
\[
    s^2\sigma_{k-3}
    =
    \frac{\sigma_k\sigma_{k-3}}{\sigma_{k-2}}
    \leq
    \frac{(n-k+1)(k-2)}
         {k(n-k+3)}
    \sigma_{k-1}.
\]
Hence
\[
\begin{aligned}
    \sum_jF_j
    &=
    \frac{
        (n-k+1)\sigma_{k-1}
        -
        (n-k+3)s^2\sigma_{k-3}
    }{
        2s\sigma_{k-2}
    }
    \\
    &\geq
    \frac{n-k+1}{ks}
    \frac{\sigma_{k-1}}{\sigma_{k-2}}.
\end{aligned}
\]
Substituting this estimate into the preceding inequality proves the lemma.
\end{proof}

\subsection{The comparison Dirichlet problem}
\label{sec:comparison-existence}

For prescribed curvature quotient equations, the homogeneous
Dirichlet problem was studied by Lin and Trudinger \cite{LT},
and general boundary values were treated by Ivochkina, Lin, and
Trudinger \cite{ILT}. We use a finite normal-limit argument for
the boundary double-normal estimate; see \cite{IvoNormal}.

We prove the small-ball solvability result needed for the comparison
construction. The admissible radius depends only on $n,k$, and $K$.
Estimates used solely to construct the comparison solution may depend
on higher norms of the fixed boundary data.

\begin{proposition}\label{prop:comparison-existence}
Let $3\leq k<n$ and $K\geq1$, and set
\[
    R_*=
    \min\left\{
        1,
        \frac14
        \sqrt{
            \frac{\binom{n-1}{k}}
                 {\binom{n-1}{k-2}}
        },
        \frac{1}{2(1+K^2)}
        \sqrt{
            \frac{\binom{n-1}{k-1}}
                 {\binom{n-1}{k-3}}
        }
    \right\}>0.
\]
Suppose that $0<R\leq R_*$ and
$u\in C^\infty(\overline B_R)$ satisfies
\[
    \kappa[u]\in\Gamma_k,
    \qquad
    F(\kappa[u])=1
    \quad\text{in }B_R,
    \qquad
    |Du|\leq K.
\]
Then, for every $s\in(0,1)$ and $t\in[s,1]$, the Dirichlet problem
\begin{equation}\label{5.1}
    \begin{cases}
        F(\kappa[v_t])=t,\quad
        \kappa[v_t]\in\Gamma_k
            & \text{in }B_R,\\
        v_t=u
            & \text{on }\partial B_R
    \end{cases}
\end{equation}
has a unique solution
$v_t\in C^\infty(\overline B_R)$.
Moreover, $v_t\geq u$, and, for fixed $u,R,s$, the global derivative
bounds are uniform for $t\in[s,1]$. The constants in this existence
statement may depend on higher norms of $u$; they are not used in
the interior curvature estimate of Theorem~\ref{thm:main}.
\end{proposition}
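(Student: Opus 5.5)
The plan is to use the continuity method in $t$, starting from $t=1$, where $v_1=u$ is a solution. Since $F$ is elliptic and concave on $\Gamma_k$, the linearized operator at an admissible solution is uniformly elliptic on $\overline B_R$ with zero-order term absent. The implicit function theorem in $C^{4,\alpha}$ therefore gives openness of the set of $t\in[s,1]$ for which \eqref{5.1} is solvable. Uniqueness and $v_t\geq u$ follow from the comparison principle, because $F(\kappa[v_t])=t\leq1=F(\kappa[u])$ with equal boundary values; the same comparison also gives $v_t\leq v_{t'}$ for $t\leq t'$. Closedness reduces to uniform $C^2(\overline B_R)$ bounds for $t\in[s,1]$. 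Once these are available, Evans--Krylov and Schauder theory give $C^{2,\alpha}$ and higher bounds, and hence smoothness.

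The $C^0$ bound is immediate: $u\leq v_t\leq\max_{\partial B_R}u$, by comparison and positivity of mean curvature. For the boundary gradient, $u$ is a lower barrier. For the upper barrier I would use a function $\bar v$ equal to $u$ on $\partial B_R$ whose graph has $\sigma_1$-type or $F$-type curvature bounded above by $s$. The natural candidate is $\bar v=u+A(R^2-|x|^2)$ modified, or better $\bar v=\max_{\partial B}u$ minus a spherical-cap correction. The role of the explicit radius $R_*$ is presumably here: on a small ball with $|Du|\leq K$ one can build a barrier whose curvatures lie outside $\Gamma_k$ or have $F<s$ by the binomial ratio constraints, since the binomial expressions $\binom{n-1}{k}/\binom{n-1}{k-2}$ are exactly the values of $\sigma_k/\sigma_{k-2}$ on the $(n-1)$-cylinder directions. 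The global gradient bound then follows from the boundary gradient bound by a standard maximum principle for $W$, using \eqref{2.1} ($\Delta_F W\geq0$).

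For second derivatives, the global interior-to-boundary reduction uses the usual test function $\log\kappa_1$ plus a function of $W$ and $|X|^2$, together with the concavity terms and \eqref{4.1}-type inequalities. This gives $\max\kappa_1\leq C(1+\max_{\partial}\kappa_1)$ with constants depending on $C^1$ norms and $t\geq s$. On the boundary, the tangential-tangential derivatives come from $v_t=u$; the tangential-normal ones come from the standard barrier built from the linearized operator applied to the angular derivatives $x_i\partial_j-x_j\partial_i$ (rotations preserve the ball) combined with $v_t-u$ and a quadratic, where uniform positivity of $\sum F_j$ controls the error terms.

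The main obstacle is the double normal estimate, and here I would follow Ivochkina's finite normal-limit idea. At a boundary point the equation, with all other second derivatives bounded, is a one-variable relation in $h_{nn}$. Writing $\sigma_k=h_{nn}\sigma_{k-1}(\kappa')+\sigma_k(\kappa')$ (and similarly for $\sigma_{k-2}$) in terms of the tangential block $\kappa'$, we see that $h_{nn}\to\infty$ would force $t^2=\sigma_{k-1}(\kappa')/\sigma_{k-3}(\kappa')$ in the limit. Lemma~\ref{energy} lets the full energy be controlled by the tangential part, so that the limit can be taken quantitatively. The contradiction then comes from showing $\sigma_{k-1}(\kappa')/\sigma_{k-3}(\kappa')\geq$ a bound exceeding $1\geq t^2$ along the boundary. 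On a small sphere of radius $R\leq R_*$, the tangential curvatures of $u|_{\partial B_R}$ are dominated by the sphere's curvature $\sim1/R$ (corrected by the factor $(1+K^2)$ from the graph metric). The third term in $R_*$ gives precisely $\sigma_{k-1}/\sigma_{k-3}\geq 4>1$ on the tangential block, yielding a uniform upper bound on $h_{nn}$; the lower bound follows from admissibility. I expect making this tangential comparison precise, with the graph-metric distortion, to be the delicate step.
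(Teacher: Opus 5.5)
The gap is in the double normal estimate. Your reduction is correct: once the tangential and mixed derivatives are bounded, the rank-one expansion bounds $v_{\mu\mu}$ as soon as $\sigma_{k-1}(A')-t^2\sigma_{k-3}(A')\geq c>0$ holds uniformly on $\partial B_R$, where $A'$ is the tangential block of the shape operator. The mechanism you propose for this lower bound, however, fails.

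At $y\in\partial B_R$, with $\phi=u|_{\partial B_R}$ extended radially, the tangential Hessian of $v$ is $D^2_\partial\phi+\frac{v_\mu}{R}I_{n-1}$. So, up to metric factors, the tangential block is $B_y(v_\mu)=C_y^{1/2}\bigl(D^2_\partial\phi+\frac{v_\mu}{R}I_{n-1}\bigr)C_y^{1/2}$. The sphere curvature $1/R$ enters only multiplied by the unknown normal derivative $v_\mu$, which has no sign. You only know $v_\mu\leq u_\mu$ and $|v_\mu|\leq K_0$, and for a tilted $u$ (e.g.\ a small piece of a round sphere away from its lowest point) one has $u_\mu<0$ on part of $\partial B_R$. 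Meanwhile $D^2_\partial\phi$ is arbitrary data. So the tangential curvatures are not dominated by $1/R$. Indeed, since $v\geq u$ with equal boundary values, the tangential Hessian of $v$ equals that of $u$ plus $\frac{v_\mu-u_\mu}{R}I_{n-1}\leq0$, and a priori nothing keeps its quotient away from $t$.

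The bound $\sigma_{k-1}/\sigma_{k-3}\geq4$ that the third term of $R_*$ actually yields concerns $C_y/R$, the coefficient of the parameter $a$ in $B_y(a)$. For the concave function $h_y(a)=\mathcal F(B_y(a))-tW_y(a)$ this gives the slope bound $h_y'\geq1$. It says nothing about the value $h_y(v_\mu)$, which strict ellipticity makes positive but with no uniform rate.

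The missing idea is Trudinger's minimum-point argument, in Ivochkina's finite-normal-limit form, which produces a uniform gap:
\begin{itemize}
\item Let $\bar a(y)$ be the unique zero of $h_y$. Let $y_0$ minimize $v_\mu-\bar a$ over $\partial B_R$, with minimum value $\varepsilon_*>0$.
\item Run a barrier argument on $w=v_\mu-\bar a-\varepsilon_*-\varepsilon|Dv-p_0|^2$, where $p_0=D_\partial\phi+(\bar a+\varepsilon_*)\mu$. This $w$ is nonnegative on $\partial B_R$ and vanishes at $y_0$, so the argument bounds $v_{\mu\mu}(y_0)$.
\item Uniform ellipticity then gives $h_{y_0}(v_\mu(y_0))\geq c_0$, and concavity of $h_{y_0}$ turns this into $\varepsilon_*\geq c_1$.
\item Finally $h_y'\geq1$ propagates $h_y(v_\mu)\geq c_1$ to all of $\partial B_R$.
\end{itemize}

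Several secondary points are also off.
\begin{itemize}
\item Lemma~\ref{energy} plays no role in the normal limit. The paper uses it in the mixed-derivative barrier, where the penalty $\varepsilon|PD(v-\phi)|^2$ is only tangential. There it absorbs, via $\operatorname{tr}(MQ^2)\leq C\operatorname{tr}(MQPQ)+C\operatorname{tr}M$, the terms produced by the unbounded drift $-\frac{2}{W^2}MQp$ of the linearization. Your appeal to positivity of $\sum F_j$ does not control that drift.
\item A barrier built on $v_t-u$ cannot be uniform as $t\to1$, since $v_1=u$ and $u$ is a subsolution only with margin $1-t$. The paper instead uses $V=d-\frac N2d^2$. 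The second term of $R_*$ enters there, to absorb $tp_\mu/W^2$ in $Ld$. It does not enter the gradient barrier, which is just the concave function $u+C_b(R^2-|x|^2)$.
\item Comparison gives $v_t\geq v_{t'}$ for $t\leq t'$, not the reverse.
\end{itemize}
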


\begin{proof}
Fix $s\in(0,1)$. We establish a priori estimates for smooth
admissible solutions of \eqref{5.1}, uniformly in $t\in[s,1]$.
Write
\[
    v=v_t,
    \qquad
    \phi=u|_{\partial B_R},
\]
and set
\[
\begin{gathered}
    p=Dv,
    \qquad
    Q=D^2v,
    \qquad
    W=\sqrt{1+|p|^2},
    \qquad
    \gamma=(I_n+p\otimes p)^{-\frac12},
    \\
    A=\frac1W\gamma Q\gamma,
    \qquad
    G(p,Q)
    =
    F\!\left(
        \frac1W\gamma Q\gamma
    \right),
    \qquad
    M=(M^{ij}),
    \quad
    M^{ij}
    =
    \frac{\partial G}{\partial Q_{ij}}.
\end{gathered}
\]
The matrix $A$ is a symmetric representation of the shape operator.
In this proof, constants may depend on $n,k,R,s$ and fixed smooth
norms of $u$, but not on $t$.

\medskip
\noindent\textit{Step 1. Height and gradient estimates.}

Choose $C_b>0$, depending on
$\|D^2u\|_{L^\infty}$, such that
\[
    \overline v
    =
    u+C_b(R^2-|x|^2),
    \qquad
    D^2\overline v<0.
\]
Comparison gives $u\leq v$. If
$v-\overline v$ had a positive interior maximum, then
\[
    D^2v
    \leq
    D^2\overline v<0
\]
there, contradicting admissibility. Thus
\begin{equation}\label{5.2}
    u\leq v\leq\overline v
    \qquad
    \text{in }B_R.
\end{equation}

Let $\mu$ denote the outward unit normal to $\partial B_R$.
Since all three functions agree on the boundary,
\[
    \overline v_\mu
    \leq
    v_\mu
    \leq
    u_\mu,
    \qquad
    D_\partial v
    =
    D_\partial\phi
    \quad\text{on }\partial B_R.
\]
This bounds the boundary gradient.

The vertical angle
\[
    q=W^{-1}
\]
satisfies
\[
    F^{ij}\nabla^2_{ij}q
    =
    -q\sum_iF_i\kappa_i^2
    \leq0.
\]
The minimum principle therefore bounds $|Dv|$ by its boundary maximum.
Denote the resulting global bound by $K_0$, and put
\[
    W_0=\sqrt{1+K_0^2}.
\]
Unlike the original $K$, the constant $K_0$ is allowed to depend
on higher norms of $u$.

\medskip
\noindent\textit{Step 2. Tangential control of the Hessian energy.}

In a principal orthonormal frame, write
\[
    f_i=F_i,
    \qquad
    E=\sum_i f_i\kappa_i^2,
    \qquad
    S=\sum_i f_i.
\]
Lemma~\ref{energy} gives
\begin{equation}\label{5.3}
    E
    \leq
    \frac{n+2}{k}
    \sum_{j\neq i}f_j\kappa_j^2
    +
    \frac{n+2-k}{n-k+1}t^2S,
    \qquad
    1\leq i\leq n.
\end{equation}

Let
\[
    F'(A)=(F^{ij}(A)).
\]
Since $F'(A)$ commutes with $A$,
\[
    M
    =
    \frac1W\gamma F'(A)\gamma,
    \qquad
    QMQ
    =
    W\gamma^{-1}F'(A)A^2\gamma^{-1},
\]
and hence
\[
    WE
    \leq
    \operatorname{tr}(MQ^2)
    \leq
    W^3E.
\]

For a unit vector $\omega\in\mathbb R^n$, set
\[
    P=I_n-\omega\otimes\omega,
    \qquad
    e=\frac{\gamma\omega}{|\gamma\omega|}.
\]
The matrix $\gamma^{-1}P\gamma^{-1}$ has kernel spanned by $e$,
and its nonzero eigenvalues are at least one. These coincide with
the eigenvalues of $P\gamma^{-2}P$ on $\omega^\perp$, where
\[
    P\gamma^{-2}P\geq P.
\]
Therefore
\[
\begin{aligned}
    \operatorname{tr}(MQPQ)
    &=
    W\operatorname{tr}
    \bigl(
        F'(A)A^2\gamma^{-1}P\gamma^{-1}
    \bigr)
    \\
    &\geq
    W\sum_i f_i\kappa_i^2(1-e_i^2)
    \\
    &\geq
    W\left(
        E-\max_i f_i\kappa_i^2
    \right).
\end{aligned}
\]
Applying \eqref{5.3} to an index realizing this maximum, and using
$t\leq1$ and $W\leq W_0$, we obtain
\begin{equation}\label{5.4}
    \operatorname{tr}(MQ^2)
    \leq
    C\operatorname{tr}(MQPQ)
    +
    C\operatorname{tr}M.
\end{equation}
Here we also used
\[
    S\leq W^3\operatorname{tr}M.
\]
Concavity and homogeneity give
\begin{equation}\label{5.5}
    \operatorname{tr}M
    \geq
    W^{-3}\operatorname{tr}F'(A)
    \geq
    W^{-3}F(I_n)
    \geq
    c(n,k,K_0)>0.
\end{equation}
Neither estimate uses a boundary second-derivative bound.

\medskip
\noindent\textit{Step 3. An auxiliary differential inequality.}

Differentiating the metric factors in $G$ gives
\[
    G_p
    =
    -\frac{t}{W^2}p
    -
    \frac{2}{W^2}MQp.
\]
Introduce the auxiliary operator
\begin{equation}\label{5.6}
    L
    =
    M^{ij}D_{ij}
    -
    \frac{t}{W^2}p\cdot D.
\end{equation}
Differentiation of
\[
    G(Dv,D^2v)=t
\]
then yields
\begin{equation}\label{5.7}
    Lp
    =
    \frac{2}{W^2}QMQp.
\end{equation}
Thus $L$ differs from the full linearization by the explicitly
separated term involving $MQp$.

Let $\xi,p_0$ be prescribed smooth vector fields, let $\lambda$ be a
prescribed smooth scalar function, and let $P$ be either $I_n$ or a
prescribed smooth orthogonal projection of rank $n-1$. For a fixed
$\varepsilon>0$, set
\[
    w
    =
    \xi\cdot p-\lambda
    -
    \varepsilon
    (p-p_0)^{\mathsf T}P(p-p_0).
\]
Assume that these fields and their derivatives through order two
are bounded. Writing
\[
    z
    =
    \xi-2\varepsilon P(p-p_0),
    \qquad
    Dw=Qz+c,
\]
the vector $c$ is bounded in terms of the prescribed fields and
$K_0$. Applying \eqref{5.7} and the product rule gives
\[
\begin{aligned}
    Lw
    &=
    \frac{2}{W^2}(Dw-c)^{\mathsf T}MQp
    -
    2\varepsilon\operatorname{tr}(MQPQ)
    +
    \mathcal R,
    \\
    |\mathcal R|
    &\leq
    C
    \left(
        \operatorname{tr}M
        +
        \sqrt{
            \operatorname{tr}M
            \operatorname{tr}(MQ^2)
        }
    \right).
\end{aligned}
\]
All terms involving second derivatives of $v$ in $\mathcal R$ are
linear in $Q$. Weighted Cauchy--Schwarz and Young's inequality
therefore give, for every $\varepsilon_0>0$,
\[
    Lw
    \leq
    -2\varepsilon\operatorname{tr}(MQPQ)
    +
    \varepsilon_0\operatorname{tr}(MQ^2)
    +
    C_{\varepsilon_0}
    \bigl(
        \operatorname{tr}M
        +
        M^{ij}w_iw_j
    \bigr).
\]
For $P=I_n$, choose $\varepsilon_0<\varepsilon$. For a rank-$n-1$
projection, choose $\varepsilon_0$ sufficiently small and apply
\eqref{5.4}. In either case,
\begin{equation}\label{5.8}
    Lw
    \leq
    C
    \bigl(
        \operatorname{tr}M
        +
        M^{ij}w_iw_j
    \bigr).
\end{equation}
Since $w$ is uniformly bounded, choosing a sufficiently large fixed
$b>0$ gives
\begin{equation}\label{5.9}
\begin{aligned}
    \widetilde w
    &=
    1-e^{-bw},
    \\
    L\widetilde w
    &=
    be^{-bw}
    \bigl(
        Lw-bM^{ij}w_iw_j
    \bigr)
    \leq
    C\operatorname{tr}M.
\end{aligned}
\end{equation}
In all applications below, the fields are constructed from fixed
boundary data and controlled scalar parameters, not from an
extension of the unknown full boundary gradient.

\medskip
\noindent\textit{Step 4. A boundary barrier.}

In a collar of $\partial B_R$, let
\[
    d=R-|x|,
    \qquad
    \mu=\frac{x}{|x|},
    \qquad
    P=I_n-\mu\otimes\mu.
\]
Here $d$ denotes the distance to the boundary of the base ball and
$\mu$ is the outward unit normal to $\partial B_R$.

Because $k<n$, the matrix
\[
    W^{-1}\gamma P\gamma
\]
belongs to $\Gamma_k$. Concavity and homogeneity give
\[
    M^{ij}P_{ij}
    \geq
    F(W^{-1}\gamma P\gamma).
\]
Its nonzero eigenvalues are $W^{-1}$ with multiplicity $n-2$ and
$\zeta W^{-1}$, where
\[
    \zeta
    =
    \frac{1+p_\mu^2}{W^2}
    \in(0,1].
\]
Set
\[
    c_b
    =
    \sqrt{
        \frac{\binom{n-1}{k}}
             {\binom{n-1}{k-2}}
    }>0.
\]
The elementary symmetric polynomials give
\[
    F(W^{-1}\gamma P\gamma)^2
    =
    \frac1{W^2}
    \frac{
        \binom{n-2}{k}
        +
        \zeta\binom{n-2}{k-1}
    }{
        \binom{n-2}{k-2}
        +
        \zeta\binom{n-2}{k-3}
    }
    \geq
    \frac{c_b^2\zeta}{W^2}.
\]
Indeed, after multiplication by the denominator, the difference
between the numerator and $c_b^2\zeta$ times the denominator is a
concave quadratic polynomial in $\zeta$, nonnegative at $0$ and
zero at $1$. Hence
\begin{equation}\label{5.10}
    M^{ij}P_{ij}
    \geq
    c_b
    \frac{\sqrt{1+p_\mu^2}}{W^2}.
\end{equation}

Since $R\leq\frac{c_b}{4}$ and $t\leq1$,
\[
    Ld
    =
    -\frac{M^{ij}P_{ij}}{|x|}
    +
    \frac{tp_\mu}{W^2}
    \leq
    -\frac{M^{ij}P_{ij}}{2|x|}.
\]
Fix $N\geq1$ and set
\[
    V
    =
    d-\frac N2d^2
\]
in a collar where
\[
    Nd\leq\frac12,
    \qquad
    d<\frac R2.
\]
Then
\begin{equation}\label{5.11}
\begin{aligned}
    LV
    &=
    (1-Nd)Ld
    -
    NM^{ij}\mu_i\mu_j
    \\
    &\leq
    -\frac1{4R}M^{ij}P_{ij}
    -
    NM^{ij}\mu_i\mu_j
    \\
    &\leq
    -c\operatorname{tr}M.
\end{aligned}
\end{equation}
Moreover, $V\geq0$, $V=0$ on $\partial B_R$, and $V>0$ on the
inner boundary of the collar.

\medskip
\noindent\textit{Step 5. Tangential and mixed second derivatives.}

Extend $\phi$ constantly along radial normals. For tangential
vectors $\tau,\tau'$ at a boundary point,
\[
    D^2v(\tau,\tau')
    =
    D_\partial^2\phi(\tau,\tau')
    +
    \frac{v_\mu}{R}
    \langle\tau,\tau'\rangle,
\]
so the pure tangential derivatives are bounded.

For the mixed derivatives, let $\xi$ be a fixed smooth vector field
tangent to the concentric spheres near
$y_0\in\partial B_R$, and set
\[
    w
    =
    \pm\xi\cdot D(v-\phi)
    -
    \varepsilon
    |PD(v-\phi)|^2.
\]
This has the form considered in Step 3. On the true boundary,
$w=0$ and the gradient of the square term vanishes.

In a fixed collar neighborhood of $y_0$, consider
\[
    \mathcal Q
    =
    \widetilde w
    +
    B|x-y_0|^2
    +
    C_0V.
\]
Choose $B$ so that $\mathcal Q\geq0$ on the artificial lateral
boundary, using the gradient bound. Then choose $C_0$ sufficiently
large so that the same inequality holds on the inner boundary and
$L\mathcal Q\leq0$ by \eqref{5.9} and \eqref{5.11}. The bounded
drift term in $L|x-y_0|^2$ is absorbed using \eqref{5.5}.
The minimum principle gives
\[
    \mathcal Q\geq0
\]
in the neighborhood.

Since $\mathcal Q(y_0)=0$ and
\[
    V_\mu(y_0)=-1,
\]
we obtain
\[
    0
    \geq
    (\mathcal Q)_\mu(y_0)
    =
    b\,w_\mu(y_0)-C_0.
\]
The two choices of sign give
\[
    |v_{\mu\xi}(y_0)|
    \leq
    C.
\]
All terms involving the prescribed fields are already bounded.

\medskip
\noindent\textit{Step 6. The finite normal limit.}

At $y\in\partial B_R$, write
\[
    p_T=D_\partial\phi
\]
and define
\[
\begin{gathered}
    C_y
    =
    (I_{n-1}+p_T\otimes p_T)^{-1},
    \\
    B_y(a)
    =
    C_y^{\frac12}
    \left(
        D_\partial^2\phi
        +
        \frac aR I_{n-1}
    \right)
    C_y^{\frac12},
    \qquad
    W_y(a)
    =
    \sqrt{1+|p_T|^2+a^2}.
\end{gathered}
\]
The scalar $a$ represents a candidate outward normal derivative.
Let
\[
    \mathcal F(B)
    =
    \left(
        \frac{\sigma_{k-1}(B)}
             {\sigma_{k-3}(B)}
    \right)^{\frac12},
    \qquad
    h_y(a)
    =
    \mathcal F(B_y(a))-tW_y(a).
\]
The interval on which
\[
    B_y(a)\in\Gamma_{k-1}
\]
has the form
\[
    (a_*(y),\infty).
\]
On this interval, $h_y$ is concave and
\[
    \lim_{a\to\infty}h_y'(a)
    =
    \mathcal F\left(\frac{C_y}{R}\right)-t.
\]
Since $|p_T|\leq K$,
\[
    \mathcal F\left(\frac{C_y}{R}\right)
    \geq
    \frac{1}{R(1+K^2)}
    \sqrt{
        \frac{\binom{n-1}{k-1}}
             {\binom{n-1}{k-3}}
    }
    \geq2.
\]
It follows that
\[
    h_y'(a)\geq1
\]
for every admissible $a$ and $t\in[s,1]$.

At the lower endpoint,
\[
    \lim_{a\downarrow a_*(y)}
    \mathcal F(B_y(a))
    =
    0,
\]
and therefore
\[
    \lim_{a\downarrow a_*(y)}
    h_y(a)
    =
    -tW_y(a_*(y))
    <
    0.
\]
On the other hand,
\[
    h_y(a)\to+\infty
    \qquad
    \text{as }a\to\infty.
\]
Hence there is a unique zero
\[
    \bar a(y,t)\in(a_*(y),\infty).
\]

Since
\[
    h_y'(\bar a)\geq1,
\]
the implicit function theorem gives smooth dependence of
$\bar a$ on $(y,t)$. This dependence is uniform on
$\partial B_R\times[s,1]$: the lower endpoint $a_*(y)$ is controlled
by the fixed tangential Hessian of $\phi$, the lower bound
$h_y'\geq1$ controls the zero, and at the zero
\[
    \mathcal F(B_y(\bar a))
    =
    tW_y(\bar a)
    \geq
    s.
\]
Newton--Maclaurin therefore keeps the bounded matrices
$B_y(\bar a)$ in a fixed compact subset of $\Gamma_{k-1}$.
All derivatives of $\bar a$ needed below are uniformly bounded.
We suppress $t$ from the notation.

Set
\[
    a_v=v_\mu(y).
\]
The tangential compression of the shape operator along the boundary
graph is represented by
\[
    \frac{B_y(a_v)}{W_y(a_v)}
\]
and belongs to $\Gamma_{k-1}$. Increasing $v_{\mu\mu}$ with all
other boundary data fixed produces a positive rank-one perturbation
of $A[v]$. Expansion of the elementary symmetric polynomials in that
entry gives
\begin{equation}\label{5.12}
    F_y^\infty(a_v)
    :=
    \lim_{\lambda\to\infty}
    G\bigl(
        Dv,
        D^2v+\lambda\mu\otimes\mu
    \bigr)
    =
    \frac{\mathcal F(B_y(a_v))}
         {W_y(a_v)}.
\end{equation}
Strict ellipticity gives
\[
    F_y^\infty(a_v)>t.
\]
Thus
\[
    h_y(a_v)>0,
    \qquad
    a_v>\bar a(y).
\]

\medskip
\noindent\textit{Step 7. A uniform normal-limit gap.}

We claim that there exists
$c_{\mathrm{rec}}>0$, independent of $t\in[s,1]$, such that
\begin{equation}\label{5.13}
    F_y^\infty(v_\mu(y))-t
    \geq
    c_{\mathrm{rec}},
    \qquad
    y\in\partial B_R.
\end{equation}

Set
\[
    \varepsilon_*
    :=
    \min_{\partial B_R}
    \bigl(
        v_\mu-\bar a
    \bigr)
    >0,
\]
and choose a minimizing point $y_0$. Extend $\bar a$ constantly
along radial normals and define
\[
    p_0
    =
    D_\partial\phi
    +
    (\bar a+\varepsilon_*)\mu,
\]
and
\[
    w
    =
    v_\mu
    -
    \bar a
    -
    \varepsilon_*
    -
    \varepsilon
    |Dv-p_0|^2.
\]
The parameter $\varepsilon_*$ is bounded, so the derivatives of
$p_0$ are uniformly controlled. On the true boundary, if
\[
    a
    =
    v_\mu
    -
    \bar a
    -
    \varepsilon_*
    \geq0,
\]
then
\[
    w
    =
    a-\varepsilon a^2
    \geq0
\]
for a sufficiently small fixed $\varepsilon>0$.
At $y_0$,
\[
    w=0,
    \qquad
    Dv=p_0.
\]

Apply Step 3 with $P=I_n$ and use the barrier from Step 5.
It follows that
\[
    w_\mu(y_0)\leq C.
\]
Since the prescribed normal extensions satisfy
\[
    D_\mu\mu=0,
    \qquad
    D_\mu\bar a=0,
\]
and the quadratic term has zero derivative at $y_0$, we obtain
\[
    v_{\mu\mu}(y_0)
    \leq
    C.
\]
The tangential and mixed estimates, together with positivity of the
mean curvature, give a lower bound as well. Hence the full boundary
Hessian is bounded at $y_0$.

Bounded admissible curvature matrices with $F\geq s$ remain in a
compact subset of $\Gamma_k$. Indeed, the Newton--Maclaurin inequalities give a positive lower bound
for $\sigma_k$, and hence prevent the curvature vector from
approaching $\partial\Gamma_k$ under the uniform curvature bound. Strict ellipticity is therefore
uniform on this compact set. Consequently, by uniform ellipticity along the positive rank-one
segment,
\[
\begin{aligned}
& G\bigl(p,Q+\mu\otimes\mu\bigr)-G(p,Q)
\\
&\qquad
=
\int_0^1
M^{ij}\bigl(p,Q+\tau\mu\otimes\mu\bigr)
\mu_i\mu_j\,d\tau
\geq c>0,
\end{aligned}
\]
where
\(
M^{ij}=\frac{\partial G}{\partial Q_{ij}}.
\)
 Passing to the normal limit gives
\[
    F_{y_0}^\infty
    \bigl(
        v_\mu(y_0)
    \bigr)
    -
    t
    \geq
    c_0>0.
\]
On \([\bar a(y_0),v_\mu(y_0)]\), concavity gives
\[
    h_{y_0}'
    \leq
    h_{y_0}'(\bar a(y_0))
    \leq
    C.
\]
Consequently,
\[
\begin{aligned}
    c_0
    &\leq
    W_{y_0}
    \bigl(
        v_\mu(y_0)
    \bigr)
    \left[
        F_{y_0}^\infty
        \bigl(
            v_\mu(y_0)
        \bigr)
        -
        t
    \right]
    \\
    &=
    h_{y_0}
    \bigl(
        v_\mu(y_0)
    \bigr)
    \\
    &\leq
    C
    \bigl(
        v_\mu(y_0)
        -
        \bar a(y_0)
    \bigr)=
    C\varepsilon_*.
\end{aligned}
\]
Hence
\[
    \varepsilon_*\geq c_1>0.
\]
For every boundary point, $h_y'\geq1$ and
\[
    W_y(v_\mu)\leq W_0
\]
give
\[
    F_y^\infty(v_\mu)-t
    =
    \frac{h_y(v_\mu)}
         {W_y(v_\mu)}
    \geq
    \frac{v_\mu-\bar a}{W_0}
    \geq
    \frac{c_1}{W_0}.
\]
Thus \eqref{5.13} holds with
\[
    c_{\mathrm{rec}}
    =
    \frac{c_1}{W_0}.
\]

We now use this gap to bound $v_{\mu\mu}$ on the whole boundary.
Write
\[
    Q
    =
    Q_0
    +
    v_{\mu\mu}\mu\otimes\mu,
\]
and
\[
    A
    =
    A_0
    +
    v_{\mu\mu}\zeta\otimes\zeta,
    \qquad
    A_0
    =
    W^{-1}\gamma Q_0\gamma,
    \qquad
    \zeta
    =
    W^{-\frac12}\gamma\mu.
\]
Here $Q_0$ and $A_0$ are bounded by the tangential and mixed
estimates. The compression $A_T$ of $A_0$ to $\zeta^\perp$ has
the same eigenvalues as
\[
    \frac{B_y(a_v)}{W_y(a_v)}.
\]
These matrices range in a compact subset of $\Gamma_{k-1}$.

The rank-one expansion of the equation gives
\[
    v_{\mu\mu}|\zeta|^2
    \left(
        \sigma_{k-1}(A_T)
        -
        t^2\sigma_{k-3}(A_T)
    \right)
    =
    t^2\sigma_{k-2}(A_0)
    -
    \sigma_k(A_0).
\]
The right-hand side is bounded, whereas the coefficient of
$v_{\mu\mu}$ is bounded below by a positive constant. Indeed,
\[
    |\zeta|^2
    \geq
    W_0^{-3},
\]
and
\[
\begin{aligned}
    \sigma_{k-1}(A_T)
    -
    t^2\sigma_{k-3}(A_T)
    &=
    \sigma_{k-3}(A_T)
    \left[
        \bigl(F_y^\infty(a_v)\bigr)^2
        -
        t^2
    \right]
    \\
    &\geq
    c>0.
\end{aligned}
\]
For $k=3$, the factor $\sigma_{k-3}$ is $\sigma_0=1$.
Thus
\[
    |v_{\mu\mu}|
    \leq
    C
    \qquad
    \text{on }\partial B_R,
\]
completing the boundary second-derivative estimate.

\medskip
\noindent\textit{Step 8. Global estimates and the continuity method.}

Let
\[
    q=W^{-1},
    \qquad
    \underline q=\frac{1}{2W_0},
\]
and consider
\[
    \log\kappa_1-\log(q-\underline q).
\]
If its maximum were attained in the interior, choose a unit principal
vector realizing $\kappa_1$ and extend it locally with vanishing first
covariant derivative and vanishing symmetrized second covariant
derivative at the contact point. The component $h_{11}$ touches
$\kappa_1$ from below, so the corresponding smooth quotient has a
local maximum there.

With
\[
    b=\log h_{11},
\]
the Simons identity and concavity give
\[
    \Delta_Fb
    \geq
    t\kappa_1
    -
    E
    -
    \sum_i f_i b_i^2.
\]
The angle identity gives
\[
    \Delta_F[-\log(q-\underline q)]
    =
    \frac{q}{q-\underline q}E
    +
    \sum_i f_i
    \frac{q_i^2}
         {(q-\underline q)^2}.
\]
At the maximum,
\[
    b_i
    =
    \frac{q_i}{q-\underline q},
\]
so the gradient terms cancel. Therefore
\[
    0
    \geq
    t\kappa_1
    +
    \frac{\underline q}{q-\underline q}E
    >
    0,
\]
a contradiction.

The maximum is therefore attained on the boundary. This argument
also covers repeated largest curvatures, since it differentiates a
touching component rather than the largest eigenvalue itself.
The boundary estimate bounds $\kappa_1$ globally.

Since
\[
    \Gamma_k\subset\Gamma_2,
\]
we have
\[
    |\kappa|^2
    =
    \sigma_1(\kappa)^2
    -
    2\sigma_2(\kappa)
    \leq
    \sigma_1(\kappa)^2
    \leq
    n^2\kappa_1^2.
\]
The gradient bound and
\[
    |D^2v|
    \leq
    W^3|\kappa[v]|
\]
then give a global $C^2$ estimate.

Use the path
\[
    t
    =
    1-\tau(1-s),
    \qquad
    0\leq\tau\leq1,
\]
starting from $v=u$. All the preceding estimates are uniform in
$\tau$. A bounded family in $\Gamma_k$ satisfying $F\geq s>0$
stays a positive distance away from $\partial\Gamma_k$, so the equations are uniformly
elliptic. The operator is smooth and concave in $Q$.

Global $C^{2,\alpha}$ estimates for uniformly elliptic concave
Dirichlet equations, followed by Schauder estimates, give the higher
bounds needed for closedness; see \cite{GT}. The full linearized
Dirichlet operator
\[
    M^{ij}D_{ij}
    +
    G_{p_i}D_i
\]
has no zeroth-order term and is invertible by the maximum principle
and linear Schauder theory. Thus the solution set is open and closed
along the path. This proves existence for every $t\in[s,1]$.
Uniqueness follows from the admissible comparison principle.
\end{proof}

\begin{remark}
The construction does not require a positive lower bound for the
product of the eigenvalues of the linearization on a fixed level set.
In particular, condition (1.10) of \cite{LiraCruz} cannot be used for
the present quotient without further restrictions.

For $3\leq k<n$, set
\[
    \lambda(L)
    =
    (L,1,\ldots,1).
\]
Then
\[
    F(\lambda(L))^2
    =
    \frac{
        \binom{n-1}{k}
        +
        L\binom{n-1}{k-1}
    }{
        \binom{n-1}{k-2}
        +
        L\binom{n-1}{k-3}
    }
    \longrightarrow
    \frac{
        \binom{n-1}{k-1}
    }{
        \binom{n-1}{k-3}
    }
    >0.
\]
Differentiation shows that
\[
    F_1(\lambda(L))
\]
is of order $L^{-2}$, whereas
\[
    F_i(\lambda(L))
    =
    O(1),
    \qquad
    i>1.
\]
Rescaling $\lambda(L)$ to any fixed positive $F$-level leaves these
derivatives unchanged, because $DF$ is homogeneous of degree zero.
Hence
\[
    \prod_{i=1}^nF_i(\lambda(L))
    \longrightarrow
    0.
\]
\end{remark}

\subsection{Proof of the main theorem}

\begin{proof}[Proof of Theorem~\ref{thm:main}]
We first derive the local curvature estimate on a ball for which the
admissible comparison solution $v$ in \eqref{3.1} has been constructed.
Write $R_0=\frac{3R}{4}$. The comparison principle gives
\[
    v>u
    \qquad\text{in }B_{R_0}.
\]
Moreover, since
\[
    \kappa[v]\in\Gamma_k\subset\Gamma_1,
\]
the mean curvature of $\Sigma_v$ is positive. Hence the maximum
principle gives
\[
    v
    \leq
    \max_{\partial B_{R_0}}v
    =
    \max_{\partial B_{R_0}}u.
\]
Therefore
\[
    u<v\leq\max_{\partial B_{R_0}}u
    \qquad\text{in }B_{R_0}.
\]
In particular,
\[
    \|v\|_{L^\infty(B_{R_0})}\leq K,
    \qquad
    \operatorname{osc}_{B_{R_0}}v\leq2K.
\]

Lemma~\ref{gradient} supplies the interior gradient bounds required
in Theorem~\ref{thm:separation}. By \eqref{3.24}, after decreasing
the separation constant if necessary, there exists
\[
    c_0=c_0(n,k,R,K)\in(0,1]
\]
such that
\[
    \rho:=v-u\geq c_0
    \qquad\text{in }B_{\frac R2}.
\]

We next obtain a boundary decay estimate for $\rho$, using the
concave barrier from \cite{QiuYan}. For $y\in\partial B_{R_0}$, set
\[
    w_y(x)
    =
    u(y)+K\sqrt{2(R_0^2-x\cdot y)},
    \qquad
    x\in\overline B_{R_0}.
\]
For $z\in\partial B_{R_0}$,
\[
    w_y(z)
    =
    u(y)+K|z-y|
    \geq
    u(z)
    =
    v(z),
\]
where we used the boundary condition $v=u$ on $\partial B_{R_0}$.
Moreover, $w_y$ is smooth in $B_{R_0}$ and
\[
    D^2w_y(x)
    =
    -\frac{K}
    {\bigl[2(R_0^2-x\cdot y)\bigr]^{\frac32}}
    y\otimes y
    \leq0.
\]
If $v-w_y$ had a positive maximum, it would occur at an interior
point, where
\[
    D^2v\leq D^2w_y\leq0.
\]
Hence the second fundamental form of $\Sigma_v$ would be
nonpositive there, and in particular
\[
    H[v]\leq0,
\]
contradicting $H[v]>0$. Thus
\[
    v\leq w_y
    \qquad\text{in }B_{R_0}.
\]

For $x\in B_{R_0}$, write
\[
    d_\partial(x)
    =
    \operatorname{dist}(x,\partial B_{R_0})
    =
    R_0-|x|,
\]
and choose a nearest boundary point $y$. Then
\[
    |x-y|=d_\partial(x),
\]
and
\[
    R_0^2-x\cdot y
    =
    R_0d_\partial(x).
\]
Consequently,
\[
\begin{aligned}
    0\leq\rho(x)
    &\leq
    w_y(x)-u(x)
    \\
    &\leq
    K\sqrt{2R_0d_\partial(x)}
    +
    Kd_\partial(x)\leq C_0K\,d_\partial(x)^{\frac12},
\end{aligned}
\]
where $C_0=(\sqrt2+1)\sqrt{R_0}$.

Fix $\delta_1\in\left(\frac{c_0}{4},\frac{c_0}{2}\right)$, and let $\Omega_{\delta_1}$ be the connected component of $\{x\in B_{R_0}:\rho(x)>\delta_1\}$ containing $B_{\frac R2}$. The preceding boundary estimate implies
\[
    \operatorname{dist}
    \bigl(
        \overline{\Omega}_{\delta_1},
        \partial B_{R_0}
    \bigr)
    \geq
    d_0,
\]
where $ d_0=\left( \frac{\delta_1}{C_0K}\right)^2>0$. Since $\delta_1>\frac{c_0}{4}$, the constant $d_0$ has a positive lower bound depending only on $n,k,R,K$. Thus
\[
    \Omega_{\delta_1}\Subset B_{R_0},
\]
and
\[
    \rho=\delta_1
    \qquad
    \text{on }\partial\Omega_{\delta_1}.
\]

Applying Lemma~\ref{gradient} on the balls
\[
    B_{\frac{d_0}{2}}(x),
    \qquad
    x\in\overline{\Omega}_{\delta_1},
\]
gives
\[
    \|Dv\|_{L^\infty(\Omega_{\delta_1})}
    \leq
    C(n,k,R,K).
\]
The bound is uniform because $d_0$ is bounded below in terms of the
same data.

Set
\[
    v_{\delta_1}=v-\delta_1.
\]
Vertical translation preserves the principal curvatures, so
$v_{\delta_1}$ is admissible and
\[
\begin{gathered}
    v_{\delta_1}-u
    =
    \rho-\delta_1
    >
    0
    \quad\text{in }\Omega_{\delta_1},
    \qquad
    v_{\delta_1}=u
    \quad\text{on }\partial\Omega_{\delta_1},
    \\
    \|u\|_{C^1(\Omega_{\delta_1})}
    +
    \|v_{\delta_1}\|_{C^1(\Omega_{\delta_1})}
    \leq
    K_1(n,k,R,K).
\end{gathered}
\]
No boundary regularity of $\Omega_{\delta_1}$ is required here,
since the proof of Proposition~\ref{pog} uses only an interior
maximum argument and $u,v_{\delta_1}$ are smooth in a neighborhood
of $\overline{\Omega}_{\delta_1}$.

Applying Proposition~\ref{pog}, we obtain
\[
    \sup_{\Omega_{\delta_1}}
    (\rho-\delta_1)^\beta
    \kappa_1[u]
    \leq
    C,
\]
where $\beta$ and $C$ depend only on $n,k,R,K$. Since
\[
    \rho-\delta_1
    \geq
    \frac{c_0}{2}
    \qquad
    \text{on }B_{\frac R2},
\]
it follows that
\[
    \sup_{B_{\frac R2}}
    \kappa_1[u]
    \leq
    C
    \left(
        \frac{2}{c_0}
    \right)^\beta
    \leq
    C(n,k,R,K).
\]

To bound the remaining principal curvatures, use
\[
    H=\sum_{j=1}^n\kappa_j>0
\]
and $\kappa_j\leq\kappa_1$. For each $1\leq i\leq n$,
\[
\begin{aligned}
    \kappa_i
    &=
    H-\sum_{j\neq i}\kappa_j
    \\
    &>
    -\sum_{j\neq i}\kappa_j\geq -(n-1)\kappa_1.
\end{aligned}
\]
Hence all principal curvatures are bounded on $B_{\frac R2}$.

In graph coordinates, let
\[
    h_u=(h^u_{ij}),
    \qquad
    g_u=I_n+Du\otimes Du.
\]
Then
\[
    D^2u=W_uh_u,
    \qquad
    \|g_u\|_{\mathrm{op}}\leq W_u^2,
\]
and the symmetric matrix
\[
    g_u^{-\frac12}h_ug_u^{-\frac12}
\]
has eigenvalues $\kappa[u]$. Therefore
\[
\begin{aligned}
    |D^2u|
    &=
    W_u|h_u|
    \\
    &\leq
    W_u\|g_u\|_{\mathrm{op}}
    \left|
        g_u^{-\frac12}h_ug_u^{-\frac12}
    \right|
    \\
    &\leq
    W_u^3|\kappa[u]|
    \leq
    (1+K^2)^{\frac32}
    |\kappa[u]|.
\end{aligned}
\]
Here $|\cdot|$ denotes the Euclidean norm for vectors and the
Frobenius norm for matrices. This proves \eqref{1.3} whenever the
comparison graph is available on $B_{R_0}$.

It remains to construct comparison graphs on balls of a uniform
local radius. If $u$ is assumed only to belong to $C^4(B_R)$, its
admissibility makes the linearized equation uniformly elliptic on
every compact subdomain for this fixed solution. Since the operator
is smooth, differentiation of the equation and interior Schauder
estimates imply
\[
    u\in C^\infty_{\mathrm{loc}}(B_R).
\]
This is only a qualitative regularity statement; no bound obtained
from this argument is used in the curvature estimate.

Fix $x_0\in B_{\frac R2}$ and set
\[
    r_0
    =
    \min\left\{
        \frac R8,
        \frac{R_*(n,k,K)}2
    \right\},
\]
where $R_*$ is given by
Proposition~\ref{prop:comparison-existence}. Since
\[
    \overline B_{\frac{4r_0}{3}}(x_0)
    \Subset
    B_R,
\]
the restriction of $u$ is smooth in a neighborhood of this closed
ball and supplies the boundary data required by that proposition.
We therefore obtain an admissible comparison solution on $B_{r_0}(x_0)$
with right-hand side $s_0$ and boundary values $u$.

Apply the preceding argument after translation, with the outer
radius $R$ replaced by $\frac{4r_0}{3}$. Its comparison radius is then $r_0$, and its target ball is $B_{\frac{2r_0}{3}}(x_0)$.

Consequently,
\[
    \sup_{B_{\frac{2r_0}{3}}(x_0)}
    \bigl(
        |\kappa[u]|+|D^2u|
    \bigr)
    \leq
    C(n,k,r_0,K).
\]

Higher norms of the boundary data may enter the construction of
each comparison solution. However, the separation and Pogorelov
estimates above use only $n,k,r_0,K$. Since $r_0$ depends only on
$n,k,R,K$ and is independent of $x_0$, evaluating this estimate at
each $x_0\in B_{\frac R2}$ proves \eqref{1.3}.
\end{proof}

\begin{remark}
The augmented vector
\[
    \left(
        \frac{\kappa[v]}{s_0},
        1
    \right)
\]
used in the doubling argument is an algebraic construction; no
hypersurface in $\mathbb R^{n+2}$ is introduced in the proof.
In particular, appending a component equal to $1$ should not be
identified with adjoining a quadratic graph variable.

Indeed, let
\[
    \widetilde u(x,t)
    =
    u(x)+\frac{t^2}{2}.
\]
At $t=0$, the induced metric and second fundamental form of this
graph, with the same downward-normal convention, are
\[
    \widetilde g(x,0)
    =
    \begin{pmatrix}
        g_u(x)&0\\
        0&1
    \end{pmatrix},
    \qquad
    \widetilde h(x,0)
    =
    \begin{pmatrix}
        h_u(x)&0\\
        0&W_u(x)^{-1}
    \end{pmatrix}.
\]
Thus, up to reordering,
\[
    \kappa[\widetilde u](x,0)
    =
    \left(
        \kappa[u](x),
        \frac{1}{W_u(x)}
    \right),
\]
so the additional principal curvature is $W_u(x)^{-1}$ rather than
$1$ in general.
\end{remark}

\section*{Declarations}
\textbf{Funding.} This work was supported by the ``University of Science and
Technology of China--Xinjiang Normal University Cooperative Development
Joint Fund'' (Project No.~XJNULH2501) and the National Natural Science
Foundation of China (Regional Science Foundation Project, Grant
No.~12661041).

\textbf{Use of artificial intelligence.} The authors used ChatGPT (OpenAI)
in the development and checking of some algebraic and differential-geometric
calculations and proof steps, and for language editing. All arguments,
calculations, and references in the final manuscript were independently
verified by the authors, who take full responsibility for the content.

\textbf{Conflict of interest.} The authors declare that they have no conflict of interest.

\begingroup
\small
\raggedright

\endgroup
\printauthorinformation
\end{document}